\DeclareMathAlphabet{\mathpzc}{OT1}{pzc}{m}{it}
\numberwithin{equation}{section}
\begin{document}

\theoremstyle{plain}
\newtheorem{theorem}{Theorem}[section]
\newtheorem{lemma}[theorem]{Lemma}
\newtheorem{proposition}[theorem]{Proposition}
\newtheorem{corollary}[theorem]{Corollary}
\newtheorem{definition}[theorem]{Definition}
\theoremstyle{definition}
\newtheorem{remark}[theorem]{Remark}
\newtheorem{example}[theorem]{Example}
\newtheorem{ass}[theorem]{Standing Assumption}

\crefname{lemma}{lemma}{lemmas}
\Crefname{lemma}{Lemma}{Lemmata}
\crefname{corollary}{corollary}{corollaries}
\Crefname{corollary}{Corollary}{Corollaries}
\crefname{ass}{standing Assumption}{standing Assumptions}
\Crefname{ass}{Standing Assumption}{Standing Assumptions}

\newcommand{\dd}{d}
\newcommand{\cadlag}{c\`adl\`ag }
\newcommand{\1}{\mathbb{I}}
\newcommand{\D}{\mathbb{D}} 
\newcommand{\N}{\mathbb{Z}_+}
\renewcommand{\a}{\mathfrak{A}}
\newcommand{\B}{\mathbb{B}}
\newcommand{\oB}{\overline{\B}}
\newcommand{\K}{\mathbb{K}}
\newcommand{\pp}{\partial^p}
\newcommand{\pl}{\partial^l}
\newcommand{\pt}{\partial^t}
\newcommand{\oK}{\overline{\mathbb{K}}}
\newcommand{\ok}{\overline{K}}
\renewcommand{\c}{\mathfrak{c}}
\newcommand{\C}{\mathsf{c}}
\newcommand{\A}{\mathbb{A}}
\newcommand{\bm}{P_\textup{BM}}
\newcommand{\he}{\widehat{\varepsilon}\hspace{0.05cm} }
\newcommand{\oc}{\xrightarrow{\ \omega\ }}
\newcommand{\osc}{\operatorname{osc}}
\newcommand{\cosc}{\overline{\textup{osc}}}
\renewcommand{\epsilon}{\varepsilon}
\newcommand{\dB}{B^\textup{dis}}
\newcommand{\dQ}{Q} 
\newcommand{\dO}{\mathcal{O}^\textup{dis}}
\expandafter\newcommand\csname r@tocindent4\endcsname{4in}
\setcounter{secnumdepth}{4}
\newtheorem*{convention}{Convention}
\newtheorem{claim}{Claim}
\newcommand*{\bigtimes}{\mathop{\raisebox{-.5ex}{\hbox{\huge{$\times$}}}}}
\renewcommand{\k}{\mathfrak{X}}
\newcommand\llambda{{\mathchoice
		{\lambda\mkern-4.5mu{\raisebox{.4ex}{\scriptsize$\backslash$}}}
		{\lambda\mkern-4.83mu{\raisebox{.4ex}{\scriptsize$\backslash$}}}
		{\lambda\mkern-4.5mu{\raisebox{.2ex}{\footnotesize$\scriptscriptstyle\backslash$}}}
		{\lambda\mkern-5.0mu{\raisebox{.2ex}{\tiny$\scriptscriptstyle\backslash$}}}}}
	\newcommand{\q}{\mathbb{Q}}
	\newcommand{\E}{E}

\title[A PHI for Balanced RWRE]{A Parabolic Harnack Principle for balanced difference equations in random Environments.} 
\author[N. Berger]{Noam Berger}
\author[D. Criens]{David Criens}

\address{Technical University of Munich, Center for Mathematics, Germany}
\email{noam.berger@tum.de}
\email{david.criens@tum.de, david.criens@stochastik.uni-freiburg.de}

\keywords{\vspace{1ex} Parabolic Harnack inequality, random walks in random environment, quantitative stochastic homogenization, degenerate random difference equations}

\subjclass[2010]{60K37,	35B27, 35K65, 47B80, 60G42, 60J10}

\date{\today}
\maketitle

\frenchspacing
\pagestyle{myheadings}

\begin{abstract}
We consider difference equations in balanced, i.i.d. environments which are not necessary elliptic. In this setting we prove a parabolic Harnack inequality (PHI) 
for non-negative solutions to the discrete heat equation satisfying a (rather mild) growth condition, and we identify the optimal Harnack constant for the PHI.
We show by way of an example that a growth condition is necessary and that our growth condition is sharp.

Along the way we also prove a parabolic oscillation inequality and a (weak) quantitative homogenization result, which we believe to be of independent interest. 
\end{abstract}

\section{Introduction}
\subsection{Background}
Consider the non-divergence form operator
\begin{align} \label{eq: N-D-F-O}
(L^a f) (x) \triangleq \sum_{i, j = 1}^d a_{ij} (x) \frac{d^2f }{dx_i d x_j} (x), \quad (f, x) \in C^2(\mathbb{R}^d) \times \mathbb{R}^d,
\end{align}
where \(a = (a_{ij})_{i, j = 1}^d\) is a measurable function from \(\mathbb{R}^d\) into the set of symmetric positive definite matrices which is uniformly elliptic, i.e. there is a constant \(0 < \lambda \leq 1\) such that 
\[
\lambda \|y\|^2 \leq \sum_{i, j = 1}^d a_{ij} (x) y_i y_j \leq \frac{1}{\lambda} \|y\|^2, \quad (x, y) \in \mathbb{R}^{d} \times \mathbb{R}^d.
\]
For an open domain $D$ in $\mathbb R^{d + 1}$ a function $u \colon D \to \mathbb R$ is called \emph{caloric} if it solves the (backward) heat equation \(\frac{d}{dt} u = - L^a u\).
In a seminal paper, Krylov and Safonov \cite{Krylov81} proved a \emph{parabolic Harnack inequality (PHI)} for non-negative caloric functions from \([0, R^2] \times B_R (0)\) to~$\mathbb R$.
 
 More precisely, they proved the existence of a positive constant \(C = C(\lambda)\) such that for any radius \(R > 0\) and every (non-negative) caloric function \(u\) on \([0, R^2] \times B_R (0)\) it holds that
\begin{align} \tag{\textup{PHI}}
\max_{Q_+} u \leq C \min_{Q_-} u, 
\end{align}
where \(Q_- \triangleq [0, \frac{1}{4} R^2]\times B_{R/2} (0)\) and \(Q_+ \triangleq [\frac{1}{2} R^2, \frac{3}{4} R^2] \times B_{R/2}(0)\).
The PHI has many important applications such as a priori estimates in parabolic H\"older spaces (see \cite{Krylov81}) or H\"older regularity results (see \cite{Safarov80}).
PHIs for discrete uniformly elliptic heat equations can be found in \cite{10.1215/S0012-7094-98-09122-0,10.1214/009117905000000440}, see also \cite{doi:10.1112/plms/s3-63.3.552} for its elliptic (i.e. time independent) counterpart. A version for uniformly elliptic equations with time-dependent coefficients is given in \cite{DG2019}.

Remarkably, the constant in the PHI of Krylov and Safonov does not depend on the regularity of the coefficient \(a\) but only on the ellipticity constant \(\lambda\). It is not hard to see that as \(\lambda\) goes to zero, the constant goes to infinity, and in particular the proof method of \cite{Krylov81} is not helpful in settings that are not uniformly elliptic.

More recently, there is a growing interest in PHIs for settings which are not necessarily uniformly elliptic.
We mention the paper \cite{hambly2009}, where heat equations arising from random walks on percolation clusters (RWPC) are studied, and the articles \cite{andres16,bella20,10.2969/jmsj/06741413}, where the PHI is proved for equations related to the random conductance model (RCM). In these works the PHI was used to prove a local limit theorem for the corresponding stochastic processes.
 
In contrast to \eqref{eq: N-D-F-O}, the equations associated to RWPC and RCM are in divergence form, i.e. reversible.
Discrete equations in non-divergence form appear in the context of \emph{random walks in balanced random environment (RWBRE)}. An \emph{elliptic Harnack inequality (EHI)} for such equations in fully non-elliptic environments has recently been proved in \cite{Berger18}. To the best of our knowledge, this result is the first of its kind for such a degenerate framework.

\subsection{Purpose of the current article}

Our main result is \Cref{theo: PHI} below, which is  a PHI for random difference equations associated to non-elliptic random walks in balanced i.i.d. random environments, which is the setting from \cite{Berger18}.
More precisely, we prove the PHI for all non-negative caloric functions which satisfy a certain exponential growth condition (see discussion in \Cref{sec:growth} below) and we show by example that it can fail without it. As the EHI holds in full generality, our result points to an interesting difference between parabolic and elliptic frameworks. To the best of our knowledge, a comparable phenomenon has not been reported before.
The Harnack constant in our PHI is optimal in the sense that it can be taken arbitrarily close to its counterpart in the PHI of the limiting Brownian motion from the corresponding invariance principle proven in \cite{Berger2014}.

 \subsection{The growth condition}\label{sec:growth}
As alluded to above, in \Cref{theo: PHI} the PHI is only proven for non-negative caloric functions $f\colon B_R(0)\times[0,R^2] \to \mathbb R$ satisfying the growth condition
\eqref{eq: growth cond}, which roughly states that
\begin{equation*}
	\max f \leq e^{R^{2 - \xi}} \min f,
\end{equation*}
 for $\xi$ arbitrarily small. We also find a counter example to the PHI satisfying 
 \begin{equation*}
	\max f  = e^{R^{2}} \min f,
\end{equation*}
showing that the growth condition \eqref{eq: growth cond} is sharp. We wish to make a few remarks regarding this growth condition:

\begin{remark}
Our growth condition is quite mild. In particular, in most applications (e.g. for local limit theorems) all functions that are considered are such that the maximum to minimum ratio grows like a power of $R$, which easily satisfies our growth condition.
\end{remark}

\begin{remark}
To the best of our knowledge, our paper is the first time that a PHI is proven under such a growth condition. We believe however that this phenomenon, namely that a mild growth condition guarantees an otherwise false PHI, exists in a large variety of models which are not uniformly elliptic. In particular, we believe that for random conductances models which are elliptic but not uniformly elliptic, and where the conductances have a thick enough tail around zero (see, e.g. \cite{BBHK}), a similar phenomenon can hold.
\end{remark}

 \subsection{Proof strategy}

 We now comment on the proof of our PHI. The basic strategy is borrowed from Fabes and Stroock \cite{Fabes1989} and their proof for the continuous uniformly elliptic case.
 In general, our Fabes--Stroock argument relies on two central ingredients which are of independent interest: A parabolic oscillation inequality and a parabolic quantitative homogenization estimate.  The former is used for the iterative scheme in the Fabes--Stroock argument and the latter yields estimates for the exit measure of the random walker, which we use roughly the same way Fabes and Stroock \cite{Fabes1989} used heat kernel estimates.
 In contrast to the setting of Fabes and Stroock, our model lacks connectivity in the sense that the movement of the random walker is restricted by holes in the environment.
 In addition, we have to deal with local degeneracies, as the positive transition probabilities in the random environments might not be bounded away from zero. 
 To control the sizes of the holes in the environments we use percolation estimates which use the i.i.d. structure.
 Due to our parabolic setting the speed of the random walker is a major issue. The growth condition ensures that the random walker reaches certain parts of the environments fast enough. 
 The Fabes--Stroock method was also used in \cite{Berger18} to establish the EHI. In contrast to our setting, the issue of speed plays no role in \cite{Berger18}, which also explains why the EHI holds in full generality.
 
\subsection{Possible future research directions}
Before we turn to the main body of this paper, let us comment on follow up questions which are left for future research.
It is interesting to compare our result to those for the RCM. In \cite{andres16,10.2969/jmsj/06741413} it was shown that the PHI holds under certain moment assumptions on the conductances, which are violated in degenerate cases. Our result suggests that also for the RCM the PHI might hold when restricted to a suitable class of functions. Conversely, the results from \cite{andres16,10.2969/jmsj/06741413} suggest that a full PHI might hold for elliptic RWBRE under suitable moment assumptions on the ellipticity constant. 
We think our PHI is a first step into the direction of a local limit theorem for non-elliptic RWBRE. At this point we stress that our PHI cannot be used directly to solve this question as in \cite{andres16}, because the method there relies on a PHI for adjoint equations. In the reversible (self-adjoint) framework from \cite{andres16} it is clear that the PHI also applies to adjoint equations, but in our non-symmetric setting this is not the case. 

The article is structured as follows. In \Cref{sec: 2} we introduce our setting and state our main results. The proofs are given in the remaining sections, whose structure is explained at the end of \Cref{sec: 2}.
\section{Framework and Main Results} \label{sec: 2}

\subsection{The Framework}
Let \(d \geq 2\) and let \(\{e_i \colon i = 1, \dots, d\}\) be the unit vectors in \(\mathbb{Z}^d\). We set \(e_{d + i} \triangleq - e_i\) for \(i = 1, \dots, d\) and define \(\mathcal{M}\) to be the space of all probability measures on \(\{e_i \colon i = 1, \dots, 2d\}\) endowed with the topology of convergence in distribution. Moreover, we define the product space
\[
\Omega \triangleq \prod_{\mathbb{Z}^d} \mathcal{M} 
\]
and its Borel \(\sigma\)-field \(\mathcal{F} \triangleq \mathcal{B}(\Omega)\). An element \(\omega\in \Omega\) is called \emph{environment}.
Let\(P\) be an i.i.d. Borel probability measure on \(\Omega\), i.e.
\[
P \triangleq \bigotimes_{\mathbb{Z}^d} \nu \text{ for some \(\nu\in \mathcal{M}\)}.
\]
We denote the space of all paths \(\N \to \mathbb{Z}^d\), equipped with the product topology, by \(\D\) and the coordinate process by \(X = (X_n)_{n \in \N}\), i.e. \(X_n (\alpha) = \alpha (n)\) for \((\alpha, n) \in \D \times \N\).
For every \(\omega \in \Omega\) and \(x \in \mathbb{Z}^d\) let \(P^x_\omega\) be the (unique) Borel probability measure on \(\D\) which turns \(X\) into a time-homogeneous Markov chain with initial value \(x\) and transition kernel \(\omega\), i.e. 
\[
P_\omega^x(X_0 = x) = 1, \quad P_\omega^x (X_n = y + e_k | X_{n - 1} = y) = \omega(y,e_k), \quad z \in \mathbb{Z}^d, k = 1, \dots, 2d.
\]
The coordinate process \(X\) is typically referred to as \emph{the walk} and the law \(P^x_\omega\) is called the \emph{quenched law of the walk}.
An environment \(\omega \in \Omega\) is called \emph{balanced} if for all \(z \in \mathbb{Z}^d\) and \(k = 1, \dots, d\)
\[
\omega(z, e_k) = \omega(z, - e_k).
\]
The set of balanced environments is denoted by \(\mathsf{B}\). 
For \(n \in \N\) we set \(\mathcal{F}_n \triangleq \sigma (X_m, m \in [n])\), where \([n] \triangleq \{0, \dots, n\}\).
In the following all terms such as \emph{martingale, stopping time}, etc., refer to \((\mathcal{F}_n)_{n \in \N}\) as filtration. 

\begin{remark}
The Markov property of the walk yields an intuitive characterization for balanced environments. Namely, \(X\) is a \(P_\omega^x\)-martingale for  all \(x  \in \mathbb{Z}^d\) if and only if \(\omega \in \mathsf{B}\). 
\end{remark}

We say that \(\omega\in \Omega\) is \emph{genuinely \(d\)-dimensional} if for every \(k = 1, \dots, 2d\) there exists a \(z \in \mathbb{Z}^d\) such that 
\(
\omega (z, e_k) > 0.
\)
We denote the set of all genuinely \(d\)-dimensional environments by \(\mathsf{G}\).

\begin{example}\label{ex: gdd}
	An example for an environment measure \(P\) with \(P(\mathsf{B} \cap \mathsf{G}) = 1\) is the following:
	\[
	P \Big(\omega  \in \Omega \colon \omega (0, e_i) = \omega(0, - e_i) = \frac{1}{2}\Big) = \frac{1}{d}, \quad i = 1, \dots, d.
	\]
	In this case the environment chooses uniformly at random one of the \(\pm e_i\) directions, see \Cref{fig: ex}.
	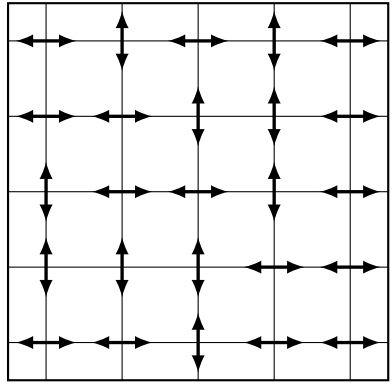
\begin{SCfigure}\label{fig: ex} 
		\renewcommand\thefigure{A}
		\caption{An illustration of \Cref{ex: gdd} restricted to a small box.}
		\begin{tikzpicture}
		\draw[thick] (0,0) rectangle (5,5);
		\draw (0, 0.5) -- (5, 0.5);
		\draw (0, 1.5) -- (5, 1.5);
		\draw (0, 2.5) -- (5, 2.5);
		\draw (0, 3.5) -- (5, 3.5);
		\draw (0, 4.5) -- (5, 4.5);
		\draw (0.5, 0) -- (0.5, 5);
		\draw (1.5, 0) -- (1.5, 5);
		\draw (2.5, 0) -- (2.5, 5);
		\draw (3.5, 0) -- (3.5, 5);
		\draw (4.5, 0) -- (4.5, 5);
		\draw[line width=1.25pt, latex-latex] (0.1, 0.5) -- (0.9, 0.5);
		\draw[line width=1.25pt, latex-latex] (0.1, 3.5) -- (0.9, 3.5);
		\draw[line width=1.25pt, latex-latex] (0.1, 4.5) -- (0.9, 4.5);
		\draw[line width=1.25pt, latex-latex] (1.1, 0.5) -- (1.9, 0.5);
		\draw[line width=1.25pt, latex-latex] (1.1, 2.5) -- (1.9, 2.5);
		\draw[line width=1.25pt, latex-latex] (1.1, 3.5) -- (1.9, 3.5);
		\draw[line width=1.25pt, latex-latex] (2.1, 2.5) -- (2.9, 2.5);
		\draw[line width=1.25pt, latex-latex] (2.1, 4.5) -- (2.9, 4.5);
		\draw[line width=1.25pt, latex-latex] (3.1, 1.5) -- (3.9, 1.5);
		\draw[line width=1.25pt, latex-latex] (3.1, 0.5) -- (3.9, 0.5);
		\draw[line width=1.25pt, latex-latex] (4.1, 0.5) -- (4.9, 0.5);
		\draw[line width=1.25pt, latex-latex] (4.1, 1.5) -- (4.9, 1.5);
		\draw[line width=1.25pt, latex-latex] (4.1, 2.5) -- (4.9, 2.5);
		\draw[line width=1.25pt, latex-latex] (4.1, 3.5) -- (4.9, 3.5);
		\draw[line width=1.25pt, latex-latex] (4.1, 4.5) -- (4.9, 4.5);
		\draw[line width=1.25pt, latex-latex] (0.5,1.1) -- (0.5, 1.9);
		\draw[line width=1.25pt, latex-latex] (0.5,2.1) -- (0.5, 2.9);
		\draw[line width=1.25pt, latex-latex] (1.5,1.1) -- (1.5, 1.9);
		\draw[line width=1.25pt, latex-latex] (1.5,4.1) -- (1.5, 4.9);
		\draw[line width=1.25pt, latex-latex] (2.5,1.1) -- (2.5, 1.9);
		\draw[line width=1.25pt, latex-latex] (2.5,0.1) -- (2.5, 0.9);
		\draw[line width=1.25pt, latex-latex] (2.5,3.1) -- (2.5, 3.9);
		\draw[line width=1.25pt, latex-latex] (3.5,2.1) -- (3.5, 2.9);
		\draw[line width=1.25pt, latex-latex] (3.5,3.1) -- (3.5, 3.9);
		\draw[line width=1.25pt, latex-latex] (3.5,4.1) -- (3.5, 4.9);
		\end{tikzpicture}
		
	\end{SCfigure}
\end{example}

For a finite set \(S \subset \mathbb{Z}^d\) and \(N \in \N\) we say that a function \(u \colon \overline{S} \times [N + 1] \to \mathbb{R}\) is \emph{\(\omega\)-caloric on \(S \times [N]\)} if for every \((x, m) \in S\times [N]\)
\[
u(x, m) = E^x_\omega \big[ u(X_1, 1 + m) \big] = \sum_{k = 1}^{2d} \omega(x, e_k) u(x + e_k, 1 + m).
\]
The following simple observation provides a probabilistic interpretation for the definition of a caloric function.
\begin{lemma}\label{lem: caloric mg}
	Let \(\omega \in \mathsf{B}\) and \(u \colon \overline{S} \times [N + 1] \to \mathbb{R}\). Set
	\[
	\tau_m \triangleq \inf (n \in \N \colon (X_n, n + m) \not \in S \times [N]), \quad m \in [N].
	\]
	The following are equivalent:
	\begin{enumerate}
		\item[\textup{(a)}]
		\(u\) is \(\omega\)-caloric.
		\item[\textup{(b)}]
		For all \((x, m) \in S\times [N]\) the process \((u(X_{n \wedge \tau_m}, n \wedge \tau_m + m))_{n \in \mathbb{Z}_+}\) is a \(P_\omega^x\)-martingale.
	\end{enumerate}
\end{lemma}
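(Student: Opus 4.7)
The plan is a straightforward case analysis on whether the stopping time $\tau_m$ has already fired, exactly as one proves the martingale-problem characterization of a Markov chain.

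For the direction (a) $\Rightarrow$ (b), I fix $(x,m) \in S \times [N]$ and consider the process $M_n \triangleq u(X_{n \wedge \tau_m}, (n \wedge \tau_m) + m)$. Since $u$ is bounded on the finite set $\overline{S} \times [N+1]$, integrability and adaptedness are immediate. To check the martingale property, I split the conditional expectation $E^x_\omega[M_{n+1} \mid \mathcal{F}_n]$ on the events $\{\tau_m \leq n\}$ and $\{\tau_m > n\}$. On $\{\tau_m \leq n\}$ the process is frozen, so $M_{n+1} = M_n$ trivially. On $\{\tau_m > n\}$ one has $(X_n, n+m) \in S \times [N]$, so by the Markov property under $P^x_\omega$ together with the caloric identity at $(X_n, n+m)$,
\[
E^x_\omega\bigl[u(X_{n+1}, n+1+m) \mid \mathcal{F}_n\bigr] = \sum_{k=1}^{2d} \omega(X_n, e_k)\, u(X_n + e_k, n+1+m) = u(X_n, n+m) = M_n.
\]

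For the converse direction (b) $\Rightarrow$ (a), I fix any $(x,m) \in S \times [N]$ and note that because $(x,m) \in S \times [N]$ we have $\tau_m \geq 1$ under $P^x_\omega$. Hence $M_1 = u(X_1, 1+m)$ and $M_0 = u(x,m)$, so the martingale identity $E^x_\omega[M_1] = M_0$ reads
\[
u(x,m) = E^x_\omega\bigl[u(X_1, 1+m)\bigr] = \sum_{k=1}^{2d} \omega(x, e_k)\, u(x + e_k, 1+m),
\]
which is exactly the caloric equation at $(x,m)$.

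There is no serious obstacle here; the only mild subtlety is handling the stopping time carefully in the forward implication, which is why I would explicitly split into the two events $\{\tau_m \leq n\}$ and $\{\tau_m > n\}$ rather than appeal to optional stopping as a black box. The standing assumption $\omega \in \mathsf{B}$ does not actually enter the argument, so I would either note that the equivalence holds for general $\omega$ or simply carry the hypothesis through for consistency with the rest of the paper.
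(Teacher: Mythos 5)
Your proof is correct and takes essentially the same route as the paper's: the forward direction splits on $\{\tau_m \leq n\}$ versus $\{\tau_m > n\}$ and applies the Markov property together with the caloric identity on the latter event, while the converse direction uses $P^x_\omega(\tau_m \geq 1) = 1$ and the constancy of martingale expectations to read off the caloric equation from $E^x_\omega[M_1] = M_0$. Your side remark that $\omega \in \mathsf{B}$ plays no role in the equivalence is also correct — the balancedness is used elsewhere in the paper (it makes $X$ itself a martingale), not in this lemma.
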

\begin{proof}
	The  implication  (b) \(\Rightarrow\) (a) follows from the fact that martingales have constant expectation and \(P^x_\omega\)-a.s. \(\tau_m \geq 1\). 
	For the converse implication, assume that (a) holds and let \(n \in \N\). The Markov property of the walk yields that \(P^x_\omega\)-a.s. on \(\{n < \tau_m\} = \{n + 1 \leq \tau_m\} \in \mathcal{F}_n\)
	\begin{align*}
	E_\omega^x \big[ u(X_{(n + 1) \wedge \tau_m}, (n + 1) \wedge \tau_m + m) \big| \mathcal{F}_n \big] &= E_\omega^x \big[ u(X_{n + 1}, n + 1 + m) \big| \mathcal{F}_n \big]
	\\&= E^{X_n}_\omega \big[ u(X_1, n + 1 + m) \big] 
	\\&= u(X_n, n + m).
	\end{align*}
	Since on \(\{\tau_n \leq n\}\) there is nothing to show,	we conclude that (b) holds. 
\end{proof}

Finally, let us end this section with technical notation: For \(x = (x_1, \dots, x_d) \in \mathbb{R}^d\) define the usual norms:
\begin{align*}
\|x\|_1 \triangleq \sum_{k = 1}^d |x_i|, \qquad \|x\|_2 \triangleq \Big(\sum_{k = 1}^d x^2_k\Big)^\frac{1}{2}, \qquad \|x\|_\infty \triangleq \max_{k = 1, \dots, d} |x_i|.
\end{align*}
For \(R > 0\) and \(y \in \mathbb{R}^d\) let
\begin{align*}
\B_R (y) &\triangleq \big\{x \in \mathbb{R}^d \colon \|x - y\|_2 < R\big\}, \quad B_R \triangleq \B_R \cap \mathbb{Z}^d.
\end{align*}
We also write \(\B_R \triangleq \B_R (0)\) and \(B_R \triangleq B_R (0)\). 
For a set \(G \subset \mathbb{Z}^d\), we define its  discrete boundary by
\[
\partial G \triangleq \big\{x \in \mathbb{Z}^d \backslash G \colon \exists y \in G, \|x - y\|_\infty = 1\big\}.
\]
Furthermore, we set 
\begin{align*}
O_R &\triangleq \big\{x \in B_R \colon \|x - y\|_\infty = 1 \Rightarrow y \in B_R\big\}.
\end{align*}
In case \(R > 1\), \(O_R\) is the biggest subset of \(B_R\) such that \(\overline{O}_R \triangleq O_R \cup \partial O_R = B_R\).
For a space-time point \(\hat{x} = (x, t) \in \mathbb{R}^d \times \mathbb{R}_+\) we define the continuous and discrete parabolic cylinder with radius \(R > 0\) and center \(\hat{x}\) by
\[
\K_R (\hat{x}) \triangleq \B_R (x) \times [t, t + R^2) \subset \mathbb{R}^d \times \mathbb{R}_+, \quad K_R (\hat{x}) \triangleq \K_R (\hat{x}) \cap (\mathbb{Z}^d \times \N).
\]
We also set \(\K_R\triangleq \K_R(0), K_R \triangleq K_R(0)\) and 
\begin{align*}
\pp \K_R&\triangleq \big(\partial \B_R \times (0, R^2] \big) \cup \big( \B_R \times \{R^2\}\big),\\
\pp K_R &\triangleq \big(\partial B_R \times [\lceil R^2\rceil] \big) \cup \big(B_R \times \{\lceil R^2 \rceil\}\big), 
\end{align*}
and 
\(
\oK_R \triangleq \K_R \cup \pp \K_R, \ok_R \triangleq K_R \cup \pp K_R.
\)
Here, \(\partial \B_R\) refers to the boundary of \(\B_R\) in \(\mathbb{R}^d\).
We also define
\begin{align*}
Q_R &\triangleq O_R \times [\lfloor R^2\rfloor- 1],\qquad
\pp Q_R \triangleq \big(\partial O_R \times [\lfloor R^2 \rfloor ] \big) \cup \big(O_R \times \{\lfloor R^2 \rfloor\}\big).
\end{align*}
Moreover, we set
\[
K^-_R \triangleq \big(\B_R \times (0, R^2)\big) \cap \big(\mathbb{Z}^d \times \N\big), \qquad K^+_R \triangleq \big(\B_R \times (2 R^2, 3 R^3)\big) \cap \big( \mathbb{Z}^d \times \N\big).
\]
To capture parities, we define 
\begin{align*}
\Theta^{o/e} (G) &\triangleq \big\{(x, t) \in G \colon \|x\|_1 + t  \text{ is odd/even}\big\}, \quad G \subseteq \mathbb{R}^d \times \N.
\end{align*}
\begin{convention}
	Without explicitly mentioning it, all constants might depend on the measure \(P\) and the dimension \(d\). Moreover, constants might change from line to line. We denote a generic positive constant by \(\c\).
\end{convention}

\subsection{Main Results}
Throughout this chapter, we impose the following:
\begin{ass}\label{SA: PHI}
	\(P(\mathsf{B} \cap \mathsf{G}) = 1\).
\end{ass}
We recall the invariance principle from \cite{Berger2014}:
\begin{theorem}\label{theo: IP} \textup{(\cite[Theorem 1.1]{Berger2014})}
	The quenched invariance principle holds with a deterministic diagonal covariance matrix \(\a\), that is for \(P\)-a.a. \(\omega \in \Omega\) as \(N \to \infty\)
	the law of the continuous \(\mathbb{R}^d\)-valued process
	\[
	\frac{1}{\sqrt{N}} X_{\lfloor tN\rfloor} + \frac{tN - \lfloor tN\rfloor}{\sqrt{N}} \big(X_{\lfloor tN\rfloor + 1} - X_{\lfloor tN\rfloor} \big), \quad t \in \mathbb{R}_+, 
	\]
	under \(P_\omega^x\) converges weakly (on \(C(\mathbb{R}_+, \mathbb{R}^d)\) endowed with the local uniform topology) to the law of a Brownian motion with covariance \(\a\) starting at \(x\).
\end{theorem}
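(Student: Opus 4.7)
The plan is to follow the martingale approach to invariance principles pioneered by Lawler for uniformly elliptic balanced random walks in random environments, suitably adapted to the non-elliptic framework. The starting point is the observation recorded in the remark after the definition of $\mathsf{B}$: under \Cref{SA: PHI}, the coordinate process $X$ is a $P_\omega^0$-martingale with increments bounded by $1$ and (by balancedness) predictable quadratic variation
\[
\langle X^{(i)} \rangle_n = \sum_{k=0}^{n-1} \omega(X_k, e_i), \qquad i = 1, \dots, d,
\]
with vanishing cross-variations between distinct coordinates. Boundedness of increments gives tightness of the diffusively rescaled interpolation in $C(\mathbb{R}_+, \mathbb{R}^d)$ via standard martingale tightness criteria (Aldous' condition is immediate). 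Hence it suffices to identify all finite-dimensional limits as those of a Brownian motion with a deterministic diagonal covariance $\mathfrak{A}$.

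For the identification step I would study the \emph{environment viewed from the particle}, $\bar{\omega}_n \triangleq \tau_{X_n}\omega$, which is a Markov chain on $\Omega$. The key analytic input is the existence of an invariant probability measure $Q$ for this chain which is absolutely continuous with respect to $P$, together with ergodicity of $Q$ under the induced dynamics. Granted such a $Q$, Birkhoff's ergodic theorem applied to the functional $\omega \mapsto \omega(0, e_i)$ yields
\[
\frac{1}{n}\langle X^{(i)}\rangle_n \longrightarrow \mathbb{E}^{Q}[\omega(0, e_i)] =: \mathfrak{A}_{ii}, \qquad P\text{-a.s.},
\]
and the martingale functional central limit theorem (e.g.\ Helland's or Rebolledo's version) then delivers convergence to a Brownian motion with diagonal covariance $\mathfrak{A}$, uniformly on compact time intervals.

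The main obstacle, and the heart of the matter, is the construction of the invariant measure $Q\ll P$ in the absence of uniform ellipticity. In the uniformly elliptic case this follows from the solvability of an adjoint equation plus a maximum principle, but here the positive transition probabilities are not bounded below and the environment has genuine holes. My plan would be to approximate $P$ from inside by an elliptic mollification, obtain invariant measures $Q_\varepsilon$ on finite boxes via a discrete Alexandrov--Bakelman--Pucci (ABP) type bound on the invariant density, and then pass to the limit. The ABP estimate is needed to control the $L^p$-norm of the density uniformly in $\varepsilon$ and in the box size, thereby preventing mass escape. Crucially, to propagate such estimates under degeneracy one combines the ABP bound with percolation estimates on the subset of $\mathbb{Z}^d$ where $\omega$ has non-trivial transition probabilities, exploiting the i.i.d.\ structure of $P$ and the genuine $d$-dimensionality from $\mathsf{G}$ to ensure the walker can traverse macroscopic regions. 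Ergodicity of $Q$ under lattice shifts follows from the ergodicity of $P$ together with the absolute continuity $Q \ll P$.

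In summary, after the soft tightness and martingale CLT reductions, the proof reduces to a delicate construction of an absolutely continuous invariant measure for the point of view of the particle, where one must delicately balance discrete ABP estimates against percolation bounds that replace the classical ellipticity hypothesis.
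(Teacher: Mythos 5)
This statement is not proved in the present paper: it is quoted verbatim from \cite[Theorem~1.1]{Berger2014}, and the present paper uses it as a black box. Your proposal should therefore be compared against the proof given there, and in fact it reproduces that proof's architecture quite faithfully: reduction to a martingale functional CLT via bounded increments and diagonal predictable quadratic variation, followed by identification of the limiting covariance through the environment viewed from the particle, with the technical core being the construction of an absolutely continuous invariant measure $Q$ for that chain in the absence of uniform ellipticity, via a discrete ABP-type estimate combined with percolation bounds that exploit the i.i.d.\ structure and genuine $d$-dimensionality. This is exactly the strategy of Berger and Deuschel; the only stylistic difference is that you propose approximating by elliptic mollifications of $P$, whereas the reference works with periodized environments on large tori, but both are standard ways of producing the finite-volume approximants to which the ABP bound is applied before passing to the limit.

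One point you should make explicit, because it is where a reader might reasonably object: to upgrade the Birkhoff-type convergence $\tfrac{1}{n}\langle X^{(i)}\rangle_n \to \mathfrak{A}_{ii}$ from $Q$-a.s.\ to $P$-a.s., absolute continuity $Q \ll P$ is not enough; one needs mutual absolute continuity $Q \sim P$ (or an ad hoc argument transferring the null set), and in the degenerate, genuinely $d$-dimensional setting this equivalence has to be established as part of the construction of $Q$ rather than taken for granted. The cited reference does prove the required equivalence, so this is a gap in exposition rather than in the mathematics, but a complete write-up would need to flag and close it. Apart from this, your proposal correctly identifies both the soft reductions and the genuinely hard step, and is aligned with the proof in \cite{Berger2014}.
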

For \(a \in (\sqrt{3}, 2]\), let \(H_a \in (0, \infty)\) be the following Harnack constant for Brownian motion: For every non-negative solution \(u\) to the (backward) heat equation
\[
\frac{du}{dt} + \frac{1}{2} \sum_{i, j = 1}^d \a_{ij} \frac{d^2 u}{dx_i dx_j} = 0
\]
in \(\K_{a R}\) it holds that
\[
\sup_{\B_R \times (2 R^2, 3 R^2)} u \leq H_a \inf_{\B_R \times (0, R^2)} u, 
\]
see \cite[Theorem 1]{doi:10.1002/cpa.3160170106}.
The following \emph{parabolic Harnack inequality (PHI)} is our main result.
\begin{theorem}\label{theo: PHI}
	Fix \(\varepsilon \in (0, 2 - \sqrt{3}), \xi \in (0, \frac{1}{5})\) and \(\mathfrak{w} > 1\).
	There are two constants \(R^*, \delta > 0\) such that for all \(R \geq R^*\) there exists a set \(G \in \mathcal{F}\) such that 
	\(
	P(G) \geq 1 - e^{- R^\delta}
	\)
	and for every \(\omega \in G\), \(p \in \{o, e\}\) and every non-negative \(\omega\)-caloric function \(u\) on \(\overline{K}_{2R}\) satisfying
	\begin{align}\label{eq: growth cond}
	\max_{\Theta^p (\ok_{2R})} u \leq \mathfrak{w}^{R^{2 - \xi}} \min_{\Theta^p (\ok_{2R})} u,
	\end{align}
	it holds that
	\begin{align}\label{eq: PHI}
	\max_{\Theta^p (K^+_R)} u \leq  \frac{(1 + 3 \varepsilon) H_{2 - \varepsilon}}{(1 -\varepsilon)^2} \min_{\Theta^p  (K^-_R)} u.
	\end{align}
\end{theorem}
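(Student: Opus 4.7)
The plan is to adapt the Fabes--Stroock strategy \cite{Fabes1989} to the degenerate discrete setting. The central idea is to use the quenched invariance principle (\Cref{theo: IP}) together with a quantitative homogenization estimate to transport the classical PHI for Brownian motion with covariance $\a$ and constant $H_{2-\varepsilon}$ back to the random walk on the discrete parabolic cylinder $\ok_{2R}$. The two workhorses in the argument are a \emph{parabolic oscillation inequality}, which provides geometric contraction when iterated, and a \emph{quantitative homogenization estimate} on exit measures from mesoscopic parabolic cylinders; the latter plays the role of the Gaussian heat kernel bounds in the classical Fabes--Stroock argument.

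First I would construct the good event $G$ as the intersection of three sub-events, each holding with $P$-probability at least $1 - e^{-R^{\delta'}}$ for some $\delta' > \delta$, so that the intersection has the required probability. The first is a homogenization event: on it, for every space-time point $\hat{x}$ in a sufficiently dense grid inside $K_{2R}$, the quenched exit measure of $X$ from a sub-cylinder $\K_r(\hat{x})$ with $r$ of mesoscopic order $R^{1-\eta}$ is super-polynomially close to the exit measure of Brownian motion with covariance $\a$. The second is a percolation event coming from the i.i.d. structure and genuine $d$-dimensionality $\mathsf{G}$: a Peierls-type argument rules out macroscopic holes that could disconnect the walk on any scale $\ge R^\beta$. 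The third is an oscillation event on which the parabolic oscillation inequality holds and feeds the Fabes--Stroock iteration.

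The core argument proceeds as follows. Pick any $(x_+, t_+) \in K^+_R$ and $(x_-, t_-) \in K^-_R$ of common parity $p$. By \Cref{lem: caloric mg} and optional stopping, both $u(x_\pm, t_\pm)$ can be written as the $P^{x_\pm}_\omega$-expectation of $u$ evaluated at the exit of the walk from a common enlarged parabolic cylinder $K_{(2-\varepsilon) R}$, which expresses both values as integrals of $u$ against quenched exit measures supported on the parabolic boundary. On $G$, these quenched exit measures are super-polynomially close to their Brownian analogues, and the classical Brownian PHI of \cite{doi:10.1002/cpa.3160170106} bounds the ratio of the corresponding Brownian integrals by $H_{2-\varepsilon}$. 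Combining these two estimates and iterating over overlapping cylinders (with the oscillation inequality providing contraction) yields $u(x_+, t_+) \le H_{2-\varepsilon}\, u(x_-, t_-) + (\text{error}) \cdot \max_{\ok_{2R}} u$, where the error is then absorbed into the prefactor $(1+3\varepsilon)/(1-\varepsilon)^2$ via the growth condition \eqref{eq: growth cond}. Parity is preserved automatically because each nearest-neighbour step preserves $\|X_n\|_1 + n$ modulo $2$, so the restriction to $\Theta^p$ in the spatial--temporal coordinate is compatible with every expectation that arises.

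The hard part is precisely this absorption step and the speed control it demands. For the homogenization error term to be dominated by a fixed multiple of $\min u$, that error must beat $\mathfrak{w}^{-R^{2-\xi}}$, which forces the quantitative homogenization rate to be sub-exponential in $R^2$ but super-polynomial in $R$. This is exactly the regime permitted by the restriction $\xi < 1/5$, with the threshold $1/5$ emerging from the interplay between the best $L^p$-type concentration available under the i.i.d. measure $P$ and the percolation estimates controlling the sizes of the holes; both must be tight enough to ensure that the walk, in time of order $R^2$, can actually sample a homogenized exit distribution despite local degeneracies. The remaining technicalities --- controlling the behaviour of $X$ near $\partial O_R$, handling the extra time spent skirting clusters of zero-probability transitions, and showing that the oscillation inequality survives the degenerate speed --- are precisely where the growth condition enters and where it becomes sharp, matching the counter-example described in \Cref{sec:growth}.
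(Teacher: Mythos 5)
Your high-level ingredient list (quantitative homogenization for exit measures, the parabolic oscillation inequality, a Fabes--Stroock iteration) matches the paper, and the remark about parity preservation is correct. But the \emph{mechanism} you propose for making the argument close is not what the paper does, and there are genuine gaps in it.

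First, the paper's argument is a proof by \emph{contradiction}, not a direct absorption. One supposes that there exist $\hat{x}^* \in \Theta^p(K^+_R)$, $\hat{y}^* \in \Theta^p(K^-_R)$ with $u(\hat{x}^*) \geq \frac{(1+3\varepsilon)H_{2-\varepsilon}}{(1-\varepsilon)^2}u(\hat{y}^*)$, and then the Fabes--Stroock iteration (driven by the homogenization estimate on exit measures and the oscillation inequality) is used to produce a point $\hat{x}$ where $u(\hat{x}) \gtrsim 2^{c R^{(1-\delta)/2}}u(\hat{y}^*)$ and, simultaneously, that every $R^\delta$-sub-cylinder contains a point $\hat{z}$ of the right parity with $u(\hat{z}) \lesssim u(\hat{y}^*)$. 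This doubling of $u$ along the iteration is the heart of Fabes--Stroock; your sketch instead claims the iteration directly delivers $u(x_+) \le H u(x_-) + (\text{error})\cdot\max u$, which is not how the argument runs and, as written, has no mechanism to control the additive error term.

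Second, and more seriously, you have no counterpart to the \emph{sink structure}, which is precisely what the paper uses to derive the contradiction. The walk in a (suitably regularized) environment $\widetilde{\omega}$ is almost surely absorbed into a unique sink $\mathcal{C}_{\widetilde{\omega}}$, and the proof shows (a) on the sink, $u$ is bounded by $u(\hat{y}^*)\kappa^{-2R^{2\delta}}$, and (b) the walk started at $\hat{x}$ fails to hit the sink before exiting $K_{2R}$ with probability at most $\mathfrak{w}^{-R^{2-\xi}}$. Only then does the caloric representation of $u(\hat{x})$ as a mean over $u(X_T,\cdot)$ produce a contradiction with the growth condition \eqref{eq: growth cond}: the off-sink contribution must be enormous, but it cannot exceed $\mathfrak{w}^{R^{2-\xi}}\min u$. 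You mention a percolation event that ``rules out macroscopic holes,'' which gestures at the right structure, but without the sink and the associated speed estimate you have no way to control $u$ off the good set, and the absorption step you describe cannot close.

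Third, your explanation of the threshold $\xi < 1/5$ is incorrect. It is not governed by ``the best $L^p$-type concentration'' or the homogenization rate. It comes from the interplay of two exponents inside the contradiction: the Fabes--Stroock iteration produces growth $\sim 2^{cR^{(1-\delta)/2}}$ over $\lfloor \sqrt{\varepsilon/(8\alpha)}R^{(1-\delta)/2}\rfloor$ steps, while the sink bound has a penalty $\kappa^{-2R^{2\delta}}$; one needs $(1-\delta)/2 > 2\delta$, i.e.\ $\delta < 1/5$, and $\xi$ must be taken strictly below $\delta$. The homogenization error itself only needs to be a small multiplicative $\varepsilon$ on exit probabilities, not anything close to $\mathfrak{w}^{-R^{2-\xi}}$, so that particular numerical matching in your proposal is a red herring.
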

	\begin{SCfigure} \label{fig: ex2}
	\begin{tikzpicture}
	\caption{An example for the necessity of the growth condition.}
	\draw[thick] (0,0) rectangle (7.5,7.5);
	\draw (0, 0.25) -- (7.5, 0.25);
	\draw (0, 0.75) -- (7.5, 0.75);
	\draw (0, 1.25) -- (7.5, 1.25);
	\draw (0, 1.75) -- (7.5, 1.75);
	\draw (0, 2.25) -- (7.5, 2.25);
	\draw (0, 2.75) -- (7.5, 2.75);
	\draw (0, 3.25) -- (7.5, 3.25);	
	\draw (0, 3.75) -- (7.5, 3.75);
	\draw (0, 4.25) -- (7.5, 4.25);
	\draw (0, 4.75) -- (7.5, 4.75);
	\draw (0, 5.25) -- (7.5, 5.25);
	\draw (0, 5.75) -- (7.5, 5.75);
	\draw (0, 6.25) -- (7.5, 6.25);
	\draw (0, 6.75) -- (7.5, 6.75);
	\draw (0, 7.25) -- (7.5, 7.25);
	\draw (0.25, 0) -- (0.25, 7.5);
	\draw (0.75, 0) -- (0.75, 7.5);
	\draw (1.25, 0) -- (1.25, 7.5);
	\draw (1.75, 0) -- (1.75, 7.5);
	\draw (2.25, 0) -- (2.25, 7.5);
	\draw (2.75, 0) -- (2.75, 7.5);
	\draw (3.25, 0) -- (3.25, 7.5);	
	\draw (3.75, 0) -- (3.75, 7.5);
	\draw (4.25, 0) -- (4.25, 7.5);
	\draw (4.75, 0) -- (4.75, 7.5);
	\draw (5.25, 0) -- (5.25, 7.5);
	\draw (5.75, 0) -- (5.75, 7.5);
	\draw (6.25, 0) -- (6.25, 7.5);
	\draw (6.75, 0) -- (6.75, 7.5);
	\draw (7.25, 0) -- (7.25, 7.5);
	
	
	\draw[red, line width=1.75pt] (0.25,0) -- (0.25, 2.25);
	\draw[red, line width=1.75pt] (0.25,3.75) -- (0.25, 5.25);
	\draw[red, line width=1.75pt] (0.75,0) -- (0.75, 1.75);
	\draw[red, line width=1.75pt] (0.75,2.25) -- (0.75, 4.25);
	\draw[red, line width=1.75pt] (0.75,5.25) -- (0.75, 7.5);		
	\draw[red, line width=1.75pt] (1.25,0.25) -- (1.25, 2.25);
	\draw[red, line width=1.75pt] (1.25,3.75) -- (1.25, 5.75);
	\draw[red, line width=1.75pt] (1.75,1.25) -- (1.75, 3.75);
	\draw[red, line width=1.75pt] (1.75,5.25) -- (1.75, 7.25);
	\draw[red, line width=1.75pt] (2.25,0) -- (2.25, 2.25);
	\draw[red, line width=1.75pt] (2.25,5.25) -- (2.25, 7.5);
	\draw[red, line width=1.75pt] (2.75,1.25) -- (2.75, 5.75);		
	\draw[red, line width=1.75pt] (3.25,1.25) -- (3.25, 4.25);
	\draw[red, line width=1.75pt] (3.75,0.25) -- (3.75, 2.75);
	\draw[red, line width=1.75pt] (3.75,3.75) -- (3.75, 5.25);
	\draw[red, line width=1.75pt] (3.75,5.75) -- (3.75, 6.75);
	\draw[red, line width=1.75pt] (4.25,0) -- (4.25, 1.75);
	\draw[red, line width=1.75pt] (4.25,2.75) -- (4.25, 6.25);
	\draw[red, line width=1.75pt] (4.75,1.25) -- (4.75, 2.25);
	\draw[red, line width=1.75pt] (4.75,3.75) -- (4.75, 7.5);
	\draw[red, line width=1.75pt] (5.25,0) -- (5.25, 2.75);
	\draw[red, line width=1.75pt] (5.25,5.75) -- (5.25, 7.5);
	\draw[red, line width=1.75pt] (5.75,2.75) -- (5.75, 7.25);
	\draw[red, line width=1.75pt] (6.25, 3.75) -- (6.25, 5.25);
	\draw[red, line width=1.75pt] (6.75,4.75) -- (6.75, 5.75);
	\draw[red, line width=1.75pt] (0.75,0.25) -- (4.25, 0.25);
	\draw[red, line width=1.75pt] (1.25,1.25) -- (3.75, 1.25);
	\draw[red, line width=1.75pt] (4.25,1.25) -- (5.25, 1.25);
	\draw[red, line width=1.75pt] (0.25,1.75) -- (1.25, 1.75);
	\draw[red, line width=1.75pt] (3.25,1.75) -- (4.75, 1.75);
	\draw[red, line width=1.75pt] (0,2.25) -- (2.75, 2.25);
	\draw[red, line width=1.75pt] (3.25,2.25) -- (4.75, 2.25);
	\draw[red, line width=1.75pt] (3.25,2.75) -- (7.5, 2.75);
	\draw[red, line width=1.75pt] (1.75,3.25) -- (3.25, 3.25);
	\draw[red, line width=1.75pt] (0,3.75) -- (2.75, 3.75);
	\draw[red, line width=1.75pt] (3.25,3.75) -- (7.5, 3.75);
	\draw[red, line width=1.75pt] (0.25,4.25) -- (1.25, 4.25);
	\draw[red, line width=1.75pt] (2.75,4.25) -- (3.75, 4.25);
	\draw[red, line width=1.75pt] (3.75,4.75) -- (7.5, 4.75);
	\draw[red, line width=1.75pt] (0,5.25) -- (6.75, 5.25);
	\draw[red, line width=1.75pt] (0.75,5.75) -- (1.75, 5.75);
	\draw[red, line width=1.75pt] (2.25,5.75) -- (4.25, 5.75);
	\draw[red, line width=1.75pt] (4.75,5.75) -- (7.5, 5.75);
	\draw[red, line width=1.75pt] (3.75,6.25) -- (5.25, 6.25);
	\draw[red, line width=1.75pt] (1.75,6.75) -- (5.75, 6.75);
	\draw[red, line width=1.75pt] (0.75,7.25) -- (2.25, 7.25);
	\draw[red, line width=1.75pt] (5.25,7.25) -- (7.5, 7.25);
	\draw[<->, line width=1pt] (0.05, 2.25) -- (0.45, 2.25);
	\draw[<->, line width=1pt] (0.05, 2.75) -- (0.45, 2.75);
	\draw[<->, line width=1pt] (0.05, 3.25) -- (0.45, 3.25);
	\draw[<->, line width=1pt] (0.05, 3.75) -- (0.45, 3.75);
	\draw[<->, line width=1pt] (0.05, 5.25) -- (0.45, 5.25);
	\draw[<->, line width=1pt] (0.05, 5.75) -- (0.45, 5.75);
	\draw[<->, line width=1pt] (0.05, 7.25) -- (0.45, 7.25);
	\draw[<->, line width=1pt] (0.55, 1.75) -- (0.95, 1.75);
	\draw[<->, line width=1pt] (0.55, 2.25) -- (0.95, 2.25);
	\draw[<->, line width=1pt] (0.55, 4.25) -- (0.95, 4.25);
	\draw[<->, line width=1pt] (0.55, 4.75) -- (0.95, 4.75);
	\draw[<->, line width=1pt] (0.55, 5.25) -- (0.95, 5.25);
	\draw[<->, line width=1pt] (1.05, 0.25) -- (1.45, 0.25);
	\draw[<->, line width=1pt] (1.05, 2.25) -- (1.45, 2.25);
	\draw[<->, line width=1pt] (1.05, 2.75) -- (1.45, 2.75);
	\draw[<->, line width=1pt] (1.05, 3.75) -- (1.45, 3.75);
	\draw[<->, line width=1pt] (1.05, 5.75) -- (1.45, 5.75);
	\draw[<->, line width=1pt] (1.05, 6.25) -- (1.45, 6.25);
	\draw[<->, line width=1pt] (1.05, 6.75) -- (1.45, 6.75);
	\draw[<->, line width=1pt] (1.05, 7.25) -- (1.45, 7.25);
	\draw[<->, line width=1pt] (1.55, 0.25) -- (1.95, 0.25);
	\draw[<->, line width=1pt] (1.55, 1.25) -- (1.95, 1.25);
	\draw[<->, line width=1pt] (1.55, 3.75) -- (1.95, 3.75);
	\draw[<->, line width=1pt] (1.55, 4.25) -- (1.95, 4.25);
	\draw[<->, line width=1pt] (1.55, 5.25) -- (1.95, 5.25);
	\draw[<->, line width=1pt] (1.55, 7.25) -- (1.95, 7.25);
	\draw[<->, line width=1pt] (2.05, 2.25) -- (2.45, 2.25);
	\draw[<->, line width=1pt] (2.05, 2.75) -- (2.45, 2.75);
	\draw[<->, line width=1pt] (2.05, 3.25) -- (2.45, 3.25);
	\draw[<->, line width=1pt] (2.05, 3.75) -- (2.45, 3.75);
	\draw[<->, line width=1pt] (2.05, 4.25) -- (2.45, 4.25);
	\draw[<->, line width=1pt] (2.05, 5.25) -- (2.45, 5.25);
	\draw[<->, line width=1pt] (2.05, 6.75) -- (2.45, 6.75);
	\draw[<->, line width=1pt] (2.55, 0.25) -- (2.95, 0.25);
	\draw[<->, line width=1pt] (2.55, 0.75) -- (2.95, 0.75);
	\draw[<->, line width=1pt] (2.55, 1.25) -- (2.95, 1.25);
	\draw[<->, line width=1pt] (2.55, 3.25) -- (2.95, 3.25);
	\draw[<->, line width=1pt] (2.55, 5.75) -- (2.95, 5.75);
	\draw[<->, line width=1pt] (2.55, 6.75) -- (2.95, 6.75);
	\draw[<->, line width=1pt] (3.05, 0.25) -- (3.45, 0.25);
	\draw[<->, line width=1pt] (3.05, 1.25) -- (3.45, 1.25);
	\draw[<->, line width=1pt] (3.05, 4.25) -- (3.45, 4.25);
	\draw[<->, line width=1pt] (3.05, 4.75) -- (3.45, 4.75);
	\draw[<->, line width=1pt] (3.05, 5.25) -- (3.45, 5.25);
	\draw[<->, line width=1pt] (3.05, 5.75) -- (3.45, 5.75);
	\draw[<->, line width=1pt] (3.05, 6.25) -- (3.45, 6.25);
	\draw[<->, line width=1pt] (3.05, 6.75) -- (3.45, 6.75);
	\draw[<->, line width=1pt] (3.05, 7.25) -- (3.45, 7.25);
	\draw[<->, line width=1pt] (3.55, 0.25) -- (3.95, 0.25);
	\draw[<->, line width=1pt] (3.55, 1.75) -- (3.95, 1.75);
	\draw[<->, line width=1pt] (3.55, 2.75) -- (3.95, 2.75);
	\draw[<->, line width=1pt] (3.55, 3.25) -- (3.95, 3.25);
	\draw[<->, line width=1pt] (3.55, 3.75) -- (3.95, 3.75);
	\draw[<->, line width=1pt] (3.55, 5.25) -- (3.95, 5.25);
	\draw[<->, line width=1pt] (3.55, 5.75) -- (3.95, 5.75);
	\draw[<->, line width=1pt] (3.55, 6.75) -- (3.95, 6.75);
	\draw[<->, line width=1pt] (4.05, 1.75) -- (4.45, 1.75);
	\draw[<->, line width=1pt] (4.05, 2.25) -- (4.45, 2.25);
	\draw[<->, line width=1pt] (4.05, 2.75) -- (4.45, 2.75);
	\draw[<->, line width=1pt] (4.05, 4.75) -- (4.45, 4.75);
	\draw[<->, line width=1pt] (4.05, 6.25) -- (4.45, 6.25);
	\draw[<->, line width=1pt] (4.05, 6.75) -- (4.45, 6.75);
	\draw[<->, line width=1pt] (4.55, 0.25) -- (4.95, 0.25);
	\draw[<->, line width=1pt] (4.55, 0.75) -- (4.95, 0.75);
	\draw[<->, line width=1pt] (4.55, 1.25) -- (4.95, 1.25);
	\draw[<->, line width=1pt] (4.55, 2.25) -- (4.95, 2.25);
	\draw[<->, line width=1pt] (4.55, 2.75) -- (4.95, 2.75);
	\draw[<->, line width=1pt] (4.55, 3.25) -- (4.95, 3.25);
	\draw[<->, line width=1pt] (4.55, 3.75) -- (4.95, 3.75);
	\draw[<->, line width=1pt] (4.55, 5.25) -- (4.95, 5.25);
	\draw[<->, line width=1pt] (4.55, 6.25) -- (4.95, 6.25);
	\draw[<->, line width=1pt] (5.05, 2.75) -- (5.45, 2.75);
	\draw[<->, line width=1pt] (5.05, 3.25) -- (5.45, 3.25);
	\draw[<->, line width=1pt] (5.05, 3.75) -- (5.45, 3.75);
	\draw[<->, line width=1pt] (5.05, 4.75) -- (5.45, 4.75);
	\draw[<->, line width=1pt] (5.05, 5.25) -- (5.45, 5.25);
	\draw[<->, line width=1pt] (5.05, 5.75) -- (5.45, 5.75);
	\draw[<->, line width=1pt] (5.05, 6.75) -- (5.45, 6.75);
	\draw[<->, line width=1pt] (5.55, 1.25) -- (5.95, 1.25);
	\draw[<->, line width=1pt] (5.55, 2.25) -- (5.95, 2.25);
	\draw[<->, line width=1pt] (5.55, 2.75) -- (5.95, 2.75);
	\draw[<->, line width=1pt] (5.55, 4.75) -- (5.95, 4.75);
	\draw[<->, line width=1pt] (5.55, 7.25) -- (5.95, 7.25);
	\draw[<->, line width=1pt] (6.05, 2.25) -- (6.45, 2.25);
	\draw[<->, line width=1pt] (6.05, 2.75) -- (6.45, 2.75);
	\draw[<->, line width=1pt] (6.05, 3.25) -- (6.45, 3.25);
	\draw[<->, line width=1pt] (6.05, 3.75) -- (6.45, 3.75);
	\draw[<->, line width=1pt] (6.05, 5.25) -- (6.45, 5.25);
	\draw[<->, line width=1pt] (6.05, 5.75) -- (6.45, 5.75);
	\draw[<->, line width=1pt] (6.05, 6.25) -- (6.45, 6.25);
	\draw[<->, line width=1pt] (6.05, 6.75) -- (6.45, 6.75);
	\draw[<->, line width=1pt] (6.05, 7.25) -- (6.45, 7.25);
	\draw[<->, line width=1pt] (6.55, 0.75) -- (6.95, 0.75);
	\draw[<->, line width=1pt] (6.55, 1.25) -- (6.95, 1.25);
	\draw[<->, line width=1pt] (6.55, 2.25) -- (6.95, 2.25);
	\draw[<->, line width=1pt] (6.55, 2.75) -- (6.95, 2.75);
	\draw[<->, line width=1pt] (6.55, 3.75) -- (6.95, 3.75);
	\draw[<->, line width=1pt] (6.55, 4.25) -- (6.95, 4.25);
	\draw[<->, line width=1pt] (6.55, 4.75) -- (6.95, 4.75);
	\draw[<->, line width=1pt] (6.55, 5.75) -- (6.95, 5.75);
	\draw[<->, line width=1pt] (6.55, 6.25) -- (6.95, 6.25);
	\draw[<->, line width=1pt] (6.55, 7.25) -- (6.95, 7.25);
	\draw[<->, line width=1pt] (7.05, 0.25) -- (7.45, 0.25);
	\draw[<->, line width=1pt] (7.05, 1.75) -- (7.45, 1.75);
	\draw[<->, line width=1pt] (7.05, 2.75) -- (7.45, 2.75);
	\draw[<->, line width=1pt] (7.05, 3.25) -- (7.45, 3.25);
	\draw[<->, line width=1pt] (7.05, 3.75) -- (7.45, 3.75);
	\draw[<->, line width=1pt] (7.05, 4.25) -- (7.45, 4.25);
	\draw[<->, line width=1pt] (7.05, 4.75) -- (7.45, 4.75);
	\draw[<->, line width=1pt] (7.05, 5.75) -- (7.45, 5.75);
	\draw[<->, line width=1pt] (7.05, 6.25) -- (7.45, 6.25);
	\draw[<->, line width=1pt] (7.05, 7.25) -- (7.45, 7.25);
	
	\draw[<->, line width=1pt] (0.25, 0.05) -- (0.25, 0.45);
	\draw[<->, line width=1pt] (0.25, 0.55) -- (0.25, 0.95);
	\draw[<->, line width=1pt] (0.25, 1.05) -- (0.25, 1.45);
	\draw[<->, line width=1pt] (0.25, 1.55) -- (0.25, 1.95);
	\draw[<->, line width=1pt] (0.25, 4.05) -- (0.25, 4.45);
	\draw[<->, line width=1pt] (0.25, 4.55) -- (0.25, 4.95);
	\draw[<->, line width=1pt] (0.25, 6.05) -- (0.25, 6.45);
	\draw[<->, line width=1pt] (0.25, 6.55) -- (0.25, 6.95);
	
	\draw[<->, line width=1pt] (0.75, 0.05) -- (0.75, 0.45);
	\draw[<->, line width=1pt] (0.75, 0.55) -- (0.75, 0.95);
	\draw[<->, line width=1pt] (0.75, 1.05) -- (0.75, 1.45);
	\draw[<->, line width=1pt] (0.75, 2.55) -- (0.75, 2.95);
	\draw[<->, line width=1pt] (0.75, 3.05) -- (0.75, 3.45);
	\draw[<->, line width=1pt] (0.75, 3.55) -- (0.75, 3.95);
	\draw[<->, line width=1pt] (0.75, 5.55) -- (0.75, 5.95);
	\draw[<->, line width=1pt] (0.75, 6.05) -- (0.75, 6.45);
	\draw[<->, line width=1pt] (0.75, 6.55) -- (0.75, 6.95);
	\draw[<->, line width=1pt] (0.75, 7.05) -- (0.75, 7.45);
	
	\draw[<->, line width=1pt] (1.25, 0.55) -- (1.25, 0.95);
	\draw[<->, line width=1pt] (1.25, 1.05) -- (1.25, 1.45);
	\draw[<->, line width=1pt] (1.25, 1.55) -- (1.25, 1.95);
	\draw[<->, line width=1pt] (1.25, 3.05) -- (1.25, 3.45);
	\draw[<->, line width=1pt] (1.25, 4.05) -- (1.25, 4.45);
	\draw[<->, line width=1pt] (1.25, 4.55) -- (1.25, 4.95);
	\draw[<->, line width=1pt] (1.25, 5.05) -- (1.25, 5.45);
	
	\draw[<->, line width=1pt] (1.75, 0.55) -- (1.75, 0.95);	
	\draw[<->, line width=1pt] (1.75, 1.55) -- (1.75, 1.95);	
	\draw[<->, line width=1pt] (1.75, 2.05) -- (1.75, 2.45);	
	\draw[<->, line width=1pt] (1.75, 2.55) -- (1.75, 2.95);	
	\draw[<->, line width=1pt] (1.75, 3.05) -- (1.75, 3.45);
	\draw[<->, line width=1pt] (1.75, 4.55) -- (1.75, 4.95);		
	\draw[<->, line width=1pt] (1.75, 5.55) -- (1.75, 5.95);		
	\draw[<->, line width=1pt] (1.75, 6.05) -- (1.75, 6.45);		
	\draw[<->, line width=1pt] (1.75, 6.55) -- (1.75, 6.95);		
	
	\draw[<->, line width=1pt] (2.25, 0.05) -- (2.25, 0.45);										
	\draw[<->, line width=1pt] (2.25, 0.55) -- (2.25, 0.95);		
	\draw[<->, line width=1pt] (2.25, 1.05) -- (2.25, 1.45);		
	\draw[<->, line width=1pt] (2.25, 1.55) -- (2.25, 1.95);												
	\draw[<->, line width=1pt] (2.25, 4.55) -- (2.25, 4.95);		
	\draw[<->, line width=1pt] (2.25, 5.55) -- (2.25, 5.95);			
	\draw[<->, line width=1pt] (2.25, 6.05) -- (2.25, 6.45);		
	\draw[<->, line width=1pt] (2.25, 7.05) -- (2.25, 7.45);

	\draw[<->, line width=1pt] (2.75, 1.55) -- (2.75, 1.95);
	\draw[<->, line width=1pt] (2.75, 2.05) -- (2.75, 2.45);
	\draw[<->, line width=1pt] (2.75, 2.55) -- (2.75, 2.95);
	\draw[<->, line width=1pt] (2.75, 3.55) -- (2.75, 3.95);
	\draw[<->, line width=1pt] (2.75, 4.05) -- (2.75, 4.45);
	\draw[<->, line width=1pt] (2.75, 4.55) -- (2.75, 4.95);
	\draw[<->, line width=1pt] (2.75, 5.05) -- (2.75, 5.45);
	\draw[<->, line width=1pt] (2.75, 6.05) -- (2.75, 6.45);
	\draw[<->, line width=1pt] (2.75, 7.05) -- (2.75, 7.45);
	
	\draw[<->, line width=1pt] (3.25, 0.55) -- (3.25, 0.95);
	\draw[<->, line width=1pt] (3.25, 1.55) -- (3.25, 1.95);
	\draw[<->, line width=1pt] (3.25, 2.05) -- (3.25, 2.45);
	\draw[<->, line width=1pt] (3.25, 2.55) -- (3.25, 2.95);
	\draw[<->, line width=1pt] (3.25, 3.05) -- (3.25, 3.45);
	\draw[<->, line width=1pt] (3.25, 3.55) -- (3.25, 3.95);
	
	\draw[<->, line width=1pt] (3.75, 0.55) -- (3.75, 0.95);
	\draw[<->, line width=1pt] (3.75, 1.05) -- (3.75, 1.45);
	\draw[<->, line width=1pt] (3.75, 2.05) -- (3.75, 2.45);
	\draw[<->, line width=1pt] (3.75, 4.05) -- (3.75, 4.45);
	\draw[<->, line width=1pt] (3.75, 4.55) -- (3.75, 4.95);
	\draw[<->, line width=1pt] (3.75, 6.05) -- (3.75, 6.45);
	\draw[<->, line width=1pt] (3.75, 7.05) -- (3.75, 7.45);
	
	\draw[<->, line width=1pt] (4.25, 0.05) -- (4.25, 0.45);
	\draw[<->, line width=1pt] (4.25, 0.55) -- (4.25, 0.95);
	\draw[<->, line width=1pt] (4.25, 1.05) -- (4.25, 1.45);
	\draw[<->, line width=1pt] (4.25, 3.05) -- (4.25, 3.45);
	\draw[<->, line width=1pt] (4.25, 3.55) -- (4.25, 3.95);
	\draw[<->, line width=1pt] (4.25, 4.05) -- (4.25, 4.45);
	\draw[<->, line width=1pt] (4.25, 5.05) -- (4.25, 5.45);
	\draw[<->, line width=1pt] (4.25, 5.55) -- (4.25, 5.95);
	\draw[<->, line width=1pt] (4.25, 7.05) -- (4.25, 7.45);
	
	\draw[<->, line width=1pt] (4.75, 1.55) -- (4.75, 1.95);
	\draw[<->, line width=1pt] (4.75, 4.05) -- (4.75, 4.45);
	\draw[<->, line width=1pt] (4.75, 4.55) -- (4.75, 4.95);
	\draw[<->, line width=1pt] (4.75, 5.55) -- (4.75, 5.95);
	\draw[<->, line width=1pt] (4.75, 6.55) -- (4.75, 6.95);
	\draw[<->, line width=1pt] (4.75, 7.05) -- (4.75, 7.45);
	
	\draw[<->, line width=1pt] (5.25, 0.05) -- (5.25, 0.45);
	\draw[<->, line width=1pt] (5.25, 0.55) -- (5.25, 0.95);
	\draw[<->, line width=1pt] (5.25, 1.05) -- (5.25, 1.45);
	\draw[<->, line width=1pt] (5.25, 1.55) -- (5.25, 1.95);
	\draw[<->, line width=1pt] (5.25, 2.05) -- (5.25, 2.45);
	\draw[<->, line width=1pt] (5.25, 4.05) -- (5.25, 4.45);
	\draw[<->, line width=1pt] (5.25, 6.05) -- (5.25, 6.45);
	\draw[<->, line width=1pt] (5.25, 7.05) -- (5.25, 7.45);
	
	\draw[<->, line width=1pt] (5.75, 0.05) -- (5.75, 0.45);
	\draw[<->, line width=1pt] (5.75, 0.55) -- (5.75, 0.95);
	\draw[<->, line width=1pt] (5.75, 1.55) -- (5.75, 1.95);
	\draw[<->, line width=1pt] (5.75, 3.05) -- (5.75, 3.45);
	\draw[<->, line width=1pt] (5.75, 3.55) -- (5.75, 3.95);
	\draw[<->, line width=1pt] (5.75, 4.05) -- (5.75, 4.45);
	\draw[<->, line width=1pt] (5.75, 5.05) -- (5.75, 5.45);
	\draw[<->, line width=1pt] (5.75, 5.55) -- (5.75, 5.95);
	\draw[<->, line width=1pt] (5.75, 6.05) -- (5.75, 6.45);
	\draw[<->, line width=1pt] (5.75, 6.55) -- (5.75, 6.95);
	
	\draw[<->, line width=1pt] (6.25, 0.05) -- (6.25, 0.45);
	\draw[<->, line width=1pt] (6.25, 0.55) -- (6.25, 0.95);
	\draw[<->, line width=1pt] (6.25, 1.05) -- (6.25, 1.45);
	\draw[<->, line width=1pt] (6.25, 1.55) -- (6.25, 1.95);
	\draw[<->, line width=1pt] (6.25, 4.05) -- (6.25, 4.45);
	\draw[<->, line width=1pt] (6.25, 4.55) -- (6.25, 4.95);
	
	\draw[<->, line width=1pt] (6.75, 0.05) -- (6.75, 0.45);
	\draw[<->, line width=1pt] (6.75, 1.55) -- (6.75, 1.95);
	\draw[<->, line width=1pt] (6.75, 3.05) -- (6.75, 3.45);
	\draw[<->, line width=1pt] (6.75, 5.05) -- (6.75, 5.45);
	\draw[<->, line width=1pt] (6.75, 6.55) -- (6.75, 6.95);
	
	\draw[<->, line width=1pt] (7.25, 0.55) -- (7.25, 0.95);
	\draw[<->, line width=1pt] (7.25, 1.05) -- (7.25, 1.45);
	\draw[<->, line width=1pt] (7.25, 2.05) -- (7.25, 2.45);
	\draw[<->, line width=1pt] (7.25, 5.05) -- (7.25, 5.45);
	\draw[<->, line width=1pt] (7.25, 6.55) -- (7.25, 6.95);								
	
	\draw[green, line width = 1.75pt] (4.25,2.75) rectangle (5.75,3.75);
	\end{tikzpicture}
\end{SCfigure}
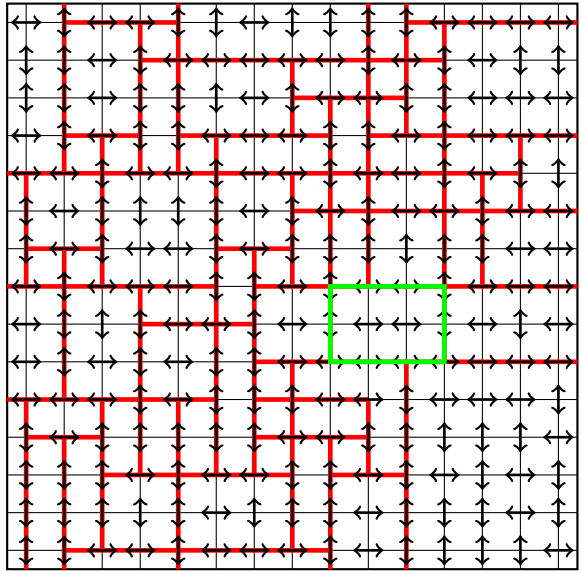

\begin{example} \label{ex: grow needed}
	In the following we provide an example which shows that in non degenerate settings the PHI cannot hold in full generality without a certain growth condition. Let us consider the setting of \Cref{ex: gdd} with \(d = 2\). More precisely, take the environment given by \Cref{fig: ex2}. The red part in \Cref{fig: ex2} is called the \emph{sink}. It is easy to see that once the walk has reached the sink, it cannot exit it. Consequently, by its recursive definition, the values of a caloric function on the sink are not influenced by the values outside of it. 
	For contradiction, assume that the PHI holds for all caloric functions \(u\), i.e. there exists a constant \(C > 0\) independent of \(u\) and \(R\) such that \[\max_{\Theta^p (K^+_R)} u \leq C \min_{\Theta^p (K^+_R)} u.\]
	Denote the points in the green box in \Cref{fig: ex2} by \(x_1\) and \(x_2\).
	Fix \(R\) large enough such that \(2^{R^2} > C\) and take a non-negative caloric function \(u\) on the cylinder with radius \(2R\) which takes the value one on the sink and
	\[
	u (x_1, 4 R^2) \equiv u (x_2, 4 R^2) \triangleq 2^{3 R^2}.
	\]
	Such a caloric function can be defined by recursion. We stress that \(u\) does not satisfy the growth condition \eqref{eq: growth cond}. Using the recursive definition, we note that 
	\[
	\max_{\Theta^p (K^+_R)} u \geq 2^{R^2},
	\]
	and the PHI implies 
	\[
	2^{R^2} \leq C \min_{\Theta^p(K^-_R)} u \leq C,
	\]
	which is a contradiction. We conclude that the PHI does not hold for \(u\).
\end{example}
\begin{remark}
	\begin{enumerate}
		\item[\textup{(i)}]
		The Harnack constant in \Cref{theo: PHI} is optimal in the sense that it can be taken arbitrarily close to \(H_2\). 
		\item[\textup{(ii)}] 
		In uniformly elliptic settings the growth condition \eqref{eq: growth cond} is not needed, see \cite{10.1215/S0012-7094-98-09122-0,10.1214/009117905000000440}.
		\item[\textup{(iii)}] Typically PHIs are formulated for forward equations. The time substitution \(t \mapsto 4 R^2 - t\) transforms the PHI for backward equations into a PHI for forward equations.
		\item[\textup{(iv)}] As the following simple example illustrates, it is necessary to compare cylinders of the same parity. Let \(u\) be a solution to the (backward) heat equation for the one-dimensional simple random walk in \(K_R\) with terminal condition 
		\[
		f (x, t) = \begin{cases}1,&|x| + t \text{ odd},\\
		0,&|x| + t \text{ even}.
		\end{cases}
		\]
		The recursive definition of a caloric function shows that
		\(
		u = f, 
		\)
		which implies
		\[
		\max_{K^+} u = 1, \quad \min_{K^-} u = 0.
		\]
		Clearly, \eqref{eq: PHI} does not hold when \(\Theta^p(K^+_R)\) and \(\Theta^p (K^-_R)\) are replaced by \(K^+_R\) and \(K^-_R\), respectively.
		An alternative strategy to deal with the parity issue is to formulate the PHI for
		\[
		\hat{u} (x, n) \triangleq u(x, n + 1) + u(x, n)
		\]
		instead of \(u\).
		This has been done in \cite{hambly2009} for random walks on percolation clusters.
	\end{enumerate}
\end{remark}

The proof of \Cref{theo: PHI} is given in \Cref{sec: Pf PHI}. It borrows arguments by Fabes and Stroock \cite{Fabes1989}. 
A version of the Fabes--Stroock argument has also been used in \cite{Berger18} to prove an elliptic Harnack inequality (EHI) under Standing Assumption \ref{SA: PHI}. Some ideas in the proof of \Cref{theo: PHI} are borrowed from \cite{Berger18}.

Harnack inequalities are important tools in the study of path properties of RWRE. As explained in the introduction, we think that \Cref{theo: PHI} might be the first step in direction of a local limit theorem. The EHI from \cite{Berger18} can for instance be used to prove transience of the RWBRE for \(d \geq 3\) in genuinely \(d\)-dimensional environments.  
Since this result seems to be new, we provide a statement and a  proof, which is similar to those of \cite[Theorem 3.3.22]{Zeitouni2004} and given in \Cref{sec:pf trans}. 
\begin{theorem}\label{theo:trans}
	When \(d \geq 3\), the RWRE is transient for \(P\)-a.a. environments.
\end{theorem}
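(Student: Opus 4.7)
The plan is to mimic the proof of \cite[Theorem~3.3.22]{Zeitouni2004}, with the elliptic Harnack inequality (EHI) from \cite{Berger18} playing the role of its uniformly elliptic predecessor. Transience is equivalent to $P^0_\omega(T_0^+ < \infty) < 1$ for $P$-a.a.\ $\omega$, where $T_0^+ \triangleq \inf\{n \geq 1 : X_n = 0\}$, so it suffices to show that the escape probability $P^0_\omega(\tau_R < T_0^+)$ is bounded away from $0$ uniformly in $R$, with $\tau_R \triangleq \inf\{n : X_n \notin B_R\}$. Decomposing after the first step, $P^0_\omega(\tau_R < T_0^+) \geq \omega(0, e_k)\, v_R(e_k)$ for any $k$ with $\omega(0, e_k) > 0$ (such a $k$ exists since $\sum_k \omega(0,e_k)=1$), where $v_R(y) \triangleq P^y_\omega(\tau_R < T_0)$.

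The balanced hypothesis makes $(\|X_n\|_2^2 - n)_{n \geq 0}$ a $P^0_\omega$-martingale, so optional stopping at $\tau_R$ yields $E^0_\omega[\tau_R] \leq (R+1)^2$; in particular $\tau_R < \infty$ $P^0_\omega$-a.s., which ensures that $v_R$ is well-defined. The function $v_R$ is nonnegative and $\omega$-harmonic on $B_R \setminus \{0\}$ with $v_R \equiv 1$ on $\partial B_R$ and $v_R(0) = 0$. By the quenched invariance principle (\Cref{theo: IP}) combined with the fact that $d$-dimensional Brownian motion does not hit points for $d \geq 2$, a walk started at mesoscopic distance $\rho R$ from the origin (for fixed $\rho \in (0,1)$) exits $B_R$ without hitting $0$ with probability tending to $1$; hence $v_R(y) \to 1$ at the scale $\|y\| \sim \rho R$ for $P$-typical $\omega$. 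Applying the EHI from \cite{Berger18} iteratively on dyadic balls that avoid the origin then transfers this near-$1$ lower bound from mesoscopic scales inward to a unit neighborhood of $0$, producing the desired bound $v_R(e_k) \geq c > 0$ uniformly in $R$.

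The main obstacle is the quantitative tension between the multiplicative losses per Harnack step (which naively produce a factor polynomial in $R$ across the $\sim \log R$ dyadic scales separating the origin from the mesoscopic region) and the strength of the invariance-principle input. Overcoming this is precisely where the hypothesis $d \geq 3$ enters essentially: the full transience of the limiting Brownian motion, not merely its non-hitting of points, must be used to upgrade the scale-$R$ estimate into a capacity-type bound sharp enough to offset the chain losses; alternatively, one can attempt to build an explicit super-/sub-harmonic barrier for the RWBRE, exploiting the martingale structure of the balanced walk together with the EHI. Making this trade-off rigorous is the technical heart of the argument.
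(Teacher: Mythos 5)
Your proposal takes a genuinely different route from the paper's, and it contains a gap that you yourself flag but do not close. You aim to lower-bound the escape probability $v_R(e_k)$ uniformly in $R$ by establishing $v_R\approx 1$ at a mesoscopic scale $\rho R$ and then chaining the EHI inward over dyadic annuli to the unit scale. As you observe, the chaining runs over $\sim\log R$ scales, each contributing a fixed multiplicative Harnack loss $\gamma$, producing a net factor $\gamma^{-c\log R}$, a negative power of $R$, which kills the uniform-in-$R$ bound. You propose two fixes (a ``capacity-type bound'' or an explicit barrier function) but carry out neither, and explicitly call the missing piece ``the technical heart of the argument'' — so as written the proof is incomplete. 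There is also a secondary issue in the starting point: the quenched invariance principle (\Cref{theo: IP}) is a functional CLT at macroscopic scale and does not, on its own, control the probability that the lattice walk avoids the \emph{single point} $0$ while exiting $B_R$; concluding $v_R(y)\to 1$ for $\|y\|\sim\rho R$ from the fact that $d$-dimensional Brownian motion is non-polar requires an extra ingredient (a local CLT or a quantitative Green's function estimate), not the functional CLT alone.

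The paper's proof (Appendix A, following the Lawler--Zeitouni template \cite[Theorem~3.3.22]{Zeitouni2004}) sidesteps the chaining issue entirely by using a different transience criterion: finiteness of the annealed Green's function $\int E^0_\omega[\#\text{ visits of } 0]\, P(d\omega)$. With $R_i=R_0^i$, the EHI from \cite{Berger18} is applied \emph{once per scale} to flatten $y\mapsto E^y_\omega[\#\text{ visits of } x \text{ before } \tau_{i+2}]$ over a ball $B^{i-1}(z)$ at distance $\sim R_i$ from the origin. Summing over $x\in B^{i-1}$ and bounding the total by $E^z_\omega[\tau_{i+2}]\leq cR_{i+2}^2$ (via the martingale $\|X_n\|_2^2-n$ and optional stopping), then averaging over environments via shift invariance of $P$, yields that the expected number of visits to $0$ during $(\tau_i,\tau_{i+1}]$ is $O(R_i^{2-d})$ plus an exponentially small correction from bad environments. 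The hypothesis $d\geq 3$ is used only to make $\sum_i R_i^{2-d}<\infty$ — a volume-versus-exit-time balance, not a chaining argument, and not via transience of the limiting Brownian motion per se. In short: your proposal, if completable, would give a quenched escape-probability bound, which is stronger; the paper's approach trades that for a proof that avoids the quantitative obstacle you identify, at the cost of obtaining only the annealed Green's function bound (which nevertheless suffices, via Fubini, for $P$-a.s.\ transience).
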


One key tool for the proof of \Cref{theo: PHI} is the following oscillation inequality, which can be seen as a parabolic version of \cite[Theorem 4.1]{Berger18}.
\begin{theorem}\label{theo: OI}
	There are constants \(R', \delta > 0, \zeta > 1\) and \(\gamma \in (0, 1)\) such that for every \(R \geq R'\) there exists a set \(G \in \mathcal{F}\) such that
	\(
	P(G) \geq 1 - e^{- R^{\delta}},
	\)
	and for every \(\omega \in G\) and every \(\omega\)-caloric function \(u\) on \(\overline{K}_{\zeta R}\) it holds that 
	\[
	\underset{\Theta^p(K_R)}{\osc}\ u \leq \gamma \underset{\Theta^p(K_{\zeta R})}{\osc}\ u, \qquad p \in \{o, e\},
	\]
	where 
	\[
	\underset{G}{\osc}\ u \triangleq \max_{G} u - \min_Gu, \qquad G \subset \mathbb{Z}^d \times \N \text{ finite}.
	\]
\end{theorem}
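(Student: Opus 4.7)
The plan is to implement a Krylov--Safonov-type oscillation contraction using the martingale representation from \Cref{lem: caloric mg} together with the parabolic quantitative homogenization estimate that the paper introduces as a separate ingredient. Fix an intermediate radius $\theta \in (1, \zeta)$, say $\theta := (1+\zeta)/2$, chosen so that for all sufficiently large $R$ any one-step discrete exit from $K_{\theta R}$ still lies inside $K_{\zeta R}$. After rescaling we may assume $\min_{\Theta^p(K_{\zeta R})} u = 0$ and $\max_{\Theta^p(K_{\zeta R})} u = 1$ (the case of zero oscillation being trivial). Pick $(x_+, t_+), (x_-, t_-) \in \Theta^p(K_R)$ attaining the max and the min of $u$ on $\Theta^p(K_R)$ respectively.

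By \Cref{lem: caloric mg} and optional stopping at the $P^{x_i}_\omega$-a.s.\ bounded exit time $\tau_i := \inf\{n \in \N : (X_n, n+t_i) \notin K_{\theta R}\}$, one has
\[
u(x_i, t_i) = E^{x_i}_\omega\big[u(X_{\tau_i}, \tau_i + t_i)\big], \quad i \in \{+, -\}.
\]
Since $\|X_{n+1}\|_1 - \|X_n\|_1 \in \{\pm 1\}$ while $n$ advances by one, the parity of $\|X_n\|_1 + n + t_i$ is conserved, so each exit point lies in $\Theta^p$; by the choice of $\theta$ it also lies in $K_{\zeta R}$, hence the corresponding $u$-value belongs to $[0, 1]$. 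Writing $\mu_\pm$ for the $P^{x_\pm}_\omega$-law of the corresponding exit point,
\[
u(x_+, t_+) - u(x_-, t_-) = \int u \, d(\mu_+ - \mu_-) \leq \|\mu_+ - \mu_-\|_{\mathrm{TV}}.
\]
It therefore suffices to produce a constant $\kappa > 0$, independent of $R$ and of the particular choices of $(x_\pm, t_\pm) \in \Theta^p(K_R)$, such that $\|\mu_+ - \mu_-\|_{\mathrm{TV}} \leq 1 - \kappa$ on a high-probability event $G$ with $P(G) \geq 1 - e^{-R^\delta}$; the theorem then follows with $\gamma := 1 - \kappa$.

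To produce this overlap I would first use the classical parabolic Poisson-kernel estimate for the limiting Brownian motion with covariance $\a$ from \Cref{theo: IP}: its exit measure $\bar\mu$ from the continuous cylinder $\K_{\theta R}$, started at any point of $\K_R$, admits a density with respect to parabolic surface measure that is bounded below by a positive constant on any fixed compact subset of $\pp \K_{\theta R}$ kept at definite distance from the corners, uniformly in the starting point. Fix such a subset $A$, so that $\bar\mu_+(A) \wedge \bar\mu_-(A) \geq 2\kappa$ for some $\kappa = \kappa(\a, \theta) > 0$. The parabolic quantitative homogenization estimate (the paper's second main ingredient) then supplies, on an event $G$ with $P(G) \geq 1 - e^{-R^\delta}$, an approximation of $\mu_\pm$ by $\bar\mu_\pm$ strong enough to transfer this macroscopic lower bound to a discrete analogue $A_{\mathrm{dis}}$ of $A$: $\mu_+(A_{\mathrm{dis}}) \wedge \mu_-(A_{\mathrm{dis}}) \geq \kappa$ for $R \geq R'$. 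Hence $\|\mu_+ - \mu_-\|_{\mathrm{TV}} \leq 1 - \kappa$ on $G$, as required.

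The main obstacle is the parabolic quantitative homogenization input itself: one must show, with high $P$-probability, that the discrete exit distribution from $K_{\theta R}$ closely approximates the Brownian one on macroscopic portions of the parabolic boundary, despite the absence of ellipticity and the possibility of entirely blocked directions. This demands a quantitative invariance principle with a rate, combined with percolation-type control (via the i.i.d.\ structure of $P$) on the size of the environmental ``holes'' that could trap the walk or slow its diffusion on scales comparable to $R$. Once this homogenization ingredient is in place, the oscillation-reduction step above is a short contraction computation.
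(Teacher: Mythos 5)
Your reduction of the oscillation inequality to a total-variation overlap of the two exit measures is indeed the correct high-level strategy, and it matches the paper's heuristic sketch following \Cref{theo: OI}. The parity argument, the use of \Cref{lem: caloric mg} with optional stopping, and the appeal to a Brownian Poisson-kernel lower bound combined with quantitative homogenization are all in the spirit of what the paper does. However, the final step contains a genuine error.

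You assert that from $\mu_+(A_{\mathrm{dis}})\wedge\mu_-(A_{\mathrm{dis}})\geq\kappa$ it follows that $\|\mu_+-\mu_-\|_{\mathrm{TV}}\leq 1-\kappa$. This is false: two probability measures can both put full mass on the same set $A$ while being mutually singular, so that their total-variation distance is $1$. The quantity that actually controls the TV distance is the \emph{pointwise} overlap $\sum_z\mu_+(z)\wedge\mu_-(z)$, and a lower bound on the mass each measure assigns to a common macroscopic region says nothing about it. This is not a technicality in the present setting: since the environment is allowed to be non-elliptic, the discrete exit distribution from $K_{\theta R}$ can be extremely rough -- concentrated on a sparse set of boundary points determined by the holes in the environment -- even on the high-probability event where the homogenization estimate of \Cref{theo: main1}/\Cref{coro: QE} holds. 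Homogenization only controls the exit probabilities of \emph{fixed macroscopic} boundary regions, not where within a region the walk lands.

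This is precisely why the paper's proof of \Cref{theo: OI} is multi-scale. The homogenization input is used only to couple the two walks so that they exit the current cylinder through the \emph{same} macroscopic boundary piece $\A_j$ (with probability bounded away from zero), after which both walks sit in a smaller cylinder centred on that piece and the construction restarts at a smaller radius. Iterating drives the scale down (the renewal/biased-random-walk argument of \Cref{lem: heat kernel} controls the number of zoom-in steps and the excursions to larger scales), until one reaches a base scale $R_o$ at which a genuine pointwise total-variation overlap is obtained from \Cref{coro: TV bound}. That base-scale overlap is established by a different mechanism -- the percolation events $\mathcal E,\mathcal H,\mathcal S$ in \Cref{prop: osc s c}, which restore enough effective ellipticity on the sink $\mathcal C_\omega$ to guide both walks to the same exit point -- and, crucially, its constant degenerates with the radius, which is why it can only be invoked at the fixed scale $R_o$ and not directly at scale $R$. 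Your single-scale argument skips this entire cascade and therefore cannot close the TV bound.

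A smaller point: you also need the exit point from $K_{\theta R}$ (not merely the interior $K_{\theta R}$) to lie inside $\ok_{\zeta R}$ so that $u$ is defined and in $[0,1]$ there; this is fine for $\theta<\zeta$ and $R$ large but should be stated. The main missing idea, however, is the iterative coupling between a region-level homogenization bound and a point-level base-scale bound.
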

The proof of \Cref{theo: OI} is based on the explicit construction of a coupling. 
Let us sketch the idea: Suppose that \(\hat{X}\) and \(\hat{Y}\) are coupled space-time walks in a fixed environment \(\omega \in \mathsf{B}\) such that the probability that \(\hat{X}\) and \(\hat{Y}\) leave a subcylinder of \(K_{\zeta R}\) in the same point is bounded from below by a uniform constant \(1 - \gamma > 0\). Denote the corresponding hitting times of the boundary by \(T\) and \(S\), respectively. Then, if \(\hat{X}\) starts, say, at \(\hat{x} \in \Theta^p(K_R)\) and \(\hat{Y}\) starts, say, at \(\hat{y} \in \Theta^p (K_R)\), the optional stopping theorem yields for every \(\omega\)-caloric function \(u\) that
\begin{align*}
u(\hat{x}) - u(\hat{y}) &= E \big[ u(\hat{X}_T) - u(\hat{Y}_S)\big] \\&= E\big[ (u(\hat{X}_T) -  u(\hat{Y}_S) ) \1_{\{\hat{X}_T \not = \hat{Y}_S\}}\big] \\&\leq \underset{\Theta^p (K_{\zeta R})}\osc u\ P(\hat{X}_T \not = \hat{Y}_S) \\& \leq \gamma \underset{\Theta^p (K_{\zeta R})} \osc u.
\end{align*}
This implies the oscillation inequality.

Another key tool for the proof of \Cref{theo: PHI} is the following quantitative homogenization estimate:
Let \(F \colon \oK_1 \to \mathbb{R}\) be a continuous function of class \(C^{2, 3}\) on \(\K_1\) such that 
\begin{align}\label{eq: CP}
\frac{dF}{dt} + \frac{1}{2} \sum_{i, j = 1}^d \a_{ij} \frac{d^2 F}{d x_i d x_j} = 0\quad \textup{ on } \K_1.
\end{align}
The existence of \(F\) is classical.
We define \(F_R \colon \K_R \to \mathbb{R}\) by 
\[
F_R(x, t) \triangleq F \left( \tfrac{x}{R}, \tfrac{t}{R^2}\right), \quad (x, t) \in \K_R.
\]
For \(u \colon \overline{Q}_R \to \mathbb{R}\), we define 
\[
(\mathcal{L}_\omega u) (y, s) \triangleq \sum_{i = 1}^{2d} \omega(y, e_i) u(y + e_i,1 + s) - u(y, s), \quad (y, s) \in Q_R.
\]
For  \(\omega \in \Omega\) let \(G_{\omega} \colon \overline{Q}_R \to \mathbb{R}\) be such that 
\[\begin{cases}
\mathcal{L}_\omega G_{\omega} = 0,& \text{ on } Q_R,\\
G_{\omega} = F_R, &\text{ on } \partial^p Q_R.\end{cases}
\]
It is easy to see that \(G_\omega\) exists in a unique manner: Indeed, first set \(G_\omega = F_R\) on \(\pp Q_R\) and then use \(\mathcal{L}_\omega G_\omega = 0\) on \(Q_R\) to define \(G_\omega\) recursively.
The following quantitative estimate is a parabolic version of \cite[Theorem 1.4]{Berger18}. Quantitative homogenization results for non-linear equations are given in \cite{AS14}.
\begin{theorem}\label{theo: main1}
	For all \(\epsilon \in (0, 1)\) there exist \(R_0 = R_0(\epsilon) \geq \tfrac{1}{\epsilon^2},C_1 = C_1(F)>0\), \(C_2 = C_2(\epsilon) > 0\), \(C_3= C_3(\epsilon) > 0\) and \(\delta > 0\) such that for all \(R \geq R_0\)
	we have 
	\[
	P \Big(\Big\{ \omega \in \Omega \colon \sup_{Q_R} |F_R - G_{\omega}| \leq \epsilon \ C_1 \Big\} \Big)\geq 1 - C_2 e^{- C_3 R^\delta}.
	\]
\end{theorem}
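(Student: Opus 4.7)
The plan is to compare $F_R$ with $G_\omega$ via probabilistic representations and then to use the quenched invariance principle \Cref{theo: IP} to match them on a mesoscopic scale. Since $G_\omega$ is $\omega$-caloric on $Q_R$ and agrees with $F_R$ on $\pp Q_R$, \Cref{lem: caloric mg} combined with optional stopping gives
\begin{equation*}
G_\omega(y,s) \,=\, E^y_\omega\bigl[F_R(X_\tau, \tau+s)\bigr], \qquad (y,s)\in Q_R,
\end{equation*}
with $\tau$ the first $n \in \N$ such that $(X_n,n+s) \in \pp Q_R$. On the other hand, because $F\in C^{2,3}$ solves the continuous equation \eqref{eq: CP}, a standard Feynman--Kac representation applied to a Brownian motion $\mathbf{B}^{y,s}$ with covariance $\a$ issued from $(y,s)$ yields
\begin{equation*}
F_R(y,s) \,=\, E\bigl[F_R(\mathbf{B}^{y,s}_{\hat\sigma}, \hat\sigma)\bigr],
\end{equation*}
where $\hat\sigma$ is the exit time of $\mathbf{B}^{y,s}$ from $\K_R$. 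Hence $F_R(y,s)-G_\omega(y,s)$ equals the difference of two exit expectations of the same smooth function $F_R$, reducing the task to quantifying how close $(X_\tau,\tau)$ is in law to $(\mathbf{B}^{y,s}_{\hat\sigma},\hat\sigma)$.

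To turn this observation into a pointwise $L^\infty$ bound, I would tile $Q_R$ by sub-cylinders of space-radius $R^\alpha$ and time-length $R^{2\alpha}$ with $\alpha \in (0,1)$ tuned to $\epsilon$. On each such sub-cylinder $F_R$ is well approximated by its second-order Taylor polynomial, with remainder controlled by derivatives of $F$ and absorbed into the constant $C_1=C_1(F)$. Crucially, since $\omega \in \mathsf{B}$ the walk $X$ is a $P^y_\omega$-martingale, so the linear-in-space contribution to the Taylor polynomial drops out upon taking $E^y_\omega$. The remaining quadratic-in-$x$ and linear-in-$t$ pieces combine, via the PDE \eqref{eq: CP} satisfied by $F$, into a single error driven by the gap between the first two moments of $(X_\tau-y,\tau)$ under $P^y_\omega$ and the corresponding moments for the Brownian motion --- precisely the object controlled by \Cref{theo: IP}.

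The main obstacle is to upgrade \Cref{theo: IP} to a quantitative version with uniform control across the polynomially many mesoscopic starting points and with exceptional probability at most $C_2 e^{-C_3 R^\delta}$. Two distinct issues arise. First, because we work outside the uniformly elliptic regime, the walker may be slowed by pockets of weak positive transition probabilities, and one must establish $\tau \lesssim R^2$ with exponentially good quenched probability; this I would handle with percolation-style estimates that exploit the i.i.d.\ product structure of $P$ together with genuine $d$-dimensionality (\Cref{SA: PHI}). Second, the quenched mean and covariance of $X_\tau$ must lie within $\epsilon$ of their Brownian counterparts off an exponentially small event; this is the parabolic analogue of the elliptic quantitative homogenization estimate \cite[Theorem~1.4]{Berger18}, and the concentration machinery developed there --- driven by the i.i.d.\ nature of $P$ --- adapts with only minor modifications to accommodate the time variable. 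Once both ingredients are in place, a union bound over the $O(R^{(d+2)(1-\alpha)})$ mesoscopic cylinders, together with a mild adjustment of $\delta$ to absorb the polynomial prefactor, yields the claimed estimate.
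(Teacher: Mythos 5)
Your reduction to exit expectations is correct and is indeed a useful way to think about the statement: since $G_\omega$ is $\omega$-caloric with boundary data $F_R$, optional stopping gives $G_\omega(y,s)=E^y_\omega[F_R(X_\tau,\tau+s)]$, and the continuous martingale representation gives $F_R(y,s)=E[F_R(\mathbf{B}^{y,s}_{\hat\sigma},\hat\sigma)]$. The difficulty is that you have now restated the theorem, not reduced it: quantitatively matching these two exit expectations with exponentially good quenched probability \emph{is} the parabolic quantitative homogenization statement. In the paper's logic this matching is Corollary~4.1 (\Cref{coro: QE}), which is \emph{deduced from} \Cref{theo: main1}, not available as an input.

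The concrete gap is in the middle paragraph, where you claim Taylor expansion on sub-cylinders plus the martingale property reduces matters to "the gap between the first two moments" of $(X_\tau-y,\tau)$, controlled by \Cref{theo: IP}. This does not close. First, \Cref{theo: IP} is a qualitative weak convergence statement with no rate, so it cannot by itself produce an $\epsilon$-accurate moment match on a polynomially long list of starting points off an $e^{-cR^\delta}$-small event. Second, and more fundamentally, even granting pointwise mesoscopic moment control (which is the content of the events $A_{n_0}(x)$ in the paper), the error committed on one mesoscopic cylinder does not simply add across the $\sim R^{2-2\alpha}$ mesoscopic time steps needed to reach $\pp Q_R$: a local quadratic error of size $\epsilon n_0/R^2$ per step can accumulate multiplicatively through the recursive definition of a caloric function, and controlling that accumulation in $L^\infty$ is exactly what a maximum-principle estimate is for. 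The paper handles this by proving a parabolic Aleksandrov--Bakelman--Pucci inequality (\Cref{theo: ABP}), applying it to a perturbed difference $h=H'+\text{penalty}$, and using the structure of the upper contact set $\Gamma_h$ together with H\"older's inequality to show that only the rare sites where moments are bad contribute. Nothing in your outline replaces that deterministic step; "union bound over mesoscopic cylinders" controls the probability of the bad events but not the propagation of errors from good cylinders. A smaller issue: you say the Taylor remainder is "absorbed into $C_1(F)$," but $C_1$ multiplies $\epsilon$, so remainders have to be shown to be $O(\epsilon)$, not merely $O(1)$; this again requires the mesoscopic scale $n_0$ to enter in a quantitatively controlled way, as it does through $A_{n_0}(x)$ and \Cref{lem: few points in upper contact set}.
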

The main tool in the proof of \Cref{theo: main1} is a new parabolic Aleksandrov--Bakelman--Pucci maximum principle, see \Cref{theo: ABP} below.

The remaining article is organized as follows: In \Cref{sec: Pf QE} we prove the quantitative estimate (\Cref{theo: main1}), in \Cref{sec: Exiting Cylinder} we provided estimates on the exit probabilities from a cylinder through a part of the boundary, in \Cref{sec: Pf OI} we prove the oscillation inequality (\Cref{theo: OI}), in \Cref{sec: Pf PHI} we prove the PHI (\Cref{theo: PHI}) and finally in \Cref{sec:pf trans} we prove transience for \(d \geq 3\).

\section{Proof of the Quantitative Estimate: \Cref{theo: main1}}\label{sec: Pf QE}

\subsection{A Parabolic Maximum Principle}
In this section we prove a parabolic version of the Aleksandrov--Bakelman--Pucci (ABP) maximum principle \cite[Theorem 3.1]{Berger2014}. For the uniform elliptic setting related results are given in \cite{10.1215/S0012-7094-98-09122-0, 10.1214/009117905000000440}.
We need further notation:
For \(k \in \N\) set
\begin{align*}
\partial^k O_R &\triangleq \big\{x \not \in O_R \colon \exists_{y \in O_R}, \|x - y\|_\infty \leq k\big\},\\
\partial^k Q_R  &\triangleq \big(\partial^k O_R \times [ \lfloor R^2 \rfloor + k]\big) \cup \big(O_R \times \{\lfloor R^2 \rfloor, \dots, \lfloor R^2 \rfloor + k\}\big),
\\
Q^k_R &\triangleq Q_R \cup \partial^k Q_R.
\end{align*}
Fix a function \(u \colon Q^k_R \to \mathbb{R}\) and define for \((y, s) \in Q_R\) 
\begin{align*}
I_u(y, s) &\triangleq \big\{p \in \mathbb{R}^d \colon u(y, s) - u(x, t) \geq \langle p, y - x\rangle\ \forall\ (x, t) \in Q^k_R \text{ with } t > s\big\},
\\
\Gamma_u& \triangleq \big\{(y, s) \in Q_R \colon I_u(y, s) \not = \emptyset\big\}.
\end{align*}
Let \(\alpha (n)\) be the coordinate that changes between \(X_{n-1}\) and \(X_{n}\) and define 
\begin{align}\label{eq: def T}
T &\triangleq \inf( n \in \N \colon \{\alpha(1), \dots, \alpha(n)\} = \{1,\dots, d\}),\qquad
T^{(k)} \triangleq T \wedge k.
\end{align}
The following theorem will be an important tool at several steps in the proof of \Cref{theo: PHI}.
\begin{theorem}\label{theo: ABP}
	There exists an \(R_o > 0\) such that for all \(R \geq R_o, 0 < k < R\) and all \(\omega \in \mathsf{B}\) the following implication holds: If \(u \leq 0\) on \(\partial^k Q_R\) and for all \(z \in  O_R\) 
	\begin{align}\label{eq: cond}
	P_\omega^z (T > k) < e^{- (\log R)^3},
	\end{align}
	then
	\begin{align}\label{eq: ABP}
	\sup_{Q_R} u  \leq \c R^{\tfrac{d}{d + 1}}\Big( \sum_{(y, s) \in \Gamma_u} \big|E^{y}_\omega \big[u(X_{T^{(k)}}, s + 1 + T^{(k)})\big] - u(y, s + 1) \big|^{d + 1}\Big)^{\tfrac{1}{d + 1}}.
	\end{align}
\end{theorem}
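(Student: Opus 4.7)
The proof is a discrete parabolic adaptation of the Aleksandrov--Bakelman--Pucci (ABP) maximum principle, extending the elliptic template of \cite{Berger18}. Assume $M \triangleq \sup_{Q_R} u > 0$ (else the bound is trivial). The idea is to construct a ``parabolic gradient image'' $\Phi \subseteq \mathbb{R}^{d+1}$ whose Lebesgue volume I bound from below in terms of $M^{d+1}/R^d$ and from above by the RHS of \eqref{eq: ABP}.

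For $p \in \mathbb{R}^d$ with $|p| < M/(4R)$, set $v_p(y, s) \triangleq u(y, s) - \langle p, y\rangle$ on $Q^k_R$. Since $u \le 0$ on $\partial^k Q_R$ and $|\langle p, y\rangle| \le |p| R < M/4$, we have $v_p \le M/4$ on $\partial^k Q_R$, while $v_p > 3M/4$ at the maximizer of $u$. The non-increasing backward running max $\bar M_p(s) \triangleq \sup_{t \ge s} \max_x v_p(x, t)$ therefore drops from a value $> 3M/4$ to a value $\le M/4$ as $s$ grows. For every $q \in [M/4, M/2]$, taking the smallest $s_q$ with $\bar M_p(s_q) \le q$ yields $y^*$ with $v_p(y^*, s_q - 1) > q$ and $v_p(x, t) \le q$ for all $t \ge s_q$; thus $(y^*, s_q - 1) \in \Gamma_u$, $p \in I_u(y^*, s_q - 1)$, and $v_p(y^*, s_q) \le q \le v_p(y^*, s_q - 1)$. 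Defining
\[
\Phi \triangleq \{(p, q) \in \mathbb{R}^{d+1} : \exists\, (y, s) \in \Gamma_u,\, p \in I_u(y, s),\, v_p(y, s+1) \le q \le v_p(y, s)\},
\]
the construction shows $\{|p| < M/(4R)\} \times [M/4, M/2] \subseteq \Phi$, so $|\Phi| \gtrsim M^{d+1}/R^d$. Slicing fiberwise over $(y, s) \in \Gamma_u$ (using $u(y, s) \ge u(y, s+1)$, a direct consequence of the defining inequality of $I_u$ at $(x, t) = (y, s+1)$), one gets
\[
|\Phi| \le \sum_{(y, s) \in \Gamma_u} |I_u(y, s)| \cdot (u(y, s) - u(y, s+1)).
\]

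For each $(y, s) \in \Gamma_u$, I then need to bound the summand $|I_u(y, s)| \cdot (u(y, s) - u(y, s+1))$ by a constant times the $(d+1)$-power of $E^y_\omega[u(X_{T^{(k)}}, s + 1 + T^{(k)})] - u(y, s+1)$. The one-step spatial bounds $p_i \in [u(y + e_i, s+1) - u(y, s),\, u(y, s) - u(y - e_i, s+1)]$ (for $p \in I_u(y, s)$) give $|I_u(y, s)| \le \prod_i L_i$ with $L_i \ge 0$ satisfying $\sum_i \omega(y, e_i) L_i = -\mathcal{L}_\omega u(y, s)$. In the uniformly elliptic regime, AM--GM on $\prod_i L_i$ immediately yields a bound of the form $|I_u(y, s)| \lesssim (-\mathcal{L}_\omega u(y, s))^d$; in the degenerate regime some $\omega(y, e_i)$ vanish, and one must iterate the ABP inequality $u(X_n, s+1+n) \le u(y, s) + \langle p, X_n - y\rangle$ (valid for $n \ge 0$ and $p \in I_u(y, s)$) along the balanced walk up to time $T^{(k)}$, invoking the martingale identity $E^y_\omega[X_{T^{(k)}}] = y$ to absorb the linear terms and construct a non-degenerate ``multi-step Hessian'' that averages out the zeros of $\omega$. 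The bad event $\{T > k\}$ is absorbed into the constants using hypothesis \eqref{eq: cond}.

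Combining the two bounds on $|\Phi|$ with the per-point estimate yields $M^{d+1}/R^d \lesssim \sum_{(y, s) \in \Gamma_u} |E^y_\omega[u(X_{T^{(k)}}, s + 1 + T^{(k)})] - u(y, s+1)|^{d+1}$, which rearranges to \eqref{eq: ABP}. The main technical obstacle is the per-point estimate: bundling the spatial subgradient volume $|I_u(y, s)|$ together with the temporal slice $u(y, s) - u(y, s+1)$ into a single $(d+1)$-power of the walk-averaged caloric defect, while handling degeneracies of $\omega$ via the $T^{(k)}$-step averaging. Hypothesis \eqref{eq: cond}, providing super-polynomial control on $P^z_\omega(T > k)$, is essential to make the bad-event contribution negligible uniformly in $R$.
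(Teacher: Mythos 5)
Your skeleton matches the paper's: define $M = \sup_{Q_R} u$, exhibit a region of $\mathbb{R}^{d+1}$ of volume $\gtrsim M^{d+1}/R^d$ sitting inside the parabolic gradient image $\{(p,q) : \exists (y,s)\in\Gamma_u,\ p\in I_u(y,s),\ q+\langle p,y\rangle\in[u(y,s+1),u(y,s)]\}$, bound that image's volume from above by $\sum_{\Gamma_u}\lambda(I_u(y,s))\cdot(u(y,s)-u(y,s+1))$ via the volume-preserving shear $(p,q)\mapsto(p,q-\langle p,y\rangle)$, and finish with a per-point estimate plus AM--GM. Your running-max parametrization of the lower bound (tracking the first time $\bar M_p(\cdot)$ drops below each level $q$) is a different but equivalent route to the paper's inclusion $\Theta\subseteq\chi(\Gamma_u)$; both work, modulo replacing $M/(4R)$ by, say, $M/(8R)$ so that $|\langle p,y\rangle|\le|p|(R+k)<M/4$ on all of $O_R\cup\partial^kO_R$ (you need the running max to actually drop below $M/4$, not just $M/2$).

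The genuine gap is the per-point estimate, which is where the entire difficulty and the hypothesis \eqref{eq: cond} live, and which you explicitly flag as ``the main technical obstacle'' without resolving. Your sketch ---``iterate the ABP inequality, invoke $E^y_\omega[X_{T^{(k)}}]=y$, construct a non-degenerate multi-step Hessian averaging out the zeros of $\omega$''--- is not a construction. Concretely: the naive one-step product $\prod_i L_i$ bound on $\lambda(I_u(y,s))$ is indeed what you would use in the uniformly elliptic case, but when $\omega(y,e_i)=0$ the corresponding $L_i$ is completely unconstrained by $\mathcal{L}_\omega u(y,s)$, and no amount of rewriting the one-step relation $\sum_i\omega(y,e_i)L_i=-\mathcal{L}_\omega u(y,s)$ rescues it. The paper does something structurally different: for each coordinate $i$, it defines first-move events $B_i^{(\pm)}$ (completed by an auxiliary coin flip on $\{T>k\}$ so that $P_\omega^y(B_i^{(\pm)})=\tfrac12$ by the balanced property), forms the conditional barycenters $\mathcal{O}_i \triangleq E_\omega^y[X_{T^{(k)}}\mid B_i^{(+)}]-y$, shows that for any $\beta\in I_u(y,s)$ the quantity $\langle\beta,\mathcal{O}_i\rangle$ is trapped in an interval of length $2\bigl(u(y,s)-E^y_\omega[u(X_{T^{(k)}},s+1+T^{(k)})]\bigr)$, and then controls the volume of the resulting parallelepiped via Hadamard together with the estimate $\|\mathcal{O}_i-e_i\|<e^{-(\log R)^2}$ from \cite[Claim 3.5]{Berger2014}. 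That last estimate is precisely where condition \eqref{eq: cond} enters; your proposal mentions that \eqref{eq: cond} ``absorbs the bad event $\{T>k\}$'' but does not identify that it is needed to keep the frame $(\mathcal{O}_i)_i$ non-degenerate. Without this construction you have no control on $\lambda(I_u(y,s))$ at vertices where $\omega$ is degenerate, and the proof does not close.
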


This ABP maximum principle follows by combining arguments from the proofs of \cite[Theorem 3.1]{Berger2014} and \cite[Theorem 2.2]{deuschel2018}. For completeness, we give a proof in  \Cref{app: ABP pf}.

\subsection{Proof of \Cref{theo: main1}}
We start with some general comments:
\begin{enumerate}
	\item[1.] We fix \(\varepsilon > 0, n_0 \in \mathbb{N}, K > 0\) and \(\alpha \in (0, \frac{1}{2})\). Here, \(\epsilon\) is as in the statement of the theorem and \(\alpha = \alpha (\varepsilon)\) is a free parameter, which we choose small enough in the end of the proof. First, we determine \(K = K(d, P)\), then \(\alpha = \alpha (\varepsilon)\), \(n_0 = n_0(\varepsilon)\) and \(R_0 = R_0(\varepsilon)\).
	\item[2.] We denote by \(\C\) any constant which only depends on the dimension \(d\), the function \(F\colon \oK_1 \to \mathbb{R}\) and the environment measure \(P\). The constant might change from line to line.
	\item[3.] We simplify the notation and write \(G\) instead of \(G_\omega\). Moreover, we set \[k = k(R) \triangleq \lfloor \sqrt{R} \rfloor.\]
\end{enumerate}

The idea is to control
\[
H \triangleq F_R - G
\]
with the ABP maximum principle given by \Cref{theo: ABP}.
By definition of \(F_R\) and \(G\), we see that
\[
\begin{cases}
\mathcal{L}_\omega H = \mathcal{L}_\omega F_R,& \text{ on } Q_R,\\
H = 0, &\text{ on }\pp Q_R.
\end{cases}
\]
As \(H\) is only defined on \(Q_R\) instead of \(Q^k_R\) we cannot apply \Cref{theo: ABP} directly. To overcome this problem, we consider an extension \(h\) of \(H\).
Set \(R^* \triangleq R + \sqrt{d} k\) and let \(H' \colon \ok_{R^*} \to \mathbb{R}\) be a solution to
\[
\begin{cases}
\mathcal{L}_\omega H' = (\mathcal{L}_\omega F_R) \1_{Q_R},&\text{ on } K_{R^*},\\
H' = 0,&\text{ on } \pp  K_{R^*}.
\end{cases}
\]
\begin{lemma}\label{lem: some bounds}
	\begin{enumerate}
		\item[\textup{(i)}] \(\max_{Q_R} |H - H'| \leq \max_{\pp Q_R} |H'|.\) 
		\item[\textup{(ii)}] \(\max_{\partial^k Q_R} |H'| \leq \frac{\C}{\sqrt{R}}\).
	\end{enumerate}
\end{lemma}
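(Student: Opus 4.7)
For \textup{(i)}, the plan is to note that \(v \triangleq H - H'\) is \(\omega\)-caloric on \(Q_R\): indeed \(\mathcal{L}_\omega v = \mathcal{L}_\omega F_R - (\mathcal{L}_\omega F_R)\1_{Q_R} = 0\) on \(Q_R\), while \(v = -H'\) on \(\pp Q_R\) since \(H\) vanishes there by construction. An application of \Cref{lem: caloric mg} together with optional stopping at the first exit time \(\tau_0\) of the space-time walk from \(Q_R\) then yields the representation \(v(y, s) = -E^y_\omega[H'(X_{\tau_0}, s+\tau_0)]\), from which \(|v| \leq \max_{\pp Q_R}|H'|\) follows immediately.

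For \textup{(ii)}, the analogous optional-stopping argument applied to \(H'\) on the larger cylinder \(K_{R^*}\), where \(H' = 0\) on \(\pp K_{R^*}\), gives the Duhamel representation
\[
H'(y, s) = - E^y_\omega\Big[\sum_{n=0}^{\tau - 1}(\mathcal{L}_\omega F_R)(X_n, s+n)\, \1_{Q_R}(X_n, s+n)\Big],
\]
where \(\tau\) is the exit time of the space-time walk from \(K_{R^*}\); hence \(|H'(y, s)| \leq (\max_{Q_R}|\mathcal{L}_\omega F_R|) \cdot E^y_\omega[\tau]\), and the plan is to bound the two factors separately. For the first factor, a Taylor expansion of \(F\) at \((y/R, s/R^2)\) — in which balancedness of \(\omega\) cancels all odd-order spatial terms — combined with the PDE \(\partial_t F = -\tfrac12 \sum_i \a_{ii}\partial^2_{x_i} F\) yields
\[
(\mathcal{L}_\omega F_R)(y, s) = \frac{1}{R^2}\sum_{i=1}^d \bigl[\omega(y, e_i) - \tfrac12 \a_{ii}\bigr] (\partial^2_{x_i} F)(y/R, s/R^2) + O(R^{-3}),
\]
so \(|\mathcal{L}_\omega F_R| \leq C_F/R^2\) on \(Q_R\), with \(C_F\) depending only on the \(C^{2,3}\)-bounds of \(F\).

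For the second factor, I split \(\partial^k Q_R\) into its time-cap part (where \(s \geq \lfloor R^2 \rfloor\)) and its spatial-shell part (where \(y \in \partial^k O_R\)). In the time cap, \(\tau \leq (R^*)^2 - s \leq \c R k\) deterministically. In the spatial shell, every \(y \in \partial^k O_R\) satisfies \(\|y\|_2 \geq R - 1\) — either \(y \notin B_R\), in which case \(\|y\|_2 \geq R\), or \(y \in B_R \setminus O_R\), in which case \(y\) has a lattice neighbor outside \(B_R\) — so \(R^* - \|y\|_2 \leq \sqrt d \, k + 1\). The quadratic barrier \(v(x) \triangleq (R^*)^2 - \|x\|_2^2\) satisfies \(\mathcal{L}_\omega v \equiv -1\) under balancedness (direct computation), so \(v(X_n) + n\) is a martingale, and optional stopping at \(\tau_{B_{R^*}} \geq \tau\) — combined with the trivial bound \(v(X_{\tau_{B_{R^*}}}) \geq -(2R^* + 1)\) — gives
\[
E^y_\omega[\tau] \leq E^y_\omega[\tau_{B_{R^*}}] \leq v(y) + 2 R^* + 1 \leq 2 R^*(\sqrt d \, k + 1) + 2 R^* + 1 \leq \c R k.
\]
With \(k = \lfloor\sqrt R\rfloor\), both cases produce \(E^y_\omega[\tau] \leq \c R^{3/2}\), and combining with the Laplacian bound gives \(|H'(y, s)| \leq C_F R^{-2} \cdot \c R^{3/2} = \c/\sqrt R\) as claimed. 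The main subtlety is the balance implicit in the choice of \(k\): if \(k\) is too large, \(\partial^k Q_R\) escapes the parabolic \(O(\sqrt R)\)-neighborhood of \(\pp K_{R^*}\), killing the sharp \(R^{3/2}\) exit-time estimate via the barrier; if \(k\) is too small, the extension \(H'\) fails to cover the region required for the subsequent ABP application of \Cref{theo: ABP}.
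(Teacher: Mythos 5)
Your argument is correct and follows essentially the same route as the paper: part (i) is the identical optional-stopping observation, and part (ii) uses the same Duhamel representation, the same Taylor bound $|\mathcal{L}_\omega F_R|\le\c/R^2$, and the same quadratic martingale $\|X_n\|_2^2-n$ to bound the exit time by $\c Rk=\c R^{3/2}$. The only cosmetic differences are that you bound the time-cap part of $\partial^k Q_R$ by a deterministic estimate on $\tau$ where the paper simply notes the Duhamel sum is empty there (so $H'\equiv0$), and that you invoke the PDE for $F$ to display the cancellation in $\mathcal{L}_\omega F_R$, which is not needed since the first-order spatial terms already cancel by balancedness and everything else is $O(R^{-2})$ by the raw Taylor remainder.
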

\begin{proof}
	(i). Fix \((y, s) \in Q_R\), set \(\rho \triangleq \inf (t \in \N \colon (X_t, s + t) \in \pp Q_R)\) and note that \(\mathcal{L}_\omega (H - H') = 0\) on \(Q_R\). 
	Thus, we deduce from the optional stopping theorem that 
	\begin{align*}
	|H (y, s) - H'(y, s)| &= \big| E^{y}_\omega \big[ H(X_\rho, s + \rho) - H'(X_\rho, s + \rho) \big] \big| \\&= \big| E^{y}_\omega \big[ H'(X_\rho, s + \rho) \big] \big| \\&\leq \max_{\pp Q_R} |H'|.
	\end{align*}
	Thus, (i) follows. 
	
	(ii).
	By Taylor's theorem, we obtain for all \((y, s), (x, t) \in Q_R\)
	\begin{equation}\label{eq: Taylor}
	\begin{split}
	F_{R} (y, s) - F_{R} (x, t) = \tfrac{1}{R} \big \langle \nabla F \big (\tfrac{x}{R},\tfrac{t}{R^2}\big), y - x\big\rangle &+ \tfrac{1}{2 R^2}\big \langle y - x, \nabla^2 F \big (\tfrac{x}{R}, \tfrac{t}{R^2}\big) (y- x)\big\rangle \\&+ \tfrac{1}{R^2} \tfrac{d F}{dt}  \big (\tfrac{x}{R}, \tfrac{t}{R^2}\big) (s - t) \\&+ \rho^{\text{t}}_s |s - t|^2 + \rho^{\text{s}}_y \|y - x\|^3_2,
	\end{split}
	\end{equation}
	where \(\rho^\text{s}_y\) is bounded by \(\frac{\C}{R^3}\) and \(\rho^\text{t}_s\) is bounded by \(\frac{\C}{R^4}\).
	Thus, for all \((x, t) \in Q_R\)
	\begin{align}\label{eq: LFR bound}
	\big| (\mathcal{L}_\omega F_R) (x, t) \big| &= \big| E^x_\omega \big[ F_R (X_1, 1 + t) \big] - F_R(x, t) \big|
	\leq \tfrac{\C}{R^2}.
	\end{align}
	We set \begin{align*}
	\tau &\triangleq \inf (t \in \N \colon (X_t, s + t) \in \pp K_{R^*}),\qquad \rho \triangleq \inf (t \in \N \colon X_t \in \partial B_{R^*}).\end{align*}	
	As \((\|X_n\|^2_2 - n)_{n \in \N}\) is a \(P^x_\omega\)-martingale, the optional stopping theorem yields that 
	\begin{align}\label{eq: st boundary bound}
	\max_{x \in \partial^k O_R} E^x_\omega \big[ \rho \big] = \max_{x \in \partial^k O_R} \big(E^x_\omega \big[ \|X_\rho\|^2_2 \big] - \|x\|^2_2\big)
	\leq \c Rk.
	\end{align}
	Fix \((y, s) \in \partial^k Q_{R}\).
	The optional stopping theorem also yields that 
	\begin{equation}\label{eq: H' comp}
	\begin{split}
	H' (y, s) 
	&=E^{y}_\omega \big[H' (X_{\tau}, s+ \tau) \big] - E^y_\omega \Big[ \sum_{t = 0}^{\tau - 1} \mathcal{L}_\omega H'(X_t, s + t) \Big]
	\\&= - E^y_\omega \Big[ \sum_{t = 0}^{\tau - 1} \mathcal{L}_\omega F_R (X_t, s + t) \1_{Q_R} (X_t, s + t) \Big].
	\end{split}
	\end{equation}
	If \(s = \lfloor R^2 \rfloor\), we have \(H' (y, s) = 0\) and if \(s < \lfloor R^2\rfloor\), then \(y \in \partial O_R\) and we deduce from \eqref{eq: LFR bound}, \eqref{eq: st boundary bound} and \eqref{eq: H' comp} that 
	\[
	\big| H' (y, s) \big| \leq \tfrac{\C}{R^2} E^y_\omega \big[ \tau \big] \leq \tfrac{\C}{R^2} E^y_\omega \big [ \rho \big] \leq \tfrac{\C Rk}{R^2} = \tfrac{\C}{\sqrt{R}}.
	\]
	The proof is complete.
\end{proof}

Next, we add a quadratic  penalty term to the function \(H'\). 
Define
\[
h (y, s) \triangleq H'(y, s) + \frac{\C' \varepsilon}{R^2} \|y\|^2_2, \quad (y, s) \in \ok_{R^*}.
\]
We will determine the constant \(\C' = \C'(F) > 0\) in \Cref{lem: few points in upper contact set} below.

To apply \Cref{theo: ABP} to \(h\), we have to control the upper contact set of \(h\) and the \(\omega\)-Laplacian of \(h\).	
In the next lemma we show that \(\C'\) can be chosen such that only a few points are in the upper contact set.
To formulate the lemma, we need more notation: 	Recall that we fixed a constant \(n_0 = n_0(\varepsilon)\). Set 
\[
(M^{(n_0)}_\omega (x))_{ij} \triangleq \frac{1}{n_0} E^x_\omega \big[ (X^{(i)}_{n_0} - x^{(i)})(X_{n_0}^{(j)} - x^{(j)})\big], \quad 1 \leq i, j \leq d,
\]
where \(X_{n_0}^{(k)}\) and \(x^{(k)}\) denote the \(k^\text{th}\) coordinate of \(X_{n_0}\) and \(x\).
Moreover, set 
\[
A_{n_0}(x) \triangleq \big\{ \omega \in \Omega \colon \|M^{(n_0)}_\omega(x) - \a \| < \varepsilon \big\},
\]
where \(\|\cdot\|\) denotes the trace norm, i.e. \(\|M\| \triangleq \textup{tr} (\sqrt{M M^*})\).
Here, \(\a\) is the limiting covariance matrix as given by \Cref{theo: IP}.
Finally, set 
\[
J_{n_0} (R) \triangleq \big\{\hat{x} \in \overline{Q}_R \colon d(\hat{x}, \pp Q_R) > {n_0} \big\}, \quad d \equiv \text{distance function}.
\]
We are in the position to formulate the lemma announced above:
\begin{lemma}\label{lem: few points in upper contact set}
	The constant \(\C'\) can be chosen such that the following holds: Let \(R > \frac{\sqrt{n_0}}{\varepsilon} \vee n_0\). If \((x, t) \in J_{n_0} (R)\) and \(\omega \in A_{n_0} (x)\), then \((x, t) \not \in \Gamma_h\).
\end{lemma}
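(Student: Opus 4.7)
The plan is to argue by contradiction: suppose that $(x,t)\in\Gamma_h$ and pick $p\in I_h(x,t)$. The condition $(x,t)\in J_{n_0}(R)$ guarantees that, under $P_\omega^x$, the walk $X$ satisfies $(X_j,t+j)\in Q_R$ for every $j\le n_0$, so the defining inequality of $I_h$ may be applied along the trajectory. Taking $P_\omega^x$-expectation and using the martingale property $E_\omega^x[X_{n_0}]=x$ (valid because $\omega\in\mathsf B$) kills the linear term in $p$ and leaves
\begin{equation*}
E_\omega^x\bigl[h(X_{n_0},t+n_0)\bigr]-h(x,t)\ \le\ 0. \qquad (\star)
\end{equation*}

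Now decompose $h=H'+\phi$ with $\phi(y)=\tfrac{\C'\varepsilon}{R^2}\|y\|_2^2$. For the quadratic piece, the martingale identity yields $E_\omega^x[\|X_{n_0}\|_2^2]-\|x\|_2^2=n_0\,\textup{tr}(M^{(n_0)}_\omega(x))$, and on $\omega\in A_{n_0}(x)$ the trace-norm bound forces $\textup{tr}(M^{(n_0)}_\omega(x))\ge \textup{tr}(\a)-\varepsilon$; thus $\phi$ contributes at least $\tfrac{\C'\varepsilon n_0}{R^2}(\textup{tr}(\a)-\varepsilon)$ on the left-hand side of $(\star)$. For $H'$, a telescoping argument together with the defining equation $\mathcal L_\omega H'=(\mathcal L_\omega F_R)\,\1_{Q_R}$ and the fact that the walk stays in $Q_R$ gives $E_\omega^x[H'(X_{n_0},t+n_0)-H'(x,t)]=E_\omega^x[F_R(X_{n_0},t+n_0)-F_R(x,t)]$. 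A Taylor expansion of $F_R$ around $(x,t)$ has its first-order spatial term killed by the martingale property, and the PDE~\eqref{eq: CP} for $F$ cancels the leading $\tfrac{n_0}{R^2}\bigl[\partial_t F+\tfrac12\textup{tr}(\a\nabla^2 F)\bigr]$ contribution, leaving the residual $\tfrac{n_0}{2R^2}\textup{tr}((M^{(n_0)}_\omega(x)-\a)\nabla^2 F)$, bounded by $\tfrac{\C n_0\varepsilon}{R^2}$ via $\omega\in A_{n_0}(x)$, together with Taylor remainders of order $\tfrac{n_0^{3/2}}{R^3}+\tfrac{n_0^2}{R^4}$ (using the random-walk moment bound $E_\omega^x[\|X_{n_0}-x\|_2^3]\le\C n_0^{3/2}$, obtained by Cauchy--Schwarz between the second and fourth moments).

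Substituting everything into $(\star)$ and dividing by $n_0\varepsilon/R^2$ yields
\begin{equation*}
\C'(\textup{tr}(\a)-\varepsilon)\ \le\ \C+\C\,\tfrac{\sqrt{n_0}}{\varepsilon R}+\C\,\tfrac{n_0}{\varepsilon R^2},
\end{equation*}
and the hypotheses $R>\sqrt{n_0}/\varepsilon$ and $R>n_0$ bound the last two terms by $\C$ and $\C\varepsilon$ respectively; thus the right-hand side is dominated by a constant depending only on $F$ and $P$. Choosing $\C'=\C'(F)$ large enough (and, say, $\varepsilon<\tfrac12 \textup{tr}(\a)$, which is harmless because $\varepsilon$ is taken small later) contradicts the inequality and proves $(x,t)\notin\Gamma_h$. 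The main obstacle is the Taylor analysis of the preceding paragraph: the naive deterministic bound $\|X_{n_0}-x\|_2\le\sqrt d\,n_0$ would give a third-order remainder of order $n_0^3/R^3$, which in the regime $R\sim\sqrt{n_0}/\varepsilon$ dominates the quadratic gain; only the random-walk $L^3$-bound $\C n_0^{3/2}$ makes the final constant $\C'$ depend on $F$ alone.
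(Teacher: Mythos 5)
Your proof is correct and follows essentially the same route as the paper: Taylor-expand $F_R$, use the martingale property to kill the gradient term, use $\omega\in A_{n_0}(x)$ to compare the Hessian term with $\a$ and the PDE to cancel the leading order, bound the cubic remainder via an $L^3$-moment estimate, transfer from $F_R$ to $H'$ via $\mathcal L_\omega(H'-F_R)=0$ on $Q_R$, and then observe that the quadratic penalty produces a strict positive drift that is incompatible with $p\in I_h(x,t)$. The only cosmetic differences are that you argue by contradiction rather than directly showing $I_h(x,t)=\emptyset$; you use Cauchy--Schwarz between the second and fourth moments in place of the paper's Burkholder--Davis--Gundy appeal (both give $E_\omega^x[\|X_{n_0}-x\|_2^3]\le\C n_0^{3/2}$); and you use the one-sided bound $\textup{tr}(M^{(n_0)}_\omega(x))\ge\textup{tr}(\a)-\varepsilon$ from $A_{n_0}(x)$ where the paper exploits the exact identity $\mathcal L_\omega\|\cdot\|_2^2\equiv 1$ (equivalently $\textup{tr}(M^{(n_0)}_\omega(x))=1$) — your version is slightly weaker and needs the harmless proviso $\varepsilon<\tfrac12\textup{tr}(\a)$, but it is correct. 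Your closing remark that the martingale $L^3$-bound (rather than the deterministic $\|X_{n_0}-x\|\le\sqrt d\,n_0$) is what keeps $\C'$ independent of $\varepsilon,n_0$ is exactly the point.
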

\begin{proof}
	Take \(R > \frac{\sqrt{n_0}}{\varepsilon} \vee n_0, (x, t) \in J_{n_0} (R)\) and \(\omega \in A_{n_0} (x)\). 
	Recalling \eqref{eq: Taylor} and using that the walk \(X\) is a \(P_\omega^x\)-martingale, we obtain 
	\begin{align*}
	E_{\omega}^x \big[ F_R (X_{n_0}, t + n_0) \big] &- F_R(x, t) 
	\\&= \tfrac{1}{2 R^2} E^x_\omega \big[ \langle X_{n_0} - x, \nabla^2 F \big (\tfrac{x}{R}, \tfrac{t}{R^2}\big) (X_{n_0}- x)\rangle \big] \\&\qquad\qquad+ \tfrac{n_0}{R^2} \tfrac{d F}{dt}  \big (\tfrac{x}{R}, \tfrac{t}{R^2}\big)  + \rho^{\text{t}}_{t + n_0} n_0^2 + E^x_\omega \big[\rho^{\text{s}}_{X_{n_0}} \|X_{n_0} - x\|^3_2\big].
	\end{align*}
	Since \(\omega \in A_{n_0} (x)\), we have
	\begin{align*}
	E^x_\omega \big[ \langle X_{n_0} - x, \nabla^2 F \big (\tfrac{x}{R}, \tfrac{t}{R^2}\big) (X_{n_0}- x)\rangle \big] &= n_0 \textup{tr} \big(\nabla^2 F \big (\tfrac{x}{R}, \tfrac{t}{R^2}\big) M^{(n_0)}_\omega (x) \big) 
	\\&\leq \C n_0 \varepsilon + n_0 \textup{tr} \big( \a \nabla^2 F \big (\tfrac{x}{R}, \tfrac{t}{R^2}\big) \big).
	\end{align*}
	Using that \(\frac{dF}{dt} + \frac{1}{2} \text{tr}(\a \nabla^2 F) = 0\), we obtain 
	\begin{align*}
	\big|E_{\omega}^x \big[ F_R (X_{n_0}, t + n_0) \big] - F_R(x, t)\big| &\leq \tfrac{n_0 \C \varepsilon}{2 R^2} + \tfrac{n_0}{R^2} \big( \tfrac{1}{2} \textup{tr} \big( \a \nabla^2 F \big (\tfrac{x}{R}, \tfrac{t}{R^2}\big) \big) + \tfrac{d F}{dt}  \big (\tfrac{x}{R}, \tfrac{t}{R^2}\big) \big) \\&\qquad\qquad \qquad \qquad+ \rho^{\text{t}}_{t + n_0} n_0^2 + E^x_\omega \big[\rho^{\text{s}}_{X_{n_0}} \|X_{n_0} - x\|^3_2\big]
	\\&= \tfrac{\C n_0 \varepsilon}{2 R^2}  + \rho^{\text{t}}_{t + n_0} n_0^2 + E^x_\omega \big[\rho^{\text{s}}_{X_{n_0}} \|X_{n_0} - x\|^3_2\big]
	\\&\leq \tfrac{\C n_0 \varepsilon}{2 R^2}  + \tfrac{\C n_0^2}{R^4} + \tfrac{\C}{R^3} E^x_\omega \big[ \|X_{n_0} - x\|^3_2\big].
	\end{align*}
	We deduce from the Burkholder--Davis--Gundy inequality that 
	\(
	E^x_\omega \big[ \|X_{n_0} - x\|^3_2 \big] \leq \c n^{\frac{3}{2}}_0.
	\)
	In summary, we have
	\begin{align*}
	\big|E^{\omega}_x \big[ F_R (X_{n_0}, t + n_0) \big] - F_R(x, t)\big| &\leq \tfrac{n_0 \varepsilon}{R^2} \big( \tfrac{\C}{2} + \C \big(\tfrac{n_0}{\varepsilon  R^2} + \tfrac{\C \sqrt{n_0}}{\varepsilon R}\big)\big) \leq \C \tfrac{n_0 \varepsilon}{R^2} \triangleq \C' \tfrac{n_0 \varepsilon}{2 R^2}.
	\end{align*}
	As \(\mathcal{L}_\omega (H' - F_R) = 0\) on \(Q_R\), we have
	\[
	E^x_\omega \big[ H' (X_{n_0}, t + n_0)\big] - H'(x, t) = E^x_\omega \big[F_R (X_{n_0}, t + n_0)\big] - F_R(x, t).
	\]
	We obtain 
	\begin{align*}
	E^x_\omega \big[ h (X_{n_0}, t + n_0) \big] &- h(x, t) \\&= E^x_\omega \big[ F_R (X_{n_0}, t + n_0) \big] - F_R (x, t) + \C' \tfrac{\varepsilon}{R^2} E^x_\omega \big[ \|X_{n_0}\|^2_2 - \|x\|^2_2 \big]
	\\&= E^x_\omega \big[ F_R (X_{n_0}, t + n_0) \big] - F_R (x, t) + \C' \tfrac{n_0 \varepsilon}{R^2}
	\\&\geq - \C' \tfrac{n_0 \varepsilon}{2 R^2} + \C' \tfrac{n_0 \varepsilon}{R^2} = \C' \tfrac{n_0 \varepsilon}{2R^2} > 0.
	\end{align*}
	Using this inequality and the fact that martingales have constant expectation, we obtain for all \(p \in \mathbb{R}^d\)
	\[
	E^x_\omega \big[ h (X_{n_0}, t + n_0) +  \langle p, x - X_{n_0} \rangle  \big] - h(x, t) > 0.
	\]
	Thus, for all \(p \in \mathbb{R}^d\) there exists a \(y\) in the \(P^x_\omega\)-support of \(X_{n_0}\) such that 
	\[
	\langle p, x - y\rangle > h(x, t) - h(y, t + n_0).
	\]
	We conclude that \((x, t) \not \in \Gamma_h\). The proof is complete.
\end{proof}
\Cref{lem: few points in upper contact set} suggests that we should restrict our attention to environments which are in \(A_{n_0} (x)\) for many \(x \in O_R\). Motivated by this observation, we define 
\[
A^{(1)} = A^{(1)}_R (\alpha) \triangleq \Big\{ \omega \in \Omega \colon \frac{1}{|B_R|} \sum_{x \in B_R} \1_{\{\omega \in A_{n_0} (x)\}} > 1 - 2 \alpha \Big\},
\]
where \(\alpha = \alpha(\varepsilon)\) is one of the free constants fixed in the beginning of the proof.

Next, we control the \(\omega\)-Laplacian of \(h\):
\[
(L_\omega h) (y, s) \triangleq E^{y}_\omega \big[ h(X_{T^{(k)}}, s + 1 + T^{(k)}) \big] - h(y,s + 1), \quad (y, s) \in K_R,
\]
where \(T^{(k)} = T \wedge k\) is the stopping time defined in \eqref{eq: def T}.
\begin{lemma}\label{lem: omega-Laplace bound}
	For all \((x, s) \in K_R\)
	\[
	\big| (L_\omega h)(x, s) \big| \leq \frac{\C E^x_\omega [T]}{R^2}.
	\]
\end{lemma}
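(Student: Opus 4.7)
The plan is to decompose $h = H' + \tfrac{\mathsf{c}'\varepsilon}{R^2}\|\cdot\|_2^2$ and estimate each piece separately via the optional stopping theorem, using the balanced property of $\omega$ and the already established pointwise bound on $\mathcal{L}_\omega F_R$ from \eqref{eq: LFR bound}.

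First, I would verify that the walk does not exit the domain of $H'$ during $T^{(k)}$ steps, so that the single-step identity for $\mathcal{L}_\omega H'$ can be iterated. Since $T^{(k)}\le k=\lfloor\sqrt{R}\rfloor$ and each step has unit $\|\cdot\|_\infty$-norm, starting from $(y,s)\in K_R$ we have $\|X_n\|_2\le R+k< R^*$ and $s+1+T^{(k)}\le \lfloor R^2\rfloor+k< \lceil (R^*)^2\rceil$ for $R$ large enough. Hence for every $n<T^{(k)}$ the space-time point $(X_n,s+1+n)$ lies in $K_{R^*}$, and the defining equation $\mathcal L_\omega H'=(\mathcal L_\omega F_R)\mathbf 1_{Q_R}$ is in force there.

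Next, the telescoping identity combined with the Markov property (i.e.\ the discrete Dynkin formula) gives
\[
E^{y}_\omega\big[H'(X_{T^{(k)}},s+1+T^{(k)})\big]-H'(y,s+1)
=E^{y}_\omega\!\left[\sum_{n=0}^{T^{(k)}-1}(\mathcal L_\omega H')(X_n,s+1+n)\right].
\]
Substituting $\mathcal L_\omega H'=(\mathcal L_\omega F_R)\mathbf 1_{Q_R}$ and invoking \eqref{eq: LFR bound}, the absolute value of the right-hand side is at most $(\mathsf{c}/R^2)E^y_\omega[T^{(k)}]\le (\mathsf{c}/R^2)E^y_\omega[T]$. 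For the quadratic term $q(y)\triangleq \|y\|_2^2$, the balanced assumption $\omega\in\mathsf{B}$ yields $(\mathcal L_\omega q)(y)=\sum_{i=1}^{2d}\omega(y,e_i)(2\langle y,e_i\rangle+1)=1$, so $(\|X_n\|_2^2-n)_{n\in\mathbb Z_+}$ is a $P^y_\omega$-martingale. By optional stopping at the bounded time $T^{(k)}$,
\[
E^{y}_\omega\big[\|X_{T^{(k)}}\|_2^2\big]-\|y\|_2^2=E^{y}_\omega[T^{(k)}]\le E^{y}_\omega[T].
\]
Since $q$ is time-independent, this is precisely $E^y_\omega[q(X_{T^{(k)}},s+1+T^{(k)})]-q(y,s+1)$, so the quadratic term contributes at most $(\mathsf{c}'\varepsilon/R^2)E^y_\omega[T]$.

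Adding the two contributions and absorbing the $\varepsilon$-factor into the generic constant gives the claimed bound $|(L_\omega h)(y,s)|\le \mathsf{c}\,E^y_\omega[T]/R^2$. The only step that required genuine care is the domain check ensuring the Dynkin identity applies with the equation for $H'$; the rest is a direct application of optional stopping to the two martingales $(H'(X_n,\cdot))$ and $(\|X_n\|_2^2-n)$. Notably, the martingale property for $\|X_n\|_2^2-n$ is the place where the standing assumption $P(\mathsf{B})=1$ is used, and the bound \eqref{eq: LFR bound}, which in turn relied on the smoothness of $F$ and the caloric equation \eqref{eq: CP}, is what produces the $R^{-2}$ decay.
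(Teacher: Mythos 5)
Your proof is correct and takes essentially the same approach as the paper: the discrete Dynkin/telescoping identity combined with the pointwise bound \eqref{eq: LFR bound} on $\mathcal{L}_\omega F_R$ and the fact that $(\|X_n\|_2^2 - n)_n$ is a $P^y_\omega$-martingale under the balanced assumption. The only (cosmetic) difference is that you split $h=H'+\tfrac{\C'\varepsilon}{R^2}\|\cdot\|_2^2$ and apply optional stopping to each summand separately, whereas the paper first bounds $\mathcal{L}_\omega h=(\mathcal{L}_\omega F_R)\1_{Q_R}+\C'\varepsilon/R^2$ pointwise by $\C/R^2$ and then applies the Dynkin formula once; your explicit domain check that the stopped walk stays inside $K_{R^*}$ is a worthwhile addition that the paper leaves implicit.
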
	
\begin{proof}
	Taylor's theorem yields that \(| (\mathcal{L}_\omega F_R)(x, t)| \leq \frac{\C}{R^2}\) for all \((x, t) \in K_R\).
	For \(f (x) = \|x\|^2_2\), note that 
	\(
	(\mathcal{L}_\omega f) (x, t) = E^x_\omega\big[ \|X_1\|^2_2 \big] - \|x\|^2_2 = 1.
	\)
	Consequently, we obtain that 
	\begin{align*}\
	|\mathcal{L}_\omega h  | = |(\mathcal{L}_\omega F_R ) \1_{Q_R} + \C' \tfrac{\varepsilon}{R^2}| \leq \tfrac{\C}{R^2}.
	\end{align*}
	We deduce from the optional stopping theorem that 
	\begin{equation*}
	\begin{split}
	\big| (L_\omega h)(x, s) \big| &= \Big| E^x_\omega \Big[ \sum_{t = 0}^{T^{(k)} - 1} (\mathcal{L}_\omega h) (X_t, s + 1 + t) \Big]\Big|
	\leq E^x_\omega [ T ] \tfrac{\C}{R^2}.
	\end{split}
	\end{equation*}
	This completes the proof.
\end{proof}
\Cref{lem: omega-Laplace bound} shows that the \(\omega\)-Laplacian of \(h\) can be controlled via \(x \mapsto E^x_\omega [T]\). Motivated by this observation, we define 
\[
A^{(2)}_R \triangleq \Big\{ \omega \in \Omega \colon \frac{1}{|B_R|} \sum_{x \in B_R} \big| E^x_\omega [T] \big|^{d + 2} \leq K \Big\},
\]
where \(K\) is one of the constants we fixed in the beginning.

As a last step before we apply \Cref{theo: ABP}, we introduce the following:
\[
A^{(3)}_R \triangleq \Big\{ \omega \in \Omega \colon P^x_\omega (T > k) < e^{- (\log R)^3} \text{ for all } x \in O_R \Big\},
\]
which is in conjunction with the statement of \Cref{theo: ABP}.

We are in the position to complete the proof of \Cref{theo: main1}. Take \(\omega \in A^{(1)} \cap A^{(2)} \cap A^{(3)}\) and let \(R\) be such that \(R_o \vee \frac{n_0}{\alpha} \vee \frac{\sqrt{n_0}}{\varepsilon} \vee n_0 \leq R\), where \(R_o\) is as in \Cref{theo: ABP}. 
Note that 
\[
\llambda (\B_R \backslash \B_{R - n_0}) = \c \int_{R - n_0}^R r^{d - 1} dr \leq \c n_0 R^{d- 1}.
\]
As \(\omega \in A^{(1)}_R\), we have for \(s < \lceil R^2 \rceil - n_0\)
\begin{align}\label{eq: B1}
\frac{1}{|B_R|} \sum_{x \in B_R} \1_{\{(x, s) \not \in J_{n_0}(R)\text{ or }\omega \not \in A_{n_0}(x)\}} \leq \tfrac{\C n_0}{R} + 2 \alpha \leq (\C + 2) \alpha = \C\alpha.
\end{align}
Moreover, because \(\omega \in A^{(2)}_R\), we deduce from \Cref{lem: omega-Laplace bound} that
\begin{align}\label{eq: B2}
\frac{1}{|B_R|} \sum_{y \in B_R} \big| (L_\omega h) (y, s) \big|^{d + 2} &\leq \frac{\C}{R^{2 (d + 2)}} \frac{1}{|B_R|} \sum_{y \in B_R} \big| E^y_\omega  [T] \big|^{d + 2} \leq \frac{\C  K}{R^{2 (d + 2)}},
\intertext{and}
\frac{1}{|B_R|} \sum_{x \in B_R} \big| (L_\omega h)(y, s) \big|^{d + 1}
&\leq \frac{\C }{R^{2 (d + 1)}} \frac{1}{|B_R|} \sum_{y \in B_R} \big| E^y_\omega  [T] \big|^{d + 2} \leq \frac{\C  K}{R^{2 (d + 1)}}. \label{eq: B3}
\end{align}
Furthermore, \Cref{lem: some bounds} yields that 
\begin{align}\label{eq: B4}
\max_{\partial^k Q_R} h \leq \max_{\partial^k Q_R} H' +   \C' \varepsilon \tfrac{(R + k)^2}{R^2} \leq \C \big( \tfrac{1}{\sqrt{R}} + \varepsilon\big).
\end{align}
Because \(\omega \in A^{(3)}_R\), we can apply \Cref{theo: ABP} and obtain that 
\begin{align}\label{eq: some eq}
\max_{Q_R} h - \max_{\partial^k Q_R} h \leq \C R^{\frac{d}{d + 1}} \Big( \sum_{(y, s) \in \Gamma_h} \big| (L_\omega h)(y, s) \big|^{d + 1} \Big)^{\frac{1}{d + 1}}.
\end{align}
Using \Cref{lem: few points in upper contact set}, we obtain
\begin{align*}
\eqref{eq: some eq}&= \C R^{\frac{2d}{d + 1}} \Big( \frac{1}{|B_R|} \sum_{(y, s) \in \Gamma_h} \1_{\{(x,s) \not \in J_{n_0}(R)\text{ or }\omega \not \in A_{n_0}(x)\}}\big| (L_\omega h) (y, s) \big|^{d + 1}\Big)^{\frac{1}{d + 1}}
\\&\leq \C R^{\frac{2d}{d + 1}} \Big( \frac{1}{|B_R|} \sum_{(y, s) \in K_R} \1_{\{(x, s) \not \in J_{n_0}(R)\text{ or }\omega \not \in A_{n_0}(x)\}}\big| (L_\omega h) (y, s) \big|^{d + 1}\Big)^{\frac{1}{d + 1}}.
\intertext{Using H\"older's inequality, \eqref{eq: B1}, \eqref{eq: B2} and \eqref{eq: B3}, we further obtain that}
\eqref{eq: some eq} &\leq \C R^{\frac{2d}{d + 1}} \Big(\sum_{s = 0}^{R^2 - n_0 - 1} \Big[ \frac{1}{|B_R|} \sum_{x \in B_R} \1_{\{(x, s) \not \in J_{n_0}(R)\text{ or }\omega \not \in A_{n_0}(x)\}} \Big]^{\frac{1}{d + 2}} \\ &\hspace{4cm} \Big[ \frac{1}{|B_R|} \sum_{y \in B_R} \big| (L_\omega h) (y, s) \big|^{d + 2} \Big]^{\frac{d + 1}{d + 2}} 
\\&\hspace{4cm} +\sum_{s = R^2 - n_0}^{R^2 - 1} \frac{1}{|B_R|} \sum_{x \in B_R} \big| (L_\omega h)(y, s) \big|^{d + 1}\Big)^{\frac{1}{d + 1}}
\\&\leq \C R^{\frac{2d}{d + 1}} \Big( R^2 \big[\C\alpha \big]^{\frac{1}{d + 2}} \big[ \C K \big]^{\frac{d + 1}{d + 2}} R^{- 2 (d + 1)} + n_0 \C K R^{- 2 (d + 1)} \Big)^{\frac{1}{d + 1}}
\\&\leq \C \Big( \alpha^{\frac{1}{d + 2}}K^{\frac{d + 1}{d + 2}} + \alpha K  \Big)^{\frac{1}{d + 1}}.
\end{align*}
Combining this bound with \Cref{lem: some bounds} and \eqref{eq: B4} shows that 
\begin{align*}
\max_{Q_R} H &\leq \max_{Q_R} H' + \tfrac{\C}{\sqrt{R}} 
\\&\leq \max_{Q_R} h + \tfrac{\C}{\sqrt{R}} 
\\&\leq \C  \big( \alpha^{\frac{1}{d + 2}}K^{\frac{d + 1}{d + 2}} + \alpha K  \big)^{\frac{1}{d + 1}} + \C \big( \tfrac{1}{\sqrt{R}} + \varepsilon\big) + \tfrac{\C}{\sqrt{R}}
\\&= \C  \big( \alpha^{\frac{1}{d + 2}}K^{\frac{d + 1}{d + 2}} + \alpha K  \big)^{\frac{1}{d + 1}} + \C  \varepsilon + \tfrac{\C}{\sqrt{R}}.
\end{align*}
Replacing the roles of \(F_R\) with \(G\) yields that 
\[
\max_{Q_R} |H| \leq \C  \big( \alpha^{\frac{1}{d + 2}}K^{\frac{d + 1}{d + 2}} + \alpha K  \big)^{\frac{1}{d + 1}}  + \C  \varepsilon + \tfrac{\C}{\sqrt{R}}.
\]
To complete the proof we determine the constants. 
First, we choose \(K\) according to the following lemma:
\begin{lemma} \textup{(\cite[Lemma 2.3]{Berger18})}
	One can choose \(K\) such that the following holds: There exists a constant \(\delta\) such that \[
	P\big(A^{(2)}_R\big)> 1 - K e^{- R^\delta}.
	\]
\end{lemma}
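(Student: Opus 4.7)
The strategy combines a $P$-moment estimate on $E^0_\omega[T]$ with a concentration inequality exploiting the product structure of $P$. Write $\mu \triangleq E_P[|E^0_\omega[T]|^{d+2}]$ and aim to take $K = 2\mu$. The first step is a stretched exponential tail bound $P(E^0_\omega[T] > n) \leq \c\, e^{-\c n^\alpha}$ for some $\alpha \in (0, 1)$. By genuine $d$-dimensionality and the i.i.d.\ structure of $P$, for each coordinate $i$ the set $\mathcal{G}_i \triangleq \{z \in \mathbb{Z}^d : \omega(z, e_i) > 0\}$ is a Bernoulli site percolation with a strictly positive parameter, and the sets $\mathcal{G}_i$ for different $i$ are independent. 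A supercritical percolation-type argument then shows that, from any fixed starting site, the walker encounters sites accessing each missing coordinate within polylogarithmic time, with $P$-probability at least $1 - e^{-\c n^\alpha}$. Summing the resulting quenched waiting times across the $d$ coordinate axes yields the tail bound, whence $\mu < \infty$.

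Next I truncate: set $\ell \triangleq \lfloor R^\beta \rfloor$ for a small $\beta > 0$ and define $Y_x \triangleq |E^x_\omega[T \wedge \ell]|^{d+2}$, so that $Y_x \in [0, \ell^{d+2}]$ and $Y_x$ is measurable with respect to $\{\omega(y, \cdot) : \|y - x\|_\infty \leq \ell\}$. The tail bound together with a union bound over $x \in B_R$ gives, on a $P$-event of probability at least $1 - \c R^d e^{-\c R^{\alpha\beta}}$,
\begin{equation*}
|B_R|^{-1} \textstyle\sum_{x \in B_R} |E^x_\omega[T]|^{d+2} \leq |B_R|^{-1} \textstyle\sum_{x \in B_R} Y_x + e^{-\c R^{\alpha \beta}}.
\end{equation*}

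Finally, partition $B_R$ into $(3\ell)^d$ disjoint sublattices by residue class modulo $3\ell$. Within each sublattice the $Y_x$ are i.i.d.\ under $P$ (the infinity-norm balls of radius $\ell$ around distinct sublattice points are disjoint) and uniformly bounded by $\ell^{d+2}$. Hoeffding's inequality applied on each sublattice, together with a union bound, yields
\begin{equation*}
P\big( |B_R|^{-1} \textstyle\sum_{x \in B_R} Y_x > E_P[Y_0] + \mu/4 \big) \leq \c\, e^{-\c R^{d - \beta(3d + 4)}}.
\end{equation*}
Since $E_P[Y_0] \leq \mu$, intersecting with the truncation event shows $|B_R|^{-1} \sum_{x \in B_R} |E^x_\omega[T]|^{d+2} \leq 2\mu = K$ for $R$ sufficiently large. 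Choosing $\beta$ small enough that both $\alpha\beta$ and $d - \beta(3d+4)$ exceed some fixed $\delta > 0$ then yields the claim. The main obstacle is the tail bound on $E^0_\omega[T]$: in contrast to the uniformly elliptic setting, here it requires delicate supercritical percolation reasoning to control the interaction between the walker's quenched dynamics and the sets $\mathcal{G}_i$.
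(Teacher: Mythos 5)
Note first that the paper does not actually prove this lemma; it is cited verbatim from \cite[Lemma 2.3]{Berger18}, so there is no in-paper proof to compare your attempt against. Your high-level outline — establish a $P$-moment $\mu = E_P[|E^0_\omega[T]|^{d+2}]$, then concentrate the spatial average over $B_R$ via a sublattice decomposition plus Hoeffding, with a truncation $T \wedge \ell$ to obtain local measurability — is a sensible and plausible skeleton, and the sublattice Hoeffding device is indeed the standard way to convert locality plus boundedness into stretched-exponential concentration in this setting.

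There are, however, two real gaps. First, you assert that the sets $\mathcal{G}_i = \{z : \omega(z,e_i) > 0\}$ are independent across $i$. They are not in general: each $\mathcal{G}_i$ is i.i.d.\ over sites, but the $2d$ entries of $\omega(z,\cdot)$ at a fixed site are coupled through $\nu$. In the paper's own \Cref{ex: gdd} the $\mathcal{G}_i$ are pairwise disjoint, which is the opposite of independent. This looks repairable (each $\mathcal{G}_i$ individually is still a supercritical Bernoulli field), but the assertion as written is false. The more serious gap is the truncation step. From the annealed tail bound $P(E^0_\omega[T] > n) \leq \c e^{-\c n^\alpha}$ and a union bound over $B_R$ you can only conclude that $E^x_\omega[T] \leq \ell$ for all $x \in B_R$ with high $P$-probability; this does not control the quenched tail of $T$ under $P^x_\omega$, and hence does not make $|E^x_\omega[T]|^{d+2} - Y_x$ small. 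Concretely, one has
\[
|E^x_\omega[T]|^{d+2} - Y_x \leq (d+2)\, \ell^{d+1}\, E^x_\omega\big[(T - \ell)^+\big],
\]
and $E^x_\omega[(T-\ell)^+]$ can be of the same order as $E^x_\omega[T]$ itself even when $E^x_\omega[T] \leq \ell$ (e.g.\ if $T$ places mass $1/\ell$ at value $\ell^2$ under $P^x_\omega$), so the claimed inequality $|B_R|^{-1}\sum_x |E^x_\omega[T]|^{d+2} \leq |B_R|^{-1}\sum_x Y_x + e^{-\c R^{\alpha\beta}}$ does not follow. What the argument actually needs is a quenched tail bound: on a set of environments of $P$-probability at least $1 - e^{-\c\ell^\alpha}$, one should have $\sup_{x\in B_R} P^x_\omega(T > \ell) \leq e^{-\c \ell^{\alpha'}}$, which does force $E^x_\omega[(T-\ell)^+]$ to be superpolynomially small. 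Your percolation sketch (``the walker encounters sites accessing each missing coordinate within polylogarithmic time, with $P$-probability at least $1 - e^{-\c n^\alpha}$'') blends $P$-randomness of the environment with $P^x_\omega$-randomness of the walk without cleanly separating them, and as written it delivers neither $\mu < \infty$ nor the quenched tail you need. That quenched tail estimate is precisely the technical heart of \cite[Lemma 2.3]{Berger18} and is the part that remains unproven here.
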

Next, we choose \(\alpha = \alpha(\varepsilon)\) such that 
\(
( \alpha^{\frac{1}{d + 2}} + \alpha )^{\frac{1}{d + 1}} \leq \varepsilon. 
\)
Then,
\[
\max_{Q_R} | H | \leq \C \varepsilon + \tfrac{\C}{\sqrt{R}}.
\]
We choose \(n_0 =  n_0 (\varepsilon)\) according to the following lemma:
\begin{lemma} \textup{(\cite[Lemma 2.1]{Berger18})}
	There exists a \(n_0 = n_0(\varepsilon)\) and constants \(c = c(n_0) > 0\) and \(C = C(n_0) > 0\) such that 
	\[
	P\big(A^{(1)}_R \cap A^{(3)}_R \big) > 1 - C e^{- c R^{\frac{1}{7}}}.
	\]
\end{lemma}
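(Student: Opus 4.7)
My plan is to bound the two events \(A^{(1)}_R\) and \(A^{(3)}_R\) separately with high \(P\)-probability and then intersect.

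For \(A^{(1)}_R\): I would first choose \(n_0 = n_0(\varepsilon)\) large enough that \(P(A_{n_0}(0)) \geq 1 - \alpha\). This is possible by \Cref{theo: IP}, which forces the quenched covariance \(M^{(n_0)}_\omega(0)\) to converge in \(P\)-probability to \(\a\); by translation invariance of \(P\) the same bound holds with any \(x\) in place of \(0\). The key structural observation is that \(M^{(n_0)}_\omega(x)\) depends only on \(\omega\) inside \(B_{n_0}(x)\), so \(\1_{A_{n_0}(x)}\) and \(\1_{A_{n_0}(y)}\) are \(P\)-independent whenever \(\|x - y\|_\infty > 2 n_0\). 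Partition \(B_R\) into \((2n_0+1)^d\) cosets of the sublattice \((2n_0+1)\mathbb{Z}^d\); each coset carries i.i.d.\ Bernoulli indicators of parameter at least \(1-\alpha\), so Hoeffding's inequality inside each coset, followed by a union bound across cosets, yields \(P(A^{(1)}_R) \geq 1 - c(n_0)\, e^{-c(n_0) R^d}\), which is much faster than required.

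For \(A^{(3)}_R\): The goal is the uniform bound \(P^x_\omega(T > k) < e^{-(\log R)^3}\) for all \(x \in O_R\), with \(k = \lfloor\sqrt{R}\rfloor\). I would argue in three steps. (i) From \(P(\mathsf{B}\cap\mathsf{G}) = 1\) one can extract a finite \emph{good block} of some fixed side \(L\) in which, starting from any entry point, the walk explores all \(d\) coordinate directions within \(CL^2\) steps with probability at least a constant \(q_0 > 0\); the existence of such a block is a tail computation on a bounded number of \(\nu\)-samples. (ii) A Peierls/renormalisation argument against super-critical site percolation on a coarse-grained lattice yields that, with \(P\)-probability at least \(1 - e^{-c R^{\delta}}\), every \(x \in O_R\) lies within some \(\ell(R) = o(\sqrt{R})\) of a good block. (iii) Quenched hitting-time estimates (via optional stopping applied to \(\|X_n\|_2^2 - n\)) then show that the walk reaches its nearest good block in \(O(\ell(R)^2)\) steps with overwhelming probability, so \((\log R)^3\) independent coordinate-exploration attempts fit inside the budget \(k\); each succeeds with probability at least \(q_0\), so
\[
P^x_\omega(T > k) \;\leq\; (1-q_0)^{(\log R)^3} \;\leq\; e^{-(\log R)^3}.
\]
A union bound over the \(|O_R| = O(R^d)\) sites costs only a polynomial factor, which is absorbed into \(e^{-(\log R)^3}\).

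Combining both estimates gives the claim, and the exponent \(R^{1/7}\) is dictated entirely by step (ii), arising from the optimisation of the block scale \(L\) against the excursion length \(\ell(R)\) and the probability of a bad coarse-grained block; this is the same balance performed in \cite[Lemma 2.1]{Berger18}. Accordingly the principal obstacle is step (ii): absent uniform ellipticity, one must convert the purely qualitative \(P(\mathsf{G}) = 1\) hypothesis into a quantitative percolation statement controlling the maximal bad cluster inside \(O_R\), and this renormalisation — not the easy Hoeffding step for \(A^{(1)}_R\) nor the hitting-time step (iii) — is where the real technical weight of the argument lies.
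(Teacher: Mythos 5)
The concentration argument for $A^{(1)}_R$ is sound: the locality of $M^{(n_0)}_\omega(x)$ in $B_{n_0}(x)$, translation invariance of $P$, and Hoeffding within cosets of $(2n_0+1)\mathbb{Z}^d$ do give an error term exponentially small in $R^d$, far better than needed. (One should also justify that $P(A_{n_0}(0)) \to 1$: the quenched invariance principle gives convergence in law, so one needs a uniform integrability step — supplied by Azuma for the bounded-increment martingale $X_n$ — to upgrade to convergence of the normalized second moments $M^{(n_0)}_\omega(0) \to \a$. This is routine but should be said.)

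The $A^{(3)}_R$ argument has a genuine gap in step (iii). From the martingale identity applied to $\|X_n\|_2^2 - n$ you obtain that the walk \emph{exits} $B_\ell(x)$ in expected time of order $\ell^2$; you do not obtain that it \emph{hits} a prescribed good block sitting somewhere inside $B_\ell(x)$. In the non-elliptic setting this distinction is fatal: the walk can only move along $\omega$-permitted edges, and if the walk has not yet taken a step in direction $i$ it is confined to the hyperplane $\{z : z_i = x_i\}$. A good block that is merely metrically close to $x$ may sit off that hyperplane, or on it but unreachable along any $\omega$-path from $x$. So the "overwhelming probability" assertion in step (iii) does not follow, and the conclusion $(1-q_0)^{(\log R)^3}$ is unsupported. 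Relatedly, step (i)'s notion of good block ("from any entry point the walk explores all $d$ directions within $CL^2$ steps with probability $\geq q_0$") is not well posed as stated, since nothing confines the walk to the block: what one can extract from $P(\mathsf{G})=1$ is merely that a fixed local pattern (say a length-$d$ $\omega$-path taking each direction exactly once, with all transition probabilities bounded below by some $\xi$) occurs with a uniformly positive $\nu^{\otimes}$-probability, which yields exploration probability $\geq \xi^d$ \emph{when the walk actually stands at the base of such a pattern}.

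The fix is to strengthen step (ii) from the metric statement "every $x \in O_R$ lies within $\ell$ of a good block" to a path statement of the form "for every $x \in O_R$, every self-avoiding $\omega$-path of length $\ell$ started at $x$ visits a good pattern." With that in hand, step (iii) goes through cleanly: when the walk exits $B_\ell(x)$ — which by your optional-stopping estimate happens within $O(\ell^2)$ steps with reasonable probability — the loop-erasure of its trajectory is a self-avoiding $\omega$-path of length $\geq \ell$, which therefore passes through a good pattern, which the walk itself therefore visits. Each pass gives an independent chance $\geq \xi^d$ of taking the missing direction, and budgeting $(\log R)^3$ rounds against $k = \lfloor\sqrt{R}\rfloor$ then gives the claimed bound. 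This self-avoiding-path renormalization is exactly what the present paper uses elsewhere for the same kind of issue (see the definition of $\mathcal{I}_R$ in \Cref{sec: pf PHI notation} and its justification in \Cref{lem: main lemma repair}, via a domination-by-percolation argument \`a la Liggett--Schonmann--Stacey and the bound on bad-cluster sizes); that is the shape of the estimate you should be proving in step (ii), and it is indeed where the $R^{1/7}$ exponent comes from.
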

Now, we choose \(R_0 = R_0 (\varepsilon) \geq R_o \vee \frac{n_0}{\alpha} \vee \frac{1}{\varepsilon^2}  \vee \frac{\sqrt{n_0}}{\varepsilon} \vee n_0\), where \(R_o\) is as in \Cref{theo: ABP}.
In summary, for all \(R \geq R_0\) and \(\omega \in A^{(1)}_R \cap A^{(2)}_R \cap A^{(3)}_R\) 
we have 
\(
\max_{Q_R} |H| \leq \C\varepsilon,
\)
and 
\(
P \big(A^{(1)}_R \cap A^{(2)}_R \cap A^{(3)}_R \big) \geq  1 - C (\varepsilon) e^{- c(\varepsilon) R^{\delta}}.
\)
The proof of \Cref{theo: main1} is complete.
\qed

\section{An Estimate for the Exit Measure}\label{sec: Exiting Cylinder}
Take a Borel set \(\A \subseteq \partial \K_1\) those boundary has zero 
measure, i.e. 
\begin{align}\label{eq: assp meas zero boundary}
\text{meas} \big(\{x \in \partial \K_1 \colon d (x, \A) = 0 = d(x, \partial^p \K_1 \backslash \A)\}\big)= 0, \quad d \equiv \textup{distance function},
\end{align}
and define for \((x, t) \in \mathbb{Z}^d \times \N\)
\begin{equation}\begin{split}\label{eq: A_R}
RA (x, t) \triangleq \big\{ (y, s) \in \pp K_R (x, t) \colon \big(\tfrac{y - x}{\|y - x\|_2 \vee R}, \tfrac{s - t}{\lceil R^2\rceil} \big) \in \A \big\}.
\end{split}
\end{equation}
We also set \(RA \triangleq RA(0)\).
Furthermore, set
\begin{equation}\label{eq: exit prob def}
\begin{split}
\tau_s &\triangleq \inf (t \in \mathbb{R}_+ \colon (X_t, t + s) \not \in \K_1), \quad  s \in [0, 1]\\
\rho_s &\triangleq \inf(t \in \N \colon (X_t, t + s) \not \in K_R), \quad s \in [\lceil R^2\rceil],\\
\chi (x, s) &\triangleq \bm^x ( (X_{\tau_s}, \tau_s + s) \in \A), \quad (x, s) \in \oK_1,\\
\chi_R (x, s) &\triangleq \chi \big(\tfrac{x}{R}, \tfrac{s}{R^2}),\quad (x, s) \in \oK_R,\\
\Phi_R (x, s) &\triangleq P_\omega^x ( (X_{\rho_s}, \rho_s + s) \in RA), \quad (x, s) \in \ok_R.
\end{split}
\end{equation}
Here, \(\bm^x\) denotes the law of a Brownian motion with covariance matrix \(\a\) and starting value \(x\).
\begin{corollary}\label{coro: QE}
	For every \(\epsilon > 0\) and \(\theta \in (0, 1)\) there exist \(R_o = R_o(\A,\varepsilon, \theta) > 0, \c_1 = \c_1(\A,\varepsilon, \theta), \c_2 = \c_2(\A,\varepsilon, \theta)\) and \(\delta > 0\) such that for all \(R \geq R_o\) 
	\[
	P \Big( \Big\{ \omega \in \Omega \colon \sup_{K_{\theta R}} | \chi_{R} - \Phi_{R}| \leq \varepsilon \Big\} \Big) \geq 1 - \c_1e^{- \c_2R^\delta}.
	\]
\end{corollary}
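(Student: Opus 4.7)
The plan is to approximate the discontinuous boundary indicator $\1_\A$ from above and below by smooth functions, apply the quantitative homogenization estimate \Cref{theo: main1} to the corresponding classical solutions, and then sandwich the discrete exit probability $\Phi_R$ between the resulting discrete solutions via the discrete maximum principle.

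First, using the hypothesis \eqref{eq: assp meas zero boundary} that $\partial \A$ is null on the parabolic boundary, I would construct smooth boundary data $f^\pm \colon \partial^p \K_1 \to [0,1]$ with $f^- \leq \1_\A \leq f^+$ pointwise. Let $F^\pm \colon \oK_1 \to \mathbb{R}$ denote their Brownian harmonic extensions; by standard parabolic regularity these are continuous on $\oK_1$, lie in $C^\infty(\K_1)$, and solve \eqref{eq: CP}. The crucial property to arrange is $\sup_{\K_\theta}(F^+ - F^-) < \varepsilon/4$: this is achievable because the exit distribution of Brownian motion on $\partial^p \K_1$ possesses a density that is bounded uniformly over starting points in $\K_\theta$, thanks to the separation of $\K_\theta$ from the singular top rim $\partial \B_1 \times \{1\}$. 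Consequently $f^\pm$ can be taken to coincide with $\1_\A$ outside an arbitrarily thin tubular neighbourhood of $\partial \A$; the optional stopping theorem then gives $F^- \leq \chi \leq F^+$ on $\oK_1$.

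Next, I apply \Cref{theo: main1} separately to $F^+$ and $F^-$, each time with the tolerance parameter chosen so that $\varepsilon \, C_1(F^\pm) \leq \varepsilon/4$. This produces, with $P$-probability at least $1 - Ce^{-cR^\delta}$, the bound $\sup_{Q_R} |F^\pm_R - G^\pm_\omega| \leq \varepsilon/4$, where $G^\pm_\omega$ are the discrete solutions with boundary data $F^\pm_R$ on $\pp Q_R$. Continuity of $F^\pm$ up to the parabolic boundary, combined with the $(\|y-x\|_2 \vee R)$-normalisation in \eqref{eq: A_R}, yields $F^-_R \leq \1_{RA} \leq F^+_R$ on $\pp K_R$ modulo an $O(R^{-1})$ error stemming from the mismatch between $\partial B_R$ and $R\,\partial \B_1$ and between $\lceil R^2 \rceil$ and $R^2$. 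The discrete maximum principle then gives $G^-_\omega - o(1) \leq \Phi_R \leq G^+_\omega + o(1)$ on $K_{\theta R}$, which is contained well inside $Q_R$ for large $R$. Assembling these estimates with the triangle inequality produces $|\Phi_R - \chi_R| \leq \varepsilon$ uniformly on $K_{\theta R}$ for all sufficiently large $R$, which is the claim.

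The principal technical challenge lies in the first step: one must produce a \emph{single} pair of smooth approximants $f^\pm$ whose harmonic extensions remain $\varepsilon/4$-close throughout the \emph{interior} cylinder $\K_\theta$, not merely on the boundary itself. The uniform bound on the harmonic-measure density works precisely because $\theta < 1$ keeps $\K_\theta$ bounded away from the singular top rim of the parabolic cylinder, where $\partial^p \K_1$ loses smoothness. A secondary, and much more routine, obstacle is the boundary matching in the third step, where discretisation errors between $\pp K_R$ and the scaled continuous parabolic boundary are readily absorbed into $o(1)$ terms.
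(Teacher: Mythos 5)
Your proposal follows the same strategy as the paper's proof: sandwich the indicator $\1_\A$ by smooth boundary data from above and below, take the corresponding caloric extensions, apply \Cref{theo: main1} to each, and transfer the sandwich from $\partial^p K_R$ to all of $K_R$ via optional stopping. The only cosmetic differences are that the paper parametrizes the sandwich by a shrinking/fattening parameter $\gamma$ and sends $\gamma\to 0$ using uniform continuity of $\chi$, whereas you invoke boundedness of the harmonic-measure density over $\K_\theta$ (with $\theta<1$ keeping you away from the corner $\partial\B_1\times\{1\}$) to get the same smallness; both routes rest on \eqref{eq: assp meas zero boundary} and are equivalent in substance.
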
 
\begin{proof}
	\emph{Step 1:} Fix a small number \(\gamma > 0\) and define 
	\begin{align*}
	\A^+_\gamma &\triangleq \big\{x \in \partial^p \K_1 \colon d(x, \A) \leq \gamma \big\},\\
	\A^-_\gamma &\triangleq \big\{ x \in \A \colon d(x,\partial^p \K_1 \backslash \A) \geq \gamma\big\}.
	\end{align*}
	Note that
	\[
	\A^-_{2 \gamma} \subseteq \A^-_\gamma \subseteq \A\subseteq \A^+_\gamma \subseteq \A^+_{2 \gamma}.
	\]
	Let \(f^{(1)}, f^{(2)} \colon \pp \K_1 \to [0, 1]\) be sufficiently smooth functions 
	such that 
	\[
	f^{(1)} = 
	\begin{cases}
	1,&\text{on } \A^-_{\gamma},\\
	0,&\text{on } \pp \K_1 \backslash \A^-_{2 \gamma},
	\end{cases}
	\qquad 
	f^{(2)} = \begin{cases} 1,& \text{on } \A^+_{\gamma},\\
	0,& \text{on } \pp \K_1 \backslash \A^+_{2\gamma}.
	\end{cases}
	\]
	For \(k = 1, 2\) let \(J^{(k)} \colon \oK_1 \to \mathbb{R}\) be a solution to the boundary value problem
	\begin{align*}
	\begin{cases}
	\frac{1}{2} \sum_{i, j = 1}^d \a_{ij} \frac{\dd^2 J^{(k)}}{\dd x_i \dd x_j} + \frac{\dd J^{(k)}}{\dd t} = 0,& \text{on } \K_1, \\
	J^{(k)} = f^{(k)},&\text{on } \pp \K_1.
	\end{cases}
	\end{align*}
	The optional stopping theorem yields that 
	\begin{align}\label{eq: J stoch rep}
	J^{(k)} (x, s) = E_\textup{BM}^x [f^{(k)}(X_{\tau_s}, \tau_s + s)], \quad  (x, s) \in \oK_1, k = 1, 2.
	\end{align}
	Next, we set 
	\[
	F^{(k)}_{R + 1} (x, t) \triangleq J^{(k)} \big(\tfrac{x}{R + 1}, \tfrac{t}{(R + 1)^2}\big), \quad (x, t) \in \overline{Q}_{R + 1}.
	\]
	Note that 
	\[
	\bigcap_{\gamma> 0} \A^+_{2 \gamma} = \{x \in \pp \K_1 \colon d(x, \A) = 0\},
	\]
	which implies that 
	\(
	\bigcap_{\gamma > 0}\A^+_{2 \gamma} \backslash \A \subseteq \partial \A.
	\)
	Thus, due to \eqref{eq: assp meas zero boundary} and 
	\eqref{eq: J stoch rep}, we obtain that
	\begin{align*}
	\max_{(x, t) \in K_{\theta (R + 1)}} (F^{(2)}_{R + 1} &(x,t) - \chi_{R + 1}(x,t)) 
	\\&\leq \max_{(x, t) \in \K_{\theta}} \bm^{x} ((X_{\tau_t}, \tau_{t} + t ) \in \A^+_{2\gamma} \backslash \A) \to 0 \text{ as } \gamma\searrow 0.
	\end{align*}
	Next, note that 
	\[
	\bigcup_{\gamma > 0} \A_{\gamma}^- = \A\cap \{x \in \pp \K_1 \colon d(x, \pp \K_1 \backslash \A) > 0\},
	\]
	which implies that \(\A \backslash \bigcup_{\gamma > 0} \A^-_\gamma \subseteq \partial \A\).
	Due to \eqref{eq: assp meas zero boundary} and \eqref{eq: J stoch rep}, 
	we obtain
	\begin{align*}
	\max_{(x, t) \in K_{\theta (R +  1)}} (\chi_{R + 1} &(x, t) - F^{(1)}_{R + 1}(x,  t)) 
	\\&\leq \max_{(x, t) \in \K_{\theta}} \bm^x ((X_{\tau_t}, \tau_t + t) \in \A\backslash \A^{-}_\gamma)
	\to 0 \text{ as }  \gamma \searrow 0.
	\end{align*}
	Consequently, there exists a \(\gamma = \gamma (\varepsilon, \theta) > 0\) such that the following holds:
	\begin{align}\label{eq: Lem (i) ersatz}
	F^{(2)}_{R + 1} - \varepsilon \leq \chi_{R + 1} \leq F^{(1)}_{R + 1} + \varepsilon\text{ on }K_{\theta (R + 1)}.
	\end{align} 
	Take this
	\(\gamma\).
	Note that the function \(\chi\) is uniformly continuous on \(\oK_\theta\), as it is continuous on \(\K_1\). Thus, 
	assuming	that \(R^o\) is large enough,
	we have
	\[
	\max_{\K_{\theta R}} | \chi_R - \chi_{R + 1}| \leq \varepsilon.
	\]
	Now, it follows from \eqref{eq: Lem (i) ersatz} that 
	\begin{align}\label{eq: Lem (i) ersatz2}
	F^{(2)}_{R + 1} - 2\varepsilon \leq \chi_{R} \leq F^{(1)}_{R + 1} + 2\varepsilon\text{ on }K_{\theta R}.
	\end{align}
	\emph{Step 2:} For \(P\)-a.a. \(\omega \in \Omega\) and \(k = 1,  2\) we define \(G^{(k)}_{R + 1} \colon K_R \to \mathbb{R}_+\) as solutions to the following boundary value problem:
	\[
	\begin{cases}
	\mathcal{L}_\omega G^{(k)}_{R + 1} = 0,& \text{on } Q_{R + 1},\\
	G^{(k)}_{R + 1} = F^{(k)}_{R + 1}, &\text{on } \pp Q_{R + 1}.
	\end{cases}
	\]
	For \(k = 1, 2\), let \(C^{(k)}_1 = C_ 1^{(k)} (J^{(k)}) = C_1^{(k)} (\A, \varepsilon, \theta) > 0\) be the constant from \Cref{theo: main1} and set 
	\[
	\he \triangleq \frac{\varepsilon}{C^{(1)}_1 \vee C^{(2)}_2}.
	\]
	Using \Cref{theo: main1} with \(\he\) instead of \(\varepsilon\) yields that there exists a set \(G = G(\A, \varepsilon, \theta, R) \in \mathcal{F}\) such that, after eventually enlarging \(R_o\), for all \(\omega \in G,\) all \(R \geq R_o\) and \(k = 1, 2\)
	\begin{align}\label{eq: quanti estimate coro ineq}
	\max_{Q_{R + 1}} |F^{(k)}_{R + 1} - G^{(k)}_{R + 1}| \leq C^{(k)} \he \leq \varepsilon.
	\end{align}
	\emph{Step 3:} 
	In this step we show that
	\begin{align}\label{eq: (iii) ersatz}
	G^{(1)}_{R + 1} - 2\varepsilon \leq \Phi_{R} \leq G^{(2)}_{R + 1} + 2\varepsilon\text{ on } K_{R}.
	\end{align}
	For \((x, t) \in \partial K_R \backslash RA\), 
	we obtain for sufficiently large \(R_o\) that
	\(
	F^{(1)}_{R + 1} (x, t) 
	< \varepsilon.
	\)
	To see this, recall that \(J^{(1)} = 0\) on \(\pp \K_1 \backslash \A\) and note that for \((x, t) \in \partial K_R \backslash RA\)
	\[
	F^{(1)}_{R + 1} (x, t) = 
	\big|J^{(1)} \big(\tfrac{x}{R + 1}, \tfrac{t}{(R + 1)^2}\big) - J^{(1)} \big(\tfrac{x}{\|x\|_2 \vee R}, \tfrac{t}{\lceil R^2 \rceil}\big) \big|.
	\]
	Since
	\[
	\big\| \tfrac{x}{R + 1} - \tfrac{x}{\|x\|_2 \vee R} \big\|_2 \vee \big| \tfrac{t}{(R + 1)^2} - \tfrac{t}{\lceil R^2\rceil} \big|^\frac{1}{2} \leq \big(1 - \tfrac{R}{R + 1} \big) \vee \big(1 - \tfrac{\lceil R^2 \rceil}{(R + 1)^2} \big)^\frac{1}{2} \to 0 \text{ as } R \to \infty,
	\]
	the uniform continuity of \(J^{(1)}\) on \(\oK_1\) yields the claim. 	
	In the same manner, eventually enlarging \(R_o\) again, we obtain
	\(
	1 - F^{(2)}_{R + 1} (x, t) \leq \varepsilon
	\)
	for \((x, t) \in RA\).
	In summary, 
	\[
	F^{(1)}_{R + 1} - \varepsilon \leq \1_{RA} \leq F^{(2)}_{R + 1} + \varepsilon \quad \text{ on } \partial K_R.
	\]
	Together with \eqref{eq: quanti estimate coro ineq}, we conclude that on \(\partial K_R\)
	\[
	G^{(1)}_{R + 1} - 2 \varepsilon \leq \1_{RA} \leq G^{(2)}_{R + 1} + 2  \varepsilon.
	\]
	Using once again the optional stopping theorem yields \eqref{eq: (iii) ersatz}.
	\\\noindent\\
	\emph{Step 4:}
	Due to \eqref{eq: Lem (i) ersatz2} and \eqref{eq: (iii) ersatz}, we obtain that on \(K_{\theta R}\)
	\[
	G^{(1)}_{R + 1} - F^{(1)}_{R + 1} - 4\varepsilon \leq \Psi - \chi_{R} \leq G^{(2)}_{R + 1} - F^{(2)}_{R + 1} + 4\varepsilon.
	\]
	Finally, with \eqref{eq: quanti estimate coro ineq}, we conclude that on \(K_{\theta R}\)
	\begin{align*}
	|\Psi - \chi_{R}| 
	&\leq 4 \varepsilon + |G^{(1)}_{R + 1} - F^{(1)}_{R + 1}| + |G^{(2)}_{R + 1} - F^{(2)}_{R + 1}|
	\leq 6 \varepsilon.
	\end{align*}
	The proof is complete.
\end{proof}
\section{Proof of the Oscillation Inequality: \Cref{theo: OI}}\label{sec: Pf OI}
\subsection{An Oscillation Inequality on a Small Scale}
The main result of this section is the following oscillation inequality on a small scale:
\begin{proposition}\label{prop: osc s c}
	There exist constants \(\alpha > 0,  c \in \mathbb{N}\) such that for all \(R \geq 1\) there is a constant \(C \in (0, 1)\) and a set \(G \in \mathcal{F}\) with \(P(G) \geq 1 - \c R^{3d} e^{- R^\alpha}\) such that for all \(\omega \in G, p \in \{o, e\}\) and every \(\omega\)-caloric function \(u \colon \ok_{(c + 3) R} \to \mathbb{R}\) the following oscillation inequality holds:
	\begin{align}\label{eq: smsc osc}
	\underset{\Theta^p(K_{R})}{\osc}\ u \leq C \underset{\Theta^p(K_{(c + 3) R})}{\osc}\ u.
	\end{align}
\end{proposition}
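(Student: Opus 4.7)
The plan is to carry out the coupling strategy sketched just before \Cref{theo: OI}. In a good environment $\omega$, and for arbitrary starting points $\hat x, \hat y \in \Theta^p(K_R)$, I will construct a coupling $(\hat X, \hat Y)$ of two space-time walks with these initial conditions such that, letting $T, S$ denote their first exit times from $K_{(c+3)R}$,
\[
P(\hat X_T \neq \hat Y_S) \leq C
\]
for some constant $C \in (0,1)$ independent of $\hat x, \hat y$ and $\omega \in G$. Since the walks preserve the parity of $\|X_n\|_1 + n$, both exit points lie in $\Theta^p(\pp K_{(c+3)R})$. The optional stopping theorem applied to the $\omega$-caloric $u$ then yields $|u(\hat x) - u(\hat y)| \leq C\, \osc_{\Theta^p(\overline{K}_{(c+3)R})} u$, and combined with the parabolic maximum principle (which identifies the closure oscillation with the oscillation on the parabolic boundary, controllable from the interior oscillation up to caloric propagation) this gives \eqref{eq: smsc osc}.

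By Strassen's theorem, constructing such a coupling reduces to a uniform lower bound on the total overlap $\sum_{\hat z} \mu_{\hat x}(\hat z) \wedge \mu_{\hat y}(\hat z)$ of the two discrete exit distributions $\mu_{\hat x}, \mu_{\hat y}$ from $K_{(c+3)R}$. To obtain it I transfer a Brownian analogue via quantitative homogenization of the exit measure. Fix a finite partition $\{\A_1, \dots, \A_N\}$ of $\partial^p \K_1$ into Borel sets of small diameter whose boundaries satisfy \eqref{eq: assp meas zero boundary}, and apply \Cref{coro: QE} to each $\A_j$ with accuracy $\epsilon$ and with $\theta$ slightly less than $1/(c+3)$. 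A union bound over $j$ and over starting points $\hat x \in K_R$ produces an event $G \in \mathcal{F}$ with $P(G) \geq 1 - \c R^{3d} e^{-R^\alpha}$ on which each discrete exit probability $\Phi^{(j)}$ into $RA_j$ is within $\epsilon$ of the Brownian counterpart $\chi^{(j)}$, uniformly in $\hat x \in K_R$. For Brownian motion with covariance $\a$, the classical parabolic Harnack inequality \cite{doi:10.1002/cpa.3160170106} yields that, when $c$ is large, the Brownian exit density from $\K_{(c+3)R}$ on the regular part of its parabolic boundary is uniformly pinched between two positive multiples of a fixed reference density as the starting point ranges over $\K_R$. Consequently $\sum_j \chi^{(j)}(\hat x) \wedge \chi^{(j)}(\hat y) \geq \eta > 0$ for all $\hat x, \hat y$ in the rescaled inner cylinder, and combining the two steps gives the macroscopic overlap $\sum_j \mu_{\hat x}(RA_j) \wedge \mu_{\hat y}(RA_j) \geq \eta/2$ on $G$ (for $\epsilon$ small enough relative to $\eta/N$).

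The main technical obstacle is the passage from this macroscopic overlap on the partition pieces to the pointwise overlap on individual exit sites that Strassen's criterion actually requires, because \Cref{coro: QE} only controls exit probabilities into sets of macroscopic surface measure. Several routes are possible: (i) iterate the homogenization estimate on successively finer partitions after conditioning on the walks heading toward the same macroscopic piece $RA_{j^*}$, re-applying \Cref{coro: QE} at progressively smaller scales via the strong Markov property and absorbing each union bound into the $R^{3d}$ prefactor; (ii) apply \Cref{theo: main1} directly with boundary test functions concentrated near individual lattice points and exploit the linearity of the estimate; or (iii) combine the macroscopic coupling with a local mixing argument on $RA_{j^*}$ to match the conditional exit laws. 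Each route preserves $\Theta^p$ automatically since the walks preserve the space-time parity, and delivers a final coupling with $P(\hat X_T = \hat Y_S) \geq \eta'$ for some $\eta' > 0$ depending on $c, N$ and $\epsilon$ but not on $R$, so that \eqref{eq: smsc osc} holds with $C = 1 - \eta' \in (0, 1)$.
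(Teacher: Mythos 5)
Your proposal is not what the paper does, and there are two genuine obstructions that break it.

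First, the quantitative homogenization input (\Cref{coro: QE}, and \Cref{theo: main1} behind it) only becomes effective at scales $R \geq R_o(\varepsilon)$, and $R_o$ is large. \Cref{prop: osc s c}, however, must hold for every $R \geq 1$: it is the base case of the multi-scale recursion in Sections 5.2--5.4, and those very sections are what use \Cref{coro: QE}. You cannot manufacture the base case out of homogenization --- for $R$ below the threshold the estimate is vacuous. Correspondingly, the constant $C$ that \Cref{prop: osc s c} produces is allowed to (and does) depend on $R$: the paper obtains $C(R) = 1 - (2d)^{-2(c+3)^2R^2}\xi(R)^{2(c+3)^2R^2}$, which tends to $1$ very fast as $R$ grows. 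Your scheme aims at a uniform $\eta' > 0$, which is the stronger conclusion proved only later, in \Cref{theo: OI}, after combining the base case with homogenization at large scales.

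Second, the step you yourself flag as the main obstacle --- passing from overlap of exit probabilities on macroscopic boundary pieces $RA_j$ to pointwise overlap on lattice exit sites, as Strassen requires --- is precisely where non-ellipticity kills the argument, and none of your three routes closes the gap. Iterating \Cref{coro: QE} at finer scales (route (i)) or applying \Cref{theo: main1} with concentrated boundary data (route (ii)) both founder on the same floor $R \geq R_o$, and the error in \Cref{theo: main1} is an additive $\varepsilon\,C_1$ that does not shrink with the support of the boundary function, so it swamps any would-be small exit probability. Route (iii) assumes some local mixing near $RA_{j^*}$, but in a genuinely non-elliptic environment the quenched exit distribution can be supported on a sparse, $\omega$-dependent subset of $\pp K_{(c+3)R}$ (the walk lives on the sink $\mathcal{C}_\omega$), and two walks started at different sites --- or outside the sink --- have no a priori reason to be able to hit a common site. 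Without structural control of $\omega$ the pointwise overlap can be zero.

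The paper sidesteps both problems with a direct guiding coupling that uses percolation-theoretic control of the environment rather than homogenization. It introduces the events $\mathcal{E}$ (nonzero transition probabilities in $B_{(c+3)R}$ are at least $\xi$), $\mathcal{H}$ (holes of the sink reachable from $B_R$ have radius at most $\lfloor R \rfloor$), and $\mathcal{S}$ (any two points of $\mathcal{C}_\omega \cap B_{2R}$ are connected in $\omega$ within $cR$ steps), estimates their probabilities via the i.i.d.\ structure and \cite[Propositions 3.1, 3.2]{Berger18}, and on $G = \mathcal{E}\cap\mathcal{H}\cap\mathcal{S}$ guides two independent walks explicitly: balancedness gives each step towards the sink probability at least $\tfrac{1}{2d}$, and once both are in the sink, $\mathcal{E}$ and $\mathcal{S}$ let them be driven to a common space-time point and then marched together to the boundary. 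This gives a positive coupling probability for every $R \geq 1$, from which the oscillation inequality follows by the optional stopping computation you correctly set up at the start. It is this percolation/ellipticity-in-a-box argument, not a homogenization estimate, that you need here.
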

\begin{proof}
	To prove this result we need input from \cite{Berger18}:
	For \(x, y \in \mathbb{Z}^d\) we write \(x \oc y\) in case
	\[
	P^x_\omega (\exists_{n \in \mathbb{N}} \colon X_n = y) > 0.
	\]
	We call a set \(A \subseteq \mathbb{Z}^d\) to be \emph{strongly connected} w.r.t. \(\omega \in \Omega\) if \(x \oc y\) for  every \(x, y \in A\).
	Moreover, we call a set \(A \subseteq \mathbb{Z}^d\) to be a \emph{sink} w.r.t. \(\omega\in \Omega\) if it is strongly connected w.r.t. \(\omega\) and for every \(x \in A\) and \(y \not \in A\)
	\[
	P_\omega^x (\exists_{n \in \mathbb{N}} \colon X_n = y) = 0.
	\]
	In other words, a sink is a strongly connected set from which the walk cannot escape. Due to \cite[Proposition 1.13]{Berger18},
	for \(P\)-a.a. \(\omega \in \Omega\) there exists a unique sink \(\mathcal{C}_\omega\).
	
	We now turn to the main proof of \Cref{prop: osc s c}. Fix two parameters \(c \in \mathbb{N}\) and \(\xi > 0\) and a radius \(R \geq 1\), and define 
	\begin{align*}
	\mathcal{E} = \mathcal{E}(R) &\triangleq  \big\{ \omega \in \Omega \colon\forall_{k = 1, \dots, d} \not\hspace{-0.05cm}\exists_{z \in B_{(c + 3) R}} \ \omega(z, e_k) \in (0, \xi)\big\},\\
	\mathcal{H} = \mathcal{H} (R) &\triangleq \big\{\omega \in \Omega \colon \forall_{z \in B_{R}} \not\hspace{-0.05cm}\exists_{x \in \mathbb{Z}^d} \text{ such that } z \oc x, x \not \in \mathcal{C}_\omega, \|x - z\|_\infty = \lfloor R\rfloor \big\},
	\\ \mathcal{S} = \mathcal{S}(R) &\triangleq \big\{ \omega \in \Omega \colon \forall_{x, y \in \mathcal{C}_\omega \cap B_{2 R}}\ \text{dist}_\omega (x, y) \leq cR \big\}.
	\end{align*}
	Providing an intuition, we have the following:
	\begin{enumerate}
		\item[-]
		If \(\omega \in \mathcal{E}\), the walk in \(\omega\) is elliptic in \(\mathcal{C}_\omega \cap B_{(c + 3) R}\).
		\item[-] If \(\omega \in \mathcal{H}\), when starting in \(B_R\) the worst case is that the walk in \(\omega\) is in a hole of the sink \(\mathcal{C}_\omega\) with radius \(\lfloor R\rfloor\).
		\item[-]
		If \(\omega \in \mathcal{S}\), all points in \(\mathcal{C}_\omega \cap B_{2R}\) can be reached by a walk in \(\omega\) in at least \(\lfloor cR\rfloor\) steps.
	\end{enumerate}
	We set \(G = G(R) \triangleq \mathcal{E} \cap \mathcal{S} \cap  \mathcal{H}\)
	and take \(\xi = \xi(R)\) small enough such that 
	\(
	P(\mathcal{E}^c) \leq R^{3d} e^{- R^\alpha}.
	\)
	Due to \cite[Proposition 3.1]{Berger18}, there exists a constant \(\alpha > 0\) (only depending on the dimension) such that
	\(
	P(\mathcal{H}^c) 
	\leq \c R^d e^{- R^\alpha}.
	\)
	Moreover, due to \cite[Proposition 3.2]{Berger18}, we can choose \(c\) (depending only on \(P\)) in the definition of the set \(\mathcal{S}\) (and the statement of the proposition) such that
	\(
	P(\mathcal{S}^c) \leq \c R^{3d} e^{- R^\alpha}.
	\)
	In summary, we have
	\begin{align*}
	P(G) &= 1 - P(G^c) 
	\geq 1 - P(\mathcal{E}^c) - P(\mathcal{H}^c) - P(\mathcal{S}^c) \geq 1 - \c  R^{3d}  e^{- R^\alpha}.
	\end{align*}
	
	It is left to show that the oscillation inequality \eqref{eq: smsc osc} holds \(P\)-a.e. on \(G\).
	Let \(\omega \in G\cap \mathsf{B}\) and fix \((x, t), (y, s) \in \Theta^p(K_R)\). Furthermore,  let \((Z_n)_{n \in \mathbb{N}}\) and \((Y_n)_{n \in \mathbb{N}}\) be independent walks in \(\omega\) such that \(Z_0 = x\) and \(Y_0 = y\). With abuse of notation, we denote the underlying probability measure by \(P_\omega\).
	Let \(u\) be an \(\omega\)-caloric function on \(\ok_{(c + 3) R}\). 
	Denote 
	\begin{align*}
	\tau &\triangleq \inf(n \in \N \colon (Z_n, n + t) \not \in K_{(c + 3)R}),\\
	\rho &\triangleq \inf(n \in \N\colon (Y_n, n + s) \not \in K_{(c + 3)R}).
	\end{align*}
	Then, 
	\begin{align*}
	u(x, t) - u(y, s) &= E_\omega \big[u(Z_{\tau}, \tau + t) - u(Y_{\rho}, \rho + s)\big]
	\\&= E_\omega \big[\big(u(Z_{\tau}, \tau + t) - u(Y_{\rho}, \rho + s)\big)\1_{\{(Z_{\tau}, \tau + t) \not = (Y_{\rho}, \rho + s)\}}\big]
	\\&\leq \underset{\Theta^p(K_{(c + 3) R})}{\osc} u\  P_\omega((Z_{\tau}, \tau + t) \not = (Y_{\rho}, \rho + s)).
	\end{align*}
	The oscillation inequality follows in case there exists a constant \(C = C(R) > 0\) such that 
	\begin{align}\label{eq: osc sc constant}
	P_\omega((Z_{\tau}, \tau + t) = (Y_{\rho}, \rho + s)) \geq C.
	\end{align}
	\noindent
	\emph{Case 1:} \(x, y \in \mathcal{C}_\omega\). As \(\omega \in \mathcal{E} \cap \mathcal{S}\), we can guide the space-time walks to meet at some point and afterwards to proceed together. 
	Thus, \[\xi^{2 (c + 3)^2 R^2} \leq P_\omega((Z_{\tau}, \tau + t) = (Y_{\rho}, \rho + s)).\]
	\noindent
	\emph{Case 2: \(x \not \in \mathcal{C}_\omega\) or \(y \not \in \mathcal{C}_\omega\)}. In this case we first bring the walks into the sink. Since \(\omega \in \mathcal{H}\), the worst case is that the initial points \(x, y\) are in a hole of the sink of radius \(\lfloor R\rfloor\). Furthermore, using \(\omega\in \mathsf{B}\), the walk can step in direction of the sink with probability at least \(\frac{1}{2 d}\). Consequently, again guiding the walks and using that \(\omega \in \mathcal{E} \cap \mathcal{S}\), we obtain that \[(2d)^{- 2(c + 3)^2 R^2} \xi^{2 (c + 3)^2 R^2} \leq P_\omega((Z_{\tau}, \tau + t)= (Y_{\rho}, \rho + s)).\]
	\noindent 
	Hence, \eqref{eq: osc sc constant} holds with \(C \equiv (2d)^{- 2(c + 3)^2 R^2} \xi^{2 (c + 3)^2 R^2} > 0\). The proof is complete.
\end{proof}
The following is an application of \Cref{prop: osc s c}:
\begin{corollary}\label{coro: TV bound}
	There exist constants \(\alpha > 0,  c \in \mathbb{N}\) such that for all \(R \geq 1\) there is a constant \(C = C(R) \in (0, 1)\) and a set \(G = G(R) \in \mathcal{F}\) with \(P(G) \geq 1 - \c R^{3d} e^{- R^\alpha}\) such that for all \(\omega \in G, p \in \{o, e\}\) 
	\[
	\max_{A \subseteq \pp K_{(c + 3)R}} \underset{\Theta^p(K_R)}{\osc}\ \mathfrak{p}_\omega (A) \leq C,
	\]
	where
	\begin{align*}
	\rho_t &\triangleq \inf(n \in \N \colon (X_n, n + t) \not \in K_{(c + 3)R} ),\qquad 
	\mathfrak{p}_\omega^{(x, t)} (A) \triangleq P_\omega^x ( (X_{\rho_t}, \rho_t + t) \in A). 
	\end{align*}
\end{corollary}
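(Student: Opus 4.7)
The plan is to reduce the corollary directly to \Cref{prop: osc s c}. For each fixed $A \subseteq \pp K_{(c+3)R}$, consider the exit-probability function
\[
u(x,t) \triangleq \mathfrak{p}_\omega^{(x,t)}(A) = P_\omega^x\big((X_{\rho_t}, \rho_t + t) \in A\big), \qquad (x,t) \in \ok_{(c+3)R},
\]
which takes values in $[0,1]$ and equals $\1_A$ on $\pp K_{(c+3)R}$ (since $\rho_t = 0$ there). I would first verify that $u$ is $\omega$-caloric on $K_{(c+3)R}$: for $(x,t) \in K_{(c+3)R}$, the Markov property at time one gives $\rho_t \geq 1$ and
\[
u(x,t) = \sum_{k=1}^{2d} \omega(x,e_k)\, u(x + e_k, t+1) = E_\omega^x\big[u(X_1, t+1)\big],
\]
because from the shifted initial datum $(x+e_k, t+1) \in \ok_{(c+3)R}$ the residual exit time has the same law used to define $u(x+e_k, t+1)$.

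With caloricity in hand, the plan is to take the good set $G = G(R) \in \mathcal{F}$ and the constant $C = C(R) \in (0,1)$ produced by \Cref{prop: osc s c} (as well as the constants $\alpha, c$), and to apply that proposition to $u$ on $G$. This yields
\[
\underset{\Theta^p(K_R)}{\osc}\, u \;\leq\; C \underset{\Theta^p(K_{(c+3)R})}{\osc}\, u \;\leq\; C,
\]
where the last inequality uses the trivial bound $0 \leq u \leq 1$. The decisive point is that $C$ in \Cref{prop: osc s c} depends only on $R$ (and on $d, P$) and not on the particular $\omega$-caloric function plugged in; so the above estimate holds uniformly in $A$. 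Taking the maximum over $A \subseteq \pp K_{(c+3)R}$ gives the claimed inequality, and the probability bound $P(G) \geq 1 - \c R^{3d} e^{-R^\alpha}$ is inherited verbatim.

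I do not expect any real obstacle here: the corollary is essentially a repackaging of \Cref{prop: osc s c} as a uniform total-variation-type statement for hitting distributions, made possible by two structural facts—that exit-probability functions of a Markov chain are automatically caloric with an indicator boundary datum, and that \Cref{prop: osc s c} produces a constant $C$ that is a feature of the environment $\omega$ rather than of any specific solution. The only verification needed beyond quoting \Cref{prop: osc s c} is the one-line Markov computation establishing caloricity.
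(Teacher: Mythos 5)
Your argument matches the paper's own proof, which likewise just observes that $\mathfrak{p}_\omega(A)$ is $\omega$-caloric by the Markov property and then invokes \Cref{prop: osc s c}; the points you make explicit (the one-step Markov computation, the bound $0\le u\le 1$ giving $\osc_{\Theta^p(K_{(c+3)R})}u\le 1$, and the uniformity of $C$ over the choice of $A$) are exactly the tacit ingredients in the paper's one-line proof. No issues.
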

\begin{proof}
	The Markov property of the walk in the environment \(\omega\) yields that \(\mathfrak{p}_\omega (A)\) is \(\omega\)-caloric in \(K_{(c + 3)R}\) and consequently, \Cref{prop: osc s c} implies  the claim.
\end{proof}

\subsection{Multi-Scale Structure}\label{sec: MSS}
Let \(R_o, M, K, N, \varepsilon_o > 0\) be parameters which we will determine later. The constant \(M\) will be taken large (at least 10, say) and \(N, K \in \mathbb{N}\).

Furthermore, let  \(\{\A_1,  \dots, \A_N\}\) be a covering of \(\partial  \K_1\) intersecting only in their boundaries, which are supposed to have no measure, i.e. 
\[
\textup{meas } (\{ x \in \pp \K_1 \colon d (x, \A_i) = d (x, \pp \K_1 \backslash \A_i) = 0 \})= 0.
\] 
Moreover, we assume that 
\begin{align}\label{eq: diam assp}
\forall_{i = 1, \dots, N} \ \exists \hat{z}_i \in \pp \K_1 \colon \A_i \subset \K_{\frac{1}{4 M^2}} (\hat{z}_i).
\end{align}
In the following we will denote space-time points  by \(\hat{x}, \hat{y}, \hat{z},\) etc.
For \(R \geq 1, j \in \{1,  \dots, N\}\), \(\hat{z} \in \mathbb{Z}^d \times \mathbb{Z}_+\) and \(s \in \mathbb{R}_+\) we define 
\begin{align*}
\rho_{s}^{\hat{z}, R}  &\triangleq \inf (t \in \N \colon (X_t, t + s) \not \in K_{R} (\hat{z})),\\
\tau_s^{\hat{z}, R} &\triangleq \inf(t \in \mathbb{R}_+ \colon (X_t, t + s) \not \in  \K_R(\hat{z})),
\end{align*}
and
\begin{align*}
\mathfrak{p}_\omega^{(x, t), \hat{z}, R} (j) &\triangleq P^x_\omega ((X_{\rho_{t}^{\hat{z}, R}}, \rho_t^{\hat{z}, R} + t) \in R A_j ( \hat{z})),\\
\mathfrak{p}_\textup{BM}^{(x, t), \hat{z}, R} (j) &\triangleq \bm^{x}((X_{\tau_{t}^{\hat{z}, R}}, \tau^{\hat{z}, R}_{t} + t) \in R\A_j( \hat{z})),
\end{align*}
where \(R\A_j (x, t) \triangleq \big\{(y,s) \in \pp \K_R( x, t) \colon \big( \frac{y - x}{R}, \frac{s - t}{R^2}\big) \in \A_j\big\}\) and \(R A_j  (\hat{z})\) as in \eqref{eq: A_R}.

\begin{definition} Let \(c \in \mathbb{N}\) and \(C = C(R_o) \in (0, 1)\) be as in \Cref{coro: TV bound}.
	\begin{enumerate}
		\item[\textup{(i)}]
		For \(R \leq R_o\) we say that the cylinder \(K_R(\hat{z})\) is \(\omega\)-\emph{good}, if 
		\[
		\max_{p = o, e} \max \big( \|\mathfrak{p}_\omega^{\hat{x}, \hat{z}, (c + 3)R_o}- \mathfrak{p}_\omega^{\hat{y}, \hat{z}, (c + 3)R_o}\|_{tv} \colon \hat{x}, \hat{y} \in  \Theta^p(K_{R}(\hat{z})) \big) \leq C,
		\]
		where \(\|\cdot\|_{tv}\) denotes the total variation distance.
		\item[\textup{(ii)}]
		For \(R > R_o\) we say that the cylinder \(K_R(\hat{z})\) is \(\omega\)-\emph{good}, if for all \(\hat{x} \in K_R(\hat{z})\)
		\[
		\|\mathfrak{p}_\omega^{\hat{x}, \hat{z}, MR} - \mathfrak{p}_\textup{BM}^{\hat{x}, \hat{z}, MR}\|_{tv} < \varepsilon_o.
		\] 
	\end{enumerate}
\end{definition}
The following lemma shows that in case \(R_o\) is large the probability for a cylinder to be good is high. The lemma follows from \Cref{coro: QE,coro: TV bound}.
\begin{lemma}\label{lem: good prob}
	There exist constants \(R^* = R^*(\A_1, \dots, \A_N, \varepsilon_o, M)\geq 1\) and \(\delta > 0\) such that whenever \(R_o \geq R^*\)
	\begin{align}\label{eq: prob good}
	P\big(\big\{\omega \in \Omega \colon K_R(\hat{z}) \text{ is \(\omega\)-good}\hspace{0.02cm} \big\}\big) \geq 1 - e^{- R^{\delta}} \quad \text{for all } R \geq 1.
	\end{align}
\end{lemma}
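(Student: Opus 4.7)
The plan is to split the claim according to the two regimes in the definition of $\omega$-goodness: the small-scale case $R \leq R_o$ will follow from \Cref{coro: TV bound}, while the large-scale case $R > R_o$ will follow from \Cref{coro: QE}. Since $P$ is a product measure, the distributions of both exit processes are translation invariant, so the probability of the good event for a general $\hat{z}$ equals that for $\hat{z} = 0$; I may therefore assume $\hat{z} = 0$ throughout.

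For $R \leq R_o$, I would apply \Cref{coro: TV bound} at the radius $R_o$, obtaining a set of $P$-probability at least $1 - \c R_o^{3d} e^{-R_o^{\alpha}}$ on which
\[
\max_{A \subseteq \pp K_{(c+3)R_o}}\ \underset{\Theta^p(K_{R_o})}{\osc}\ \mathfrak{p}_\omega(A) \leq C.
\]
Since $\Theta^p(K_R) \subseteq \Theta^p(K_{R_o})$ and the $\{RA_j\}_{j=1}^N$ partition $\pp K_{(c+3)R_o}$, every subset of $\{1,\dots,N\}$ corresponds to a subset $A = \bigcup_{j\in J} RA_j$ of $\pp K_{(c+3)R_o}$. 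Using the identity $\|\mu-\nu\|_{tv} = \sup_A |\mu(A)-\nu(A)|$ on the finite set $\{1,\dots,N\}$, this immediately gives $\|\mathfrak{p}_\omega^{\hat{x},0,(c+3)R_o} - \mathfrak{p}_\omega^{\hat{y},0,(c+3)R_o}\|_{tv} \leq C$ for all $\hat{x},\hat{y} \in \Theta^p(K_R)$.

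For $R > R_o$, I would apply \Cref{coro: QE} individually to each $\A_j$, with parameters $\theta$ slightly larger than $1/M$ (so that $\B_R \subset \B_{\theta MR}$) and tolerance $\varepsilon \triangleq 2\varepsilon_o/N$. The measure-zero-boundary hypothesis \eqref{eq: assp meas zero boundary} is part of the standing covering assumption on $\{\A_j\}$. A union bound over $j=1,\dots,N$ yields a set of $P$-probability at least $1 - N\c_1 e^{-\c_2 (MR)^\delta}$ on which, for every $\hat{x} \in K_R \subseteq K_{\theta MR}$,
\[
\max_{1\leq j\leq N}\big|\mathfrak{p}_\omega^{\hat{x},0,MR}(j) - \mathfrak{p}_{\textup{BM}}^{\hat{x},0,MR}(j)\big| \leq \varepsilon.
\]
Summing yields $\|\mathfrak{p}_\omega^{\hat{x},0,MR} - \mathfrak{p}_{\textup{BM}}^{\hat{x},0,MR}\|_{tv} = \tfrac{1}{2}\sum_{j=1}^N|\cdot| \leq \tfrac{N}{2}\varepsilon = \varepsilon_o$, which is the large-scale goodness condition.

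The only real obstacle is packaging these two bounds into a single inequality of the form $1-e^{-R^\delta}$ valid for \emph{all} $R \geq 1$. For $R \leq R_o$ the bound $\c R_o^{3d} e^{-R_o^{\alpha}}$ is independent of $R$ and must be dominated by $e^{-R^\delta} \geq e^{-R_o^\delta}$, which is arranged by taking $R^*$ large enough and $\delta$ slightly smaller than $\alpha$. For $R > R_o$ the factors $N\c_1$ and the exponent $\c_2 (MR)^\delta$ are absorbed by a further shrinkage of $\delta$ once $R^*$ is sufficiently large in terms of $\A_1,\dots,\A_N$, $\varepsilon_o$, $M$, and the constants from \Cref{coro: TV bound,coro: QE}. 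No new probabilistic input is needed beyond the two corollaries.
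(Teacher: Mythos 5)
Your proposal is correct and follows essentially the same route as the paper, which simply asserts that the lemma "follows from Corollary 4.1 and Corollary 5.2." You have spelled out the details the paper leaves implicit: for $R\leq R_o$ the exit law in the goodness condition lives on $\{1,\dots,N\}$ and each subset of $\{1,\dots,N\}$ pulls back to a subset of $\pp K_{(c+3)R_o}$, so \Cref{coro: TV bound} applied once at scale $R_o$ covers all $R\leq R_o$ via $\Theta^p(K_R)\subseteq\Theta^p(K_{R_o})$; and for $R>R_o$ a union bound over $j$ of \Cref{coro: QE} applied at radius $MR$ with tolerance $2\varepsilon_o/N$ and $\theta$ between $1/M$ and $1$ gives the total-variation bound $\varepsilon_o$, after which both regimes are packaged into a single $1-e^{-R^\delta}$ estimate by shrinking $\delta$ and enlarging $R^*$.
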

In the following, let \(\delta\) be as in \Cref{lem: good prob}.
Our next step is to set up a multi-scale structure. Define 
\[
R_k \triangleq R_o^{K^k}\text{ for } k \in  \N,
\]
and take a constant \[\nu < \frac{\delta}{K}.\]

\begin{definition}
	\begin{enumerate}
		\item[\textup{(i)}]
		A cylinder of radius \(R^2_o\) is called \(\omega\)-\emph{admissible}, if 
		all sub-cylinders of radius \(R_o\) are \(\omega\)-good.
		\item[\textup{(ii)}]
		For \(k \in \mathbb{N}\) a cylinder of radius \(R^2_k\) is called \(\omega\)-\emph{admissible}, if  \begin{enumerate} \item[-] every sub-cylinder of radius \(>  R_{k-1}\) is \(\omega\)-good. \item[-] there are at most \(R_{k}^\nu\) non-\(\omega\)-admissible sub-cylinders of radius \(R^2_{k-1}\).\end{enumerate}
	\end{enumerate}
\end{definition}
\begin{lemma}\label{lem: prob admissible}
	There exists a constant \(R^* = R^*(\A_1, \dots, \A_N, \varepsilon_o, M) \geq 1\) such that for \(R_o \geq R^*\) the following holds: For all \((\hat{z}, k) \in \mathbb{Z}^d \times \N \times \N\)
	\[
	P \big(\big\{\omega  \in \Omega \colon K_{R^2_k}(\hat{z}) \text{ is \(\omega\)-admissible}\hspace{0.02cm}\big\}\big)  \geq  1- e^{- R_k^{\nu/2}}.
	\]
\end{lemma}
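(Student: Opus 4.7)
The plan is to proceed by induction on $k$. For the base case $k=0$, the cylinder $K_{R_o^2}(\hat z)$ contains at most $\c R_o^{2(d+2)}$ sub-cylinders of radius $R_o$; each is $\omega$-good with probability at least $1 - e^{-R_o^\delta}$ by \Cref{lem: good prob}. A union bound together with $\nu/2 < \delta$ yields the claim once $R^*$ is chosen sufficiently large.

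For the inductive step, assume the estimate holds at level $k-1$. I decompose the failure of admissibility of $K_{R_k^2}(\hat z)$ as $A_1 \cup A_2$, where $A_1$ is the event that some sub-cylinder of radius $R \in (R_{k-1}, R_k^2]$ inside $K_{R_k^2}(\hat z)$ fails to be $\omega$-good, and $A_2$ is the event that more than $R_k^\nu$ sub-cylinders of radius $R_{k-1}^2$ fail to be $\omega$-admissible. For $A_1$, the total number of candidate sub-cylinders is polynomially bounded in $R_k$, say by $\c R_k^{C_0}$, and each fails to be good with probability at most $e^{-R_{k-1}^\delta} = e^{-R_k^{\delta/K}}$ by \Cref{lem: good prob}. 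Since $\delta/K > \nu > \nu/2$, a crude union bound gives $P(A_1) \leq \tfrac{1}{2} e^{-R_k^{\nu/2}}$ for $R_o$ large.

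The main content lies in controlling $A_2$. The key observation is that $\omega$-admissibility of $K_{R_{k-1}^2}(\hat z')$ is a function of $\omega$ restricted to an enlargement $K_{\C' R_{k-1}^2}(\hat z')$, where $\C' = \C'(M)$ absorbs the $M$-neighbourhoods used in the definition of $\omega$-good at every nested scale (all such scales are at most $R_{k-1}^2$, so their $M$-enlargements fit inside $K_{(M+1) R_{k-1}^2}(\hat z')$, and the geometric decay $R_{k-j} \ll R_{k-1}$ controls the recursive contribution from deeper levels). Since $P$ is i.i.d., admissibility events of sub-cylinders whose centres are separated by at least $2 \C' R_{k-1}^2$ in the space-time norm are jointly independent. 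I would partition the at most $N = R_k^{O(1)}$ candidate sub-cylinders of radius $R_{k-1}^2$ into $m = m(M, d)$ groups consisting of pairwise well-separated (hence independent) cylinders; on $A_2$ some group contains at least $a \triangleq \lceil R_k^\nu / m\rceil$ non-admissible cylinders. Applying the induction hypothesis inside each group and a standard binomial tail bound gives
\[
P(A_2) \leq m \binom{N}{a} \exp\bigl(- a R_{k-1}^{\nu/2}\bigr) \leq m \exp\bigl(a(\log N - R_{k-1}^{\nu/2})\bigr) \leq \tfrac{1}{2} e^{-R_k^{\nu/2}},
\]
since $\log N = O(\log R_k)$, $R_{k-1}^{\nu/2} = R_k^{\nu/(2K)}$, and $a \asymp R_k^\nu$, so the exponent is of order $-R_k^{\nu(1 + 1/(2K))}$, which beats $-R_k^{\nu/2}$ with room to spare.

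The main obstacle is the dependence bookkeeping in $A_2$: one must identify the correct interaction radius $\C' R_{k-1}^2$ of the recursively defined admissibility predicate and then exploit the i.i.d.\ structure of $P$ through a finite-colour partition into independent families. Once the independence structure is pinned down, both subevents are cleanly controlled by the chosen gap $\nu < \delta/K$ and the fact that the stretched-exponential weight $R_{k-1}^{\nu/2}$ dominates every polylogarithmic factor in $R_k$.
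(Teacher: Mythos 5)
Your proposal follows the same structure as the paper's proof: induction on $k$, decomposition of the failure of admissibility into the two events (some sub-cylinder of radius $>R_{k-1}$ fails to be good; too many sub-cylinders of radius $R_{k-1}^2$ fail to be admissible), a crude union bound for the first, and a partition into independent families plus a binomial tail bound for the second. The tail bound $\binom{N}{a}p^a \leq (Np)^a$ you invoke is precisely the content of the paper's Lemma~\ref{lem: binom}, and the final exponent computation matches the paper's.

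Your treatment of the independence step is actually more explicit than the paper's: you correctly identify $\mathsf{C}'(M)R_{k-1}^2$ as the dependency radius of the admissibility predicate and observe that the recursive contributions from the scales $R_{k-2},R_{k-3},\dots$ are swallowed by geometric decay, which is what makes the i.i.d.\ structure of $P$ usable. The one point I would press you on is the assertion that the partition can be taken with a \emph{constant} number $m=m(M,d)$ of colours. That holds if the sub-cylinders of radius $R_{k-1}^2$ are taken on a grid of mesh $\sim R_{k-1}^2$, which is the natural convention given that $\omega$-admissibility is, in effect, a function of the spatial centre. But if one indexes sub-cylinders by arbitrary lattice centres in $K_{R_k^2}$, a well-separated colouring needs on the order of $R_{k-1}^{2(d+2)}=R_k^{2(d+2)/K}$ colours, and then $a=\lceil R_k^\nu/m\rceil$ degenerates to $1$ for the small $\nu<\delta/K$ actually in force, leaving only the exponent $-R_{k-1}^{\nu/2}=-R_k^{\nu/(2K)}$, which does \emph{not} beat $-R_k^{\nu/2}$. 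The paper's phrasing ``$\rho \leq$ polynomial of $R_k$'' carries the same ambiguity (and, taken literally, is not sufficient); both you and the paper implicitly rely on the coarse-grid reading, and it would be worth making that convention explicit.
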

\begin{proof}
	We use induction. For \(k = 0\) the claim  follows from  \Cref{lem: good prob} and a union bound.
	For the  induction step, assume that the claim holds for \(k \in \N\). 
	Denote 
	\begin{align*}
	A &\triangleq \big\{\omega \in  \Omega \colon \text{in \(K_{R^2_{k}}(\hat{z})\) there exists a sub-cylinder }\\&\hspace{5cm} \text{of radius \(>  R_{k-1}\) which is not \(\omega\)-good}\big\},\\
	B &\triangleq \big\{ \omega \in \Omega \colon  \text{in \(K_{R^2_k}(\hat{z})\) there are more than \(R_{k}^\nu\)}\\&\hspace{5cm} \text{non-\(\omega\)-admissible sub-cylinders of radius \(R^2_{k-1}\)} \big\}.
	\end{align*}
	Due to \Cref{lem: good prob}, each sub-cylinder with radius \(> R_{k-1}\) is bad with probability less than
	\(e^{ - R_{k - 1}^\delta} \leq e^{- R_k^{\nu}}.
	\) 
	Thus, due to a union bound, 
	we obtain for \(R_o\) large enough that 
	\[
	P(A) \leq \tfrac{1}{2} e^{- R_k^{\nu/2}}.
	\]
	We denote by \(\mathsf{A}(\hat{x}, R)\) the set of all \(\omega \in \Omega\) such that \(K_R (\hat{x})\) is \(\omega\)-admissible.
	To estimate \(P(B)\), we partition the cylinder \(K_{R_k^2}\) in \(\rho \leq \text{polynomial of } R_k\) subsets \(\{U_1, \dots, U_{\rho}\}\) such that \(\mathsf{A}(\hat{x}, R^2_{k - 1})\) and \(\mathsf{A}(\hat{y}, R^2_{k - 1})\) are independent for all \(\hat{x}, \hat{y} \in U_i,  i = 1, \dots, \rho.\)
	For \(i = 1, \dots, \rho\) we have
	\begin{align*}
	Z_i \triangleq \sum_{\hat{x} \in U_i} \big(1 - \1_{\mathsf{A}(\hat{x}, R^2_{k-1})}\big) &\sim \textup{bin} (|U_i|, 1 - P(\mathsf{A}(0, R^2_{k- 1})) \leq_\textup{st} \textup{bin} (|U_i|, e^{- R^{\nu/2}_{k-1}}),
	\end{align*}
	where \(\leq_\textup{st}\) denotes the usual stochastic order.
	Note the  following:
	\begin{lemma}\label{lem: binom}
		For \(n \in \mathbb{N}\), let \(S_1, \dots, S_n\) be i.i.d. Bernoulli random variables with parameter \(p \in (0, 1)\). Then, for all \(k \in [n]\)
		\[
		P(S_1 + \dots + S_n \geq k) \leq (np)^k.
		\]
	\end{lemma}
	\begin{proof}
		We use induction over \(k \in [n]\). For \(k = 0\) the claim is obvious. Assume the claim holds for \(0 \leq k < n\). Then, 
		\begin{align*}
		P(S_1 + \dots + S_n \geq k + 1) &= P(S_1 + \dots + S_n \geq k + 1, \exists_{m \leq n} \colon S_m = 1)
		\\&\leq \sum_{m = 1}^n P(S_1 + \dots + S_n - S_m \geq k, S_m = 1)		
		\\&= \sum_{m = 1}^n P(S_1 + \dots + S_{n - 1} \geq k) P(S_m = 1)\\
		&\leq \sum_{m = 1}^n P(S_1 + \dots + S_n \geq k) p
		\\&\leq \sum_{m = 1}^n (np)^k p = (np)^{k+1}.
		\end{align*}
		The proof is complete.
	\end{proof}
	Using \Cref{lem: binom} and Chebyshev's inequality, we obtain that 
	\begin{align*}
	P (B) &\leq \sum_{i = 1}^\rho P(Z_i > \rho^{-1} R_{k}^{\nu}) 
	\leq \rho (2 R_k)^{2(d + 2) \rho^{-1} R^{\nu}_{k}} e^{- \rho^{-1} R^\nu_{k} R^{\nu/2}_{k - 1}}
	\\&= \rho e^{\rho^{-1} R^{\nu}_{k} (\log (2 R_k) 2 (d + 2) - R^{\nu/2}_{k - 1})}
	\leq \tfrac{1}{2} e^{- R_k^{\nu/2}},
	\end{align*}
	provided \(R_o\) is sufficiently large. 
	We conclude that 
	\(
	P(A \cup B) \leq e^{- R_k^{\nu/2}}.
	\)
	The proof is complete.
\end{proof}

\subsection{The Coupling}
We use the notation from \Cref{sec: MSS}.
\subsubsection{Definition}\label{sec: coupling def}
In this section we define a coupling, which \emph{success} will prove the oscillation inequality. 
We define the coupling via a (random) sequence \[\{\hat{x}^{(m)}, \hat{y}^{(m)}, \hat{z}^{(m)}, R^{(m)}, Y^{(m)}, Z^{(m)} \colon m \in \N\}.\]
The starting point is a so-called \emph{basic coupling}: 
For \(\hat{x} \in \mathbb{Z}^d\times  \N, R \geq 1\), \(\hat{y} = (\hat{y}_1, \hat{y}_2), \hat{z} = (\hat{z}_1, \hat{z}_2) \in K_R(\hat{x})\), let \(\mathfrak{q}_\omega^{(\hat{x}, R, \hat{y}, \hat{z})}\) be a Borel probability measure on the product space \((\mathbb{Z}^d \times \N) \times (\mathbb{Z}^d \times \N) \times D(\N, \mathbb{Z}^d) \times D(\N, \mathbb{Z}^d)\) such that the generic element \((\hat{Z}^1, \hat{Z}^2, X^1, X^2)\) is sampled as follows:
\begin{enumerate}
	\item[-] If \(R > R_o\), then \(X^1\) and \(X^2\) are two walks in  \(\omega\) starting at \(\hat{y}_1\) and \(\hat{z}_1\) respectively, such that the probability of \((X^1_n, \hat{y}_2 + n)_{n \in \N}\) and \((X^2_n, \hat{z}_2 + n)_{n \in \N}\) leaving \(K_{MR}(\hat{x})\) in the same element of \(\{RM A_1 (\hat{x}), \dots, RMA_N (\hat{x})\}\) is maximized. Moreover, \(\hat{Z}^1\) and \(\hat{Z}^2\) are the points where \((X^1_n, \hat{y}_2 + n)_{n \in \N}\) and \((X^2_n,  \hat{z}_2 + n)_{n \in \N}\) leave \(K_{MR}(\hat{x})\).
	\item[-] If \(R \leq R_o\), then \(X^1\) and \(X^2\) are two walks in  \(\omega\) starting at \(\hat{y}_1\) and \(\hat{z}_1\) respectively, such that the probability of \((X^1_n, \hat{y}_2 + n)_{n \in \N}\) and \((X^2_n, \hat{z}_2 + n)_{n \in \N}\) leaving \(K_{(c + 3)R_o}(\hat{x})\) in the same point is maximized. Moreover, \(\hat{Z}^1\) and \(\hat{Z}^2\) are the points where \((X^1_n, \hat{y}_2 + n)_{n \in \N}\) and \((X^2_n, \hat{z}_2 + n)_{n \in \N}\) leave \(K_{(c + 3)R_o}(\hat{x})\).
\end{enumerate}
Before we turn to the main coupling, let us explain that on good cylinders there is a reasonable probability that the walks leave a cylinder in the same region or point. 
\begin{lemma} \label{lem: suc of bc on good cyl}
	Take \(\hat{x} \in \mathbb{Z}^d \times  \N, R \geq 1\) and \(\hat{y}, \hat{z} \in K_R(\hat{x})\) of the same parity and assume that \(\omega \in \Omega\) is such that \(K_R(\hat{x})\) is \(\omega\)-good.
	\begin{enumerate}
		\item[\textup{(i)}]
		There exist two constant \(\c_1, \c_2 > 0\) only depending on the dimension \(d\) and the covariance matrix \(\a\) such that in case \(R > R_o\)
		\begin{align*}
		\mathfrak{q}_\omega^{(\hat{x}, R, \hat{y}, \hat{z})} \big(\exists \hat{v} \in \pp K_{RM} (\hat{x})\colon \hat{Z}^1, \hat{Z}^2 \in K_{RM^{-1}}(\hat{v})\big) 
		> 1 - \c_1 M^{- \c_2} - 2 \varepsilon_o.
		\end{align*}
		\item[\textup{(ii)}] If \(R \leq R_o\), then 
		\[
		\mathfrak{q}_\omega^{(\hat{x}, R, \hat{y}, \hat{z})} \big(\hat{Z}^1 = \hat{Z}^2\big) > 1 - C,
		\]
		where \(C \in (0, 1)\) is as in the definition of the good cylinder, see \Cref{coro: TV bound}.
	\end{enumerate}
\end{lemma}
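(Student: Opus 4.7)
The plan is to handle the two statements separately, reducing each to a maximal-coupling computation combined with the appropriate definition of $\omega$-good.

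Part~(ii) is immediate: by construction, $\mathfrak{q}_\omega^{(\hat x, R, \hat y, \hat z)}$ is a maximal coupling of the exit measures $\mathfrak{p}_\omega^{\hat y,\hat x,(c+3)R_o}$ and $\mathfrak{p}_\omega^{\hat z,\hat x,(c+3)R_o}$ on $\pp K_{(c+3)R_o}(\hat x)$, so
\[
\mathfrak{q}_\omega^{(\hat x, R, \hat y, \hat z)}(\hat Z^1 = \hat Z^2) = 1 - \|\mathfrak{p}_\omega^{\hat y,\hat x,(c+3)R_o} - \mathfrak{p}_\omega^{\hat z,\hat x,(c+3)R_o}\|_{tv}.
\]
Since $\hat y, \hat z$ have the same parity and $K_R(\hat x)$ is $\omega$-good in the small-scale sense, the definition bounds the TV on the right by $C$, giving the claim.

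For part~(i), the basic coupling is designed to maximize the probability that $\hat Z^1$ and $\hat Z^2$ fall into a common partition element $RM\A_j(\hat x)$. Unpacking \eqref{eq: diam assp}, each $RM\A_j(\hat x)$ has spatial diameter at most $R/(2M)$ and temporal diameter at most $(R/M)^2$, so for each $j$ one can pick $\hat v_j \in \pp K_{MR}(\hat x)$ with $RM\A_j(\hat x) \subset K_{RM^{-1}}(\hat v_j)$. Therefore
\[
\mathfrak{q}_\omega^{(\hat x, R, \hat y, \hat z)}\bigl(\exists \hat v \in \pp K_{MR}(\hat x) \colon \hat Z^1, \hat Z^2 \in K_{RM^{-1}}(\hat v)\bigr) \geq 1 - \|\mathfrak{p}_\omega^{\hat y,\hat x,MR} - \mathfrak{p}_\omega^{\hat z,\hat x,MR}\|_{tv},
\]
where the TV is between the discrete measures on $\{1,\dots,N\}$. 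Inserting the Brownian exit measures $\mathfrak{p}_\textup{BM}^{\hat w,\hat x,MR}$ and applying the triangle inequality, the two ``homogenization'' contributions $\|\mathfrak{p}_\omega^{\hat y} - \mathfrak{p}_\textup{BM}^{\hat y}\|_{tv}$ and $\|\mathfrak{p}_\omega^{\hat z} - \mathfrak{p}_\textup{BM}^{\hat z}\|_{tv}$ are each at most $\varepsilon_o$ by the large-scale $\omega$-good property, since $\hat y, \hat z \in K_R(\hat x)$.

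It then remains to bound $\|\mathfrak{p}_\textup{BM}^{\hat y,\hat x,MR} - \mathfrak{p}_\textup{BM}^{\hat z,\hat x,MR}\|_{tv}$ by a constant times $M^{-\c_2}$. Using the variational form of total variation, this equals $\sup_I |g_I(\hat y) - g_I(\hat z)|$, where $I$ ranges over subsets of $\{1,\dots,N\}$ and
\[
g_I(\hat w) \triangleq \bm^{w_1}\bigl((X_{\tau^{\hat x, MR}_{w_2}}, \tau^{\hat x, MR}_{w_2}+w_2) \in \textstyle\bigcup_{j\in I} MR\A_j(\hat x)\bigr).
\]
For every $I$, $g_I$ is a $[0,1]$-valued solution of the backward heat equation with covariance $\a$ on the space-time cylinder $\K_{MR}(\hat x)$. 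Classical interior parabolic H\"older regularity, with exponent $\alpha = \alpha(d, \a) > 0$, then yields $|g_I(\hat y) - g_I(\hat z)| \leq \c\, M^{-\alpha}$ uniformly in $I$, since $\hat y, \hat z$ both lie in $K_R(\hat x) \subset K_{MR}(\hat x)$, a sub-cylinder at relative scale $M^{-1}$. Assembling the estimates gives the claim with $\c_2 = \alpha$. The main technical point I foresee is precisely this uniformity in $I$: na\"\i vely summing the H\"older bound over single-index $I = \{j\}$ would pick up a factor $N$ that is absent from the stated constants, whereas the $\sup_I$ formulation of TV reduces matters to H\"older regularity for one caloric function at a time.
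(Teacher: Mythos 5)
Your proposal is correct and follows essentially the same route as the paper: reduce to a total-variation bound via the maximal-coupling characterization and the diameter assumption \eqref{eq: diam assp}, split the TV into two homogenization errors (each $\leq \varepsilon_o$ by the large-scale good property) plus a Brownian term, and control the Brownian term by interior parabolic H\"older regularity for the backward heat equation (the paper cites \cite[Theorem 6.28]{Lieberman96}, you invoke the same classical input). For part~(ii) both arguments are identical one-liners from the coupling characterization of TV and the small-scale good definition.

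The one place where your write-up is sharper than the paper's: the paper estimates $|\mathfrak{p}_\textup{BM}^{\hat y,\hat x,MR}(k) - \mathfrak{p}_\textup{BM}^{\hat z,\hat x,MR}(k)|$ for each fixed index $k$ and then asserts the TV bound, which as written would pick up a spurious factor of $N$. You correctly observe that passing to the variational form $\sup_I |g_I(\hat y) - g_I(\hat z)|$ over all subsets $I \subseteq \{1,\dots,N\}$ resolves this, since $g_I$ is again a $[0,1]$-valued caloric function and the H\"older estimate applies uniformly. This is clearly the intended reading of the paper's argument (the same computation works verbatim with $\A_k$ replaced by $\bigcup_{j\in I}\A_j$), so it is an expository repair rather than a genuinely different approach, but it is the correct way to make the step rigorous with constants depending only on $d$ and $\a$.
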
 
\begin{proof}
	(i). The proof is based on the relation of oscillation, total variation and couplings:
	In view of \cite[Proposition 4.7, Remark 4.8]{levin2009markov} and of assumption \eqref{eq: diam assp}, it suffices to show that 
	\[
	\| \mathfrak{p}_\omega^{\hat{y}, \hat{x}, MR} - \mathfrak{p}^{\hat{z}, \hat{x}, MR}_\omega\|_{tv} \leq \c_1M^{- \c_2} + 2 \varepsilon_o.
	\]
	Take \(k \in \{1, \dots, N\}\).
	Since \(K_R(\hat{x})\) is \(\omega\)-good, we have
	\begin{align}
	|\mathfrak{p}_\omega^{\hat{y}, \hat{x},MR} (k) - \mathfrak{p}^{\hat{z}, \hat{x},MR}_\omega (k)| &\leq |\mathfrak{p}_\omega^{\hat{y}, \hat{x}, MR} (k) - \mathfrak{p}_\textup{BM}^{\hat{y}, \hat{x}, MR} (k)| + |\mathfrak{p}^{\hat{z}, \hat{x},MR}_\omega (k) - \mathfrak{p}_\textup{BM}^{\hat{z}, \hat{x}, MR} (k)| \nonumber
	\\&\hspace{1.5cm} + |\mathfrak{p}^{\hat{y}, \hat{x},MR}_\textup{BM} (k) - \mathfrak{p}_\textup{BM}^{\hat{z}, \hat{x}, MR} (k)| \nonumber
	\\&\leq 2 \varepsilon_o + |\mathfrak{p}^{\hat{y}, \hat{x},MR}_\textup{BM} (k) - \mathfrak{p}_\textup{BM}^{\hat{z}, \hat{x}, MR} (k)|. \label{eq: triangle and good}
	\end{align}
	Furthermore, because \(\hat{v} \mapsto u(\hat{v})\triangleq \mathfrak{p}^{\hat{v}, \hat{x}, MR}_\textup{BM}\) solves the (backward) heat equation \(\frac{d}{dt} u + \tfrac{1}{2} \text{tr} (\a\nabla^2u) = 0\) on \(\K_{MR}(\hat{x})\), \cite[Theorem 6.28]{Lieberman96} yields the existence of two constants \(\c_1, \c_2 > 0\) such that 
	\begin{align*}
	|\mathfrak{p}^{\hat{y}, \hat{x},MR}_\textup{BM} (k) - \mathfrak{p}_\textup{BM}^{\hat{z}, \hat{x}, MR} (k)| \leq \c_1 M^{- \c_1}.
	\end{align*}
	Together with \eqref{eq: triangle and good}, we conclude (i).
	
	(ii). This follows from  \cite[Proposition 4.7, Remark 4.8]{levin2009markov} and the definition of a good cylinder.
\end{proof}
We can (and will) take \(M\) and \(\varepsilon_o\) such that 
\begin{align}\label{eq: 2/3 cond}
1 - \c_1 M^{- \c_2} - 2 \varepsilon_o \geq \tfrac{2}{3}.
\end{align}
In other words, on good cylinders the coupling is \emph{successful} (in some sense) with a reasonable probability.

From now on we fix \(R > R_o, \omega \in \Omega\) and two points \(\hat{y}, \hat{z} \in K_R\) of the same parity. The following are the initial values:
\begin{enumerate} 
	\item[-] \(R^{(0)} \triangleq R\).
	\item[-] \(\hat{x}^{(0)} \triangleq (0, 0)\).
	\item[-] sample \((\hat{y}^{(0)}, \hat{z}^{(0)}, Y^{(0)}, Z^{(0)})\) according to \(\mathfrak{q}_\omega^{0, R, \hat{y}, \hat{z}}\).
\end{enumerate}
Now, we proceed inductively. Namely, once the \(m^{\text{th}}\) element of the  sequence is fixed, we generate the \((m + 1)^{\text{th}}\) element as follows: Set \(R^{(m)}\) and \(\hat{x}^{(m)}\) according to the following rule:
\begin{enumerate}
	\item[-] \emph{Case 1}: \(R^{(m-1)} > R_o\). If there exists a point \(\hat{v}\) in the boundary of the cylinder \(K_{MR^{(m - 1)}}(\hat{x}^{(m - 1)})\) such that \(\hat{y}^{(m-1)}, \hat{z}^{(m-1)} \in  K_{M^{-1}R^{(m-1)}}(\hat{v})\), set \(R^{(m)} \equiv M^{-1}R^{(m-1)}\) and \(\hat{x}^{(m)} \equiv \hat{v}\). Otherwise, take \(R^{(m)} \equiv M R^{(m-1)}\) and \(\hat{x}^{(m)} \equiv \hat{x}^{(m-1)}\).
	\item[-] \emph{Case 2}: \(R^{(m-1)} \leq R_o\). We set \(R^{(m)} \equiv (c + 3) R_o\) and \(\hat{x}^{(m)} \equiv \hat{x}^{(m - 1)}\).
\end{enumerate}
Then, sample \((\hat{y}^{(m)}, \hat{z}^{(m)}, Y^{(m)}, Z^{(m)})\) according to \(\mathfrak{q}_\omega^{(\hat{x}^{(m)}, R^{(m)}, \hat{y}^{(m-1)}, \hat{z}^{(m-1)})}\).
Finally, let \(Y\) and \(Z\) be the walks in \(\omega\) that are obtained from \(Y^{(m)}\) and \(Z^{(m)}\) by pasting.
To simplify our notation, we denote the probability measure underlying the coupling by \(\q\).

\subsubsection{A Technical Lemma}
Fix \(k \in \N, \hat{x} \in \mathbb{Z}^d \times \N\) and let \(R_{k} < R  \leq R_{k + 1}\). Further, define two stopping times:
\begin{align*}
T &\triangleq \inf \big(m \in \mathbb{N} \colon R^{(m)} \leq R_k\big),\\
S &\triangleq \inf \big(m \in \mathbb{N} \colon R^{(m)} \geq R^{2}_{k + 1} \text{ or } \hat{x}^{(m)} \not \in K_{R^2_{k + 1}/2}(\hat{x})\big).
\end{align*}
\begin{remark}\label{rem: only good cylinder}
	If  \(\omega \in \mathsf{A}(\hat{x}, R^{2}_{k + 1})\), then till \(T \wedge S\) the coupling only sees \(\omega\)-good cylinders.
\end{remark}
\begin{lemma}\label{lem: heat kernel}
	\begin{enumerate}
		\item[\textup{(i)}] There exist constants \(\theta, \C > 0\) such that if \(\omega \in \mathsf{A}(\hat{x}, R^{2}_{k + 1})\), then
		\[\q (S < T) \leq \C R_{k}^{- \theta K}.\]
		\item[\textup{(ii)}]
		If \(R_{k + 1} M^{-1} \leq R\) there exist constants \(\rho, \C > 0\) such that if \(\omega \in \mathsf{A}(\hat{x}, R^{2}_{k + 1})\), then for all \(\hat{z} \in K_{R^{2}_{k + 1}}(\hat{x})\)
		\[
		\q (\hat{x}^{(T)} \in K_{R_k}(\hat{z})) \leq \C R_k^{- \rho K}.
		\]
	\end{enumerate}
\end{lemma}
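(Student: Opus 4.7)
Both parts begin from the observation in Remark 2.20: on $\{\omega \in \mathsf{A}(\hat{x}, R^2_{k+1})\}$, every cylinder the coupling inspects before time $T \wedge S$ is $\omega$-good. By \Cref{lem: suc of bc on good cyl}(i) together with the choice of $M$ and $\varepsilon_o$ made in \eqref{eq: 2/3 cond}, the basic coupling then succeeds with conditional probability at least $2/3$ at each such step. Hence the process $N^{(m)} := \log_M R^{(m)}$ behaves, up to $T \wedge S$, as a random walk on $\mathbb{Z}$ with downward bias at least $1/3$: a successful step decreases $N^{(m)}$ by $1$ and an unsuccessful step increases it by $1$.

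For (i), I would split $\{S < T\}$ according to the running maximum $\mathcal{M} := \sup_{m \leq T \wedge S} N^{(m)}$. A standard gambler's-ruin computation applied to the biased walk $N^{(m)}$ (with lower absorbing barrier at $\log_M R_k$) yields $\mathfrak{q}(\mathcal{M} \geq \tfrac{3}{2}\log_M R_{k+1}) \lesssim R_k^{-(K/2)\log_M 2}$, which controls the branch of $\{S < T\}$ on which $R^{(m)} \geq R^2_{k+1}$. On the complementary event $\{\mathcal{M} < \tfrac{3}{2}\log_M R_{k+1}\}$ we have $R^{(m)} \leq R_{k+1}^{3/2}$ throughout; combined with an exponential tail bound for the hitting time of a biased walk (so that $T \leq C K^{k+1} \log R_o$ outside an event of probability $\leq R_k^{-cK}$), the total drift $\|\hat{x}^{(T)} - \hat{x}^{(0)}\|$ is at most $O(M K^{k+1} R_{k+1}^{3/2} \log R_o) \ll R^2_{k+1}/2$ once $R_o$ is large enough, ruling out the spatial branch of $\{S < T\}$. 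Taking $\theta$ to be the minimum of the exponents appearing concludes (i).

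For (ii) I would restrict to the event $\{T \leq S\}$ from (i), whose complement contributes $O(R_k^{-\theta K})$ to the desired bound. Since $R \geq R_{k+1}/M$, the coupling performs at least $(K-1)\log_M R_k - O(1)$ net down-steps before time $T$. The core observation is that each successful down-step spreads out the new centre: on an $\omega$-good cylinder of radius $R^{(m-1)}$, \Cref{coro: QE} (applied to a sufficiently fine partition $\A_1,\dots,\A_N$ of $\pp \K_1$) shows that the joint exit distribution of the coupled walks from $K_{MR^{(m-1)}}(\hat{x}^{(m-1)})$ is $\varepsilon_o$-close in total variation to the Brownian caloric measure, and classical Gaussian estimates give a density for the latter uniformly bounded by $\mathsf{c}(MR^{(m-1)})^{-(d+1)}$ on the lateral parabolic boundary away from the corners. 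Consequently, the conditional probability that $\hat{x}^{(m)}$ falls in any fixed sub-cylinder of radius $R^{(m)} = M^{-1}R^{(m-1)}$ is at most $\mathsf{c} M^{-2(d+1)} + 2\varepsilon_o$. Iterating along the net down-steps and summing over possible first-entry locations in $K_{R_k}(\hat{z})$ gives $\mathfrak{q}(\hat{x}^{(T)} \in K_{R_k}(\hat{z})) \leq (\mathsf{c} M^{-2(d+1)} + 2\varepsilon_o)^{(K-1)\log_M R_k} + O(R_k^{-\theta K}) \leq \mathsf{c}' R_k^{-\rho K}$ for a suitable $\rho > 0$.

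The main obstacle lies in part (ii): the up-steps of $N^{(m)}$ (failed basic couplings) interrupt the clean hierarchical picture, so one must show they can be absorbed into the exponential factor --- either by pairing each up-step with the next down-step at the same level via a Markovian decomposition of the scale walk, or by controlling the expected number of visits to each level directly using the downward bias. Secondarily, the Gaussian density bound is only uniform away from the parabolic corners of $\K_{MR^{(m-1)}}(\hat{x}^{(m-1)})$, so a separate and simpler estimate is needed to dismiss the contribution of sub-cylinders sitting near a corner.
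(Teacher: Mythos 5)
Your part (i) is essentially correct and parallels the paper's argument: both split $\{S<T\}$ into a ``radius blow-up'' branch and a ``spatial drift'' branch, use the dominating biased scale walk $(\xi_m)$ to control the first (the paper via \cite[Lemma 4.10]{Berger18}, you via gambler's ruin --- same bound), and then control the drift on the complementary event. The paper uses the first moment $E^\q[S\wedge T]\leq 3\lceil(\log R-\log R_k)/\log M\rceil$ plus Chebyshev instead of your exponential tail bound on $T$, but either route closes the estimate once $R_o$ is large.

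For part (ii), however, there is a genuine gap, and it is precisely at the point you flag as ``the main obstacle.'' The paper resolves the up-step problem with a specific device you do not supply: a \emph{renewal} structure on the dominating walk. They call $m$ a renewal if $\xi_{m+1}-\xi_m=-1$ and $\sum_{j\geq m+2} M^{\xi_j-\xi_{m+1}}<1$, i.e.\ the scale walk descends at $m$ and the tail of its exponential never exceeds $1$. The decisive consequence is that on $\mathcal{R}(m)\cap\{m+2\leq T\leq S\}$ the \emph{entire} remaining displacement is $\|x^{(T)}-x^{(m+1)}\|_2\leq 4R^{(m)}$, so $\hat{x}^{(T)}\in K_{R_k}(\hat{z})$ forces $\hat{x}^{(m+1)}$ to lie within $6R^{(m)}$ of $\hat{z}$ --- an event of probability at most a fixed $p<1$ by the good-cylinder TV bound and the spread of the BM exit law. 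They then need a large-deviation lower bound on the \emph{density} of renewals (their \Cref{lem: LDE}: a positive fraction $\kappa$ of the first $n$ indices are renewals with probability $\geq 1-e^{-vn}$), and the product $p^{\kappa n}$ delivers the polynomial decay. Your two proposed fixes do not recover this structure: ``pairing each up-step with the next down-step'' lacks the key \emph{never-returns} property that makes the displacement bound localize $\hat{x}^{(T)}$ near $\hat{x}^{(m+1)}$, and a first-moment bound on level visits gives only a linear --- not exponential-in-$n$ --- gain. Two further mismatches, both inessential but worth noting: the paper does not need your heat-kernel density bound near the parabolic corners (a crude per-renewal $p<1$ suffices and is much cheaper to justify), and ``summing over possible first-entry locations in $K_{R_k}(\hat{z})$'' would introduce a polynomial volume factor that the paper avoids entirely by targeting the renewal position $\hat{x}^{(m+1)}$ rather than the entry time into the target cylinder.
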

\begin{proof}
	(i). Note that 
	\begin{align}
	\q(S < T) &= \q \Big(\Big\{\max_{m < T} R^{(m)} \geq R^{2}_{k + 1} \text{ or } \exists_{m < T}\colon \hat{x}^{(m)} \not \in K_{R^{2}_{k + 1}/2}(\hat{x})\Big\} \cap \Big\{S < T\Big\}\Big) \nonumber
	\\&\leq \q \Big(\max_{m < S \wedge T} R^{(m)} \geq R^{2}_{k + 1}\Big) + \q \Big( \exists_{m  < S \wedge T}\colon \hat{x}^{(m)} \not \in K_{R^{2}_{k + 1}/2}(\hat{x})\Big). \label{eq: to control}
	\end{align}
	In view of \Cref{rem: only good cylinder}, \cite[Lemma 4.10]{Berger18} yields that 
	\[
	\q \Big(\max_{m < S \wedge T} R^{(m)} \geq R^2_{k + 1}\Big) \leq 2^{- \frac{K \log(R_{k})}{\log (M)}} \equiv R_{k}^{- \theta' K} \text{ with } \theta' \triangleq \frac{\log(2)}{\log(M)}.
	\]
	To control the second term in \eqref{eq: to control}, we first consider the process
	\[
	L_m \triangleq \frac{\log (R^{(m)}) - \log (R)}{\log (M)}, \quad m \leq S \wedge T.
	\]
	Note that \((L_m)_{m \leq S \wedge T}\) has step size one and that it steps down with a probability larger than \(\frac{2}{3}\), see \Cref{lem: suc of bc on good cyl}, \Cref{rem: only good cylinder} and \eqref{eq: 2/3 cond}. Consequently, \((L_m)_{m \leq S \wedge T}\) is stochastically dominated by a biased random walk which steps down with probability \(\frac{2}{3}\). This means that there exists a sequence of i.i.d. random variables \(\xi_1, \xi_2 - \xi_1, \dots\) such that \(\q (\xi_{1} = 1) = 1 - \q(\xi_1 = - 1) = \frac{1}{3}\) and \(\q\)-a.s. on \(\{m \leq S \wedge T\}\)
	\begin{align}\label{eq: coupling L xi}
	L_{m + 1} - L_m \leq \xi_{m + 1} - \xi_m.
	\end{align}
	\begin{lemma}\label{lem: exp S wedge T}
		\[
		\E^\q \big[S \wedge T\big] \leq 3 \Big \lceil \frac{\log (R) - \log (R_k)}{\log (M)} \Big \rceil .
		\]
	\end{lemma}
	\begin{proof}
		We set 
		\[
		\tau_{a} \triangleq \inf (m \in  \mathbb{N} \colon \xi_m \leq - a),\quad a \in \N,
		\]
		It is well-known that 
		\(
		\E^\q[ \tau_a ] = 3 a, 
		\)
		which follows from the fact that \((\xi_m + \frac{m}{3})_{m \in \N}\) is a martingale and the optional stopping theorem.
		Now, set 
		\[
		a \triangleq \Big \lceil \frac{\log (R) - \log(R_k)}{\log (M)} \Big \rceil,
		\]
		and note that \(\q\)-a.s. 
		\(
		S \wedge T = S \wedge T \wedge \tau_a \leq \tau_a.
		\)
		The claim follows.
	\end{proof}
	
	Next, set 
	\[
	B \triangleq \Big\{ \max_{m < S \wedge T} R^{(m)} < R_{k + 1}^{3/2} \Big\}.
	\]
	Using again \cite[Lemma 4.10]{Berger18} yields that 
	\[
	\q(B^c) \leq R_k^{- \theta' K/2}.
	\]
	Denote \(\hat{x}^{(m)} = (x^{(m)}, t^{(m)})\) and \(\hat{x}= (x, t)\).
	By \Cref{lem: exp S wedge T} and Chebyshev's inequality, we obtain
	\begin{align*}
	\q \big(\exists_{m < S \wedge T} \colon\|x^{(m)} - x\|_2 \geq R^{2}_{k + 1}/2, B\big) &\leq \frac{2}{R^{2}_{k + 1}}\E^{\q} \Big[ \sum_{n < S \wedge T} \|x^{(n)} - x^{(n - 1)}\|_2 \1_B \Big]
	\\&\leq \frac{2}{R^{2}_{k + 1}} \E^{\q} \Big[ \sum_{n < S \wedge T} 2M R^{(n - 1)} \1_B \Big]
	\\&\leq \frac{\C}{R_{k + 1}^{1/2}} \E^{\q} \big[S \wedge T\big]
	\\&\leq \frac{\C}{R_{k + 1}^{1/2}} \frac{\log (R_{k + 1})}{\log (M)} 
	\\&\leq \C R_{k}^{-K/8}.
	\end{align*}
	Similarly, we obtain that 
	\begin{align*}
	\q \big(\exists_{m < S \wedge T} \colon |t^{(m)} - t| \geq R_{k + 1}^4/4, B\big) 
	\leq \C R_{k}^{- K/8}.
	\end{align*}
	In summary, we have
	\begin{align*}
	\q \big(\exists_{m < S \wedge T} \colon \hat{x}^{(m)} \not \in K_{R^{2}_{k + 1}/2}(\hat{x})\big) 
	\leq \C R_{k}^{- \theta K},
	\end{align*}
	for some suitable \(\theta > 0\). 
	We proved part (i).
	
	(ii). 
	Using (i) we see that 
	\begin{align*}
	\q (\hat{x}^{(T)} \in K_{R_k}(\hat{z})) &\leq \q (S <  T) + \q (\hat{x}^{(T)} \in K_{R_k}(\hat{z}), T \leq S)
	\\&\leq \C R_{k}^{- \theta K} + \q (\hat{x}^{(T)} \in K_{R_k}(\hat{z}), T \leq S).
	\end{align*}
	It remains to control the second term. 
	
	Let \((\xi_k)_{k \in \N}\) be as in the proof of part (i). For \(m \in \N\) we set
	\[
	\mathcal{R} (m) \triangleq \Big\{\xi_{m + 1} - \xi_m = - 1, \sum_{k = m + 2}^\infty M^{\xi_k - \xi_{m + 1}} < 1\Big\}.
	\]
	We obtain that on \(\mathcal{R} (m) \cap \{m + 2 \leq T \leq S\}\) 
	\begin{align*}
	\|x^{(T)} - x^{(m + 1)}\|_2 &\leq \sum_{k = m + 2}^T \|x^{(k)} - x^{(k-1)}\|_2
	\leq \sum_{k = m + 2}^T 2MR^{(k-1)} \\
	&\leq 2M R^{(m + 1)} + 2M R^{(m + 1)} \sum_{k = m + 2}^{\infty} M^{\xi_{k} - \xi_{m + 1}}
	\\&\leq 4 M R^{(m + 1)} = 4 R^{(m)},
	\end{align*}
	and similarly, 
	\(
	|t^{(T)} - t^{(m + 1)}|^\frac{1}{2} \leq 2 R^{(m)}.
	\)
	Thus, on \(\mathcal{R} (m) \cap \{m + 2 \leq T \leq S, \hat{x}^{(T)} \in K_{R_k}(\hat{z})\}\) we have 
	\(
	\|\hat{z}_1 - x^{(m + 1)}\|_2 \leq 2R_k + 4R^{(m)} \leq 6R^{(m)},
	\)
	and 
	\(
	|\hat{z}_1 - t^{(m + 1)}|^\frac{1}{2} \leq 3 R^{(m)},
	\)
	which happens with probability bounded from above by a constant \(p = p(\varepsilon_o, M) < 1\), because \((x^{(m + 1)}, t^{(m + 1)})\in (x^{(m)}, t^{(m)}) + \pp K_{M R^{(m)}}\) and the definition of a good cylinder.
	Next, we need the following large deviation estimate: 
	\begin{lemma}\label{lem: LDE}
		There exist constants \(\kappa, v > 0\) such that for all \(n \in \mathbb{N}\)
		\[
		\q \Big( \frac{1}{n} \sum_{k = 1}^n \1_{\mathcal{R}(k)} \leq \kappa \Big) \leq e^{- v n}.
		\]
	\end{lemma}
	\begin{proof}
		We call \(m \in \mathbb{N}\) a \emph{renewal}, if
		\[
		\begin{cases}
		\xi_m > \xi_n,&n > m,\\
		\xi_m < \xi_n,&n < m.
		\end{cases}
		\]
		For \(k \in \mathbb{N}\), let \(\tau_k\) be the \(k^\textup{th}\) renewal and note that \(\xi_{\tau_i + 2} - \xi_{\tau_k + 1} \leq - (i - k + 1) = k - i - 1\) for every \(i \geq k\).
		Consequently, we see that
		\begin{align*}
		\sum_{i = \tau_k + 2}^\infty M^{\xi_i - \xi_{\tau_k + 1}} &= M^{- \xi_{\tau_k + 1}} \sum_{i = k}^\infty \sum_{j = \tau_i + 2}^{\tau_{i + 1} + 1} M^{\xi_j} 
		\\&\leq M^{- \xi_{\tau_k + 1}} \sum_{i = k}^\infty (\tau_{i + 1} - \tau_i) M^{\xi_{\tau_i + 2}} 
		\leq \sum_{i = k} (\tau_{i + 1} - \tau_i) M^{k - i - 1}.
		\end{align*}
		This shows that the event \(\mathcal{R}(\tau_k)\) happens in case 
		\(
		(\tau_{i + 1} - \tau_i) M^{k - i - 1} < 2^{k - i - 1}\) for all \(i \geq k.
		\)
		Now, the proof concludes identical to the proof of \cite[Claim 4.12]{Berger18}.
	\end{proof}
	Let
	\[
	Z_N \triangleq \sum_{k = 1}^{N} \1_{\mathcal{R} (k)}, \qquad N \in \mathbb{N}.
	\]
	Note that 
	\[
	T \geq \Big\lceil \frac{K \log (R_k)}{2 \log (M)}  \Big \rceil \equiv n + 2, 
	\]
	because \(R^{(m)} \geq \frac{R}{M^m}\) and \(R \geq R_{k + 1} M^{-1}\). Now, we obtain that 
	\begin{align*}
	\q(\hat{x}^{(T)}\in  K_{R_k}(\hat{z}), T \leq S) 
	&\leq \q(Z_n \leq \kappa n) + \q(\hat{x}^{(T)}\in K_{R_k}(\hat{z}), T \leq S, Z_n >  \kappa n) 
	\\&\leq R_k^{- \frac{v}{4 \log (M)} K} + R_k^{ \frac{\log (p) \kappa}{4 \log (M)} K}.
	\end{align*}
	The claim of (ii) follows.
\end{proof}
\subsubsection{Success of the Coupling}\label{sec: success}
Let \(k \triangleq \max (n \in \mathbb{N} \colon R > R_n)\) and define the following sets:
\begin{align*}
A_1 &\triangleq \bigcap_{i = 0}^{k} \mathsf{A} (\hat{x}^{(T^{(i)})}, R_i^{2}),
\qquad
A_2 \triangleq \bigcap_{i = 0}^k \big\{T^{(i)} \leq S^{(i)}\big\},
\end{align*}
where \(T^{(k + 1)} \triangleq 0\) and 
\begin{align*}
T^{(i)} &\triangleq \inf \big(m  \geq T^{(i + 1)} \colon R^{(m)} \leq R_i\big),\\
S^{(i)} &\triangleq \inf \big(m  \geq T^{(i + 1)} \colon R^{(m)} \geq R_{i + 1}^{2} \text{ or } \hat{x}^{(m)}  \not \in K_{R_{i + 1}^{2}/2}(\hat{x}^{(T^{(i + 1)})})\big),
\end{align*}
and 
\begin{align*}
A_3 &\triangleq \big\{(Z_{\tau}, \tau + \hat{z}_1) = (Y_\rho, \rho +  \hat{y}_1)\big\},\\
A_4 &\triangleq \big\{ \{(Z_{n \wedge \tau}, n \wedge \tau + \hat{z}_1), (Y_{n \wedge \rho}, n \wedge \rho + \hat{y}_1) \colon n \in \N\} \subset K_{(c + 6)M R}\big\},
\end{align*}
where
\begin{align*}
\tau &\triangleq \hat{z}^{(T^{(0)})}_1 +  \inf \big(m \in \N \colon (Z^{(T^{(0)})}_m, m + \hat{z}^{(T^{(0)})}_1) \not \in K_{(c + 3)R_o}(\hat{x}^{(T^{(0)})})\big), \\
\rho &\triangleq \hat{y}^{(T^{(0)})}_1 + \inf \big(m \in \N \colon (Y^{(T^{(0)})}_m, m + \hat{y}^{(T^{(0)})}_1) \not \in K_{(c + 3)R_o}(\hat{x}^{(T^{(0)})})\big).
\end{align*}
Moreover, set \[A \triangleq \bigcap_{i = 1}^4 A_i.\]

We define \(\mathsf{A} (0, R^2)\) to be the set of all environments \(\omega \in \Omega\) for which every sub-cylinder of \(K_{R^2}\) with radius \(R^2_{k}\) is \(\omega\)-admissible and every sub-cylinder with radius \(> R_k\) is \(\omega\)-good.
\begin{lemma}\label{lem: suc global constant}
	If \(\delta < \rho K\) there exist two constants \(R'  \in \mathbb{N}\) and \(\zeta > 0\) such that if \(R_o \geq R'\) and \(\omega \in \mathsf{A}(0, R^{2})\), then
	\[
	\q (A) \geq \zeta.
	\]
\end{lemma}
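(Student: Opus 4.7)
The plan is to prove $\q(A) \geq \zeta$ by a downward induction on the scale index $i$, running from $i = k+1$ (where $T^{(k+1)} = 0$, $\hat{x}^{(0)} = (0,0)$, and the admissibility hypothesis $\omega \in \mathsf{A}(0, R^2)$ provides the initial input) down to $i = 0$. Denote by $\zeta_i$ a lower bound for the probability of the compound event ``for all $j \in \{k+1, k, \dots, i\}$ we have $T^{(j)} \leq S^{(j)}$, and $K_{R_i^2}(\hat{x}^{(T^{(i)})})$ is $\omega$-admissible''. The hypothesis yields $\zeta_{k+1} = 1$, and the goal is to show that the multiplicative losses in the induction are summable.

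For the induction step $i+1 \to i$, first apply \Cref{lem: heat kernel}(i) at the (random) center $\hat{x}^{(T^{(i+1)})}$ and with $i$ in the role of $k$: because $K_{R_{i+1}^2}(\hat{x}^{(T^{(i+1)})})$ is $\omega$-admissible on the inductive event,
\[
\q\bigl(S^{(i)} < T^{(i)} \,\big|\, \text{history up to } T^{(i+1)} \bigr) \leq \C R_i^{-\theta K}.
\]
Next, I need to propagate admissibility one scale further: inside $K_{R_{i+1}^2}(\hat{x}^{(T^{(i+1)})})$ at most $R_{i+1}^{\nu}$ sub-cylinders of scale $R_i^2$ fail to be $\omega$-admissible, and by \Cref{lem: heat kernel}(ii) each such bad sub-cylinder is hit by $\hat{x}^{(T^{(i)})}$ with conditional probability at most $\C R_i^{-\rho K}$. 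A union bound therefore yields a total contribution of $R_{i+1}^{\nu} \cdot \C R_i^{-\rho K} = \C R_i^{(\nu - \rho)K}$. Since $\nu < \delta / K$ from the multi-scale construction and $\delta < \rho K$ by hypothesis, $\eta := \rho - \nu > 0$, and combining the two estimates gives
\[
\zeta_i \geq \zeta_{i+1}\bigl(1 - \C R_i^{-\eta K} - \C R_i^{-\theta K}\bigr).
\]

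Because $R_i = R_o^{K^i}$ grows doubly exponentially, $\sum_{i \geq 0} R_i^{-\min(\eta,\theta) K}$ converges, so iterating the induction yields $\zeta_0 \geq \zeta^\star := \prod_{i=0}^{\infty} \bigl(1 - \C R_i^{-\min(\eta,\theta) K}\bigr) > 0$ once $R_o \geq R'$ is large enough. At time $T^{(0)}$ the two walks sit in an $\omega$-good cylinder $K_{R_o}(\hat{x}^{(T^{(0)})})$, and \Cref{lem: suc of bc on good cyl}(ii) produces the exact-coincidence event $A_3$ with conditional probability at least $1 - C$, giving $\q(A_1 \cap A_2 \cap A_3) \geq \zeta^\star (1 - C) > 0$. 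The confinement event $A_4$ follows from the same biased-random-walk analysis used in the proof of \Cref{lem: heat kernel}: the expected total spatial displacement $\E^{\q}\bigl[\sum_{m < T^{(0)}} 2 M R^{(m-1)}\bigr]$ is $O(MR)$, because the log-scale process is geometrically biased downward and the visits to scales above $R$ decay geometrically; a Chebyshev bound then forces $\q(A_4^c)$ to be strictly below any fixed fraction of $\zeta^\star(1-C)$ once $R_o$ is sufficiently large.

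The main obstacle I anticipate is the rigorous book-keeping of the conditioning in the induction. \Cref{lem: heat kernel} is stated for a deterministic center $\hat{x}$ and a starting radius $R$ in a fixed range $(R_k, R_{k+1}]$, whereas here both the center $\hat{x}^{(T^{(i+1)})}$ and the starting radius $R^{(T^{(i+1)})} \in (R_i, R_{i+1}]$ are random and depend on $\omega$. Since the estimates of \Cref{lem: heat kernel} are uniform in the center and in the precise value of the starting radius within the specified interval, the strong Markov property of the coupling allows the lemma to be applied conditionally at each scale, but verifying this uniformity and carefully tracking the filtration on which the inductive event lives is the technical heart of the argument.
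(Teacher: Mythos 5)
Your overall strategy—downward induction over scales to control $A_1 \cap A_2$, using \Cref{lem: heat kernel}(i) for the $T^{(i)} \leq S^{(i)}$ events, \Cref{lem: heat kernel}(ii) together with the $R_{i+1}^{\nu}$-bound from admissibility for the union bound over bad sub-cylinders, exploiting the doubly exponential growth of $R_i$ to sum the losses, and then \Cref{lem: suc of bc on good cyl}(ii) for $A_3$—is essentially the paper's argument, and your explicit induction makes transparent where the paper's one-line assertion of the conditional bound $\q(\mathsf{A}(\hat{x}^{(T^{(n)})}, R_n^2)^c \mid \cdots) \leq \C R_n^{-\kappa}$ comes from.

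The step that fails is the treatment of $A_4$. You bound the expected total spatial displacement $\E^\q\bigl[\sum_{m < T^{(0)}} 2M R^{(m-1)}\bigr]$ by $O(MR)$ and invoke Chebyshev/Markov against the confinement radius $(c+6)MR$. But both quantities scale linearly in $R$, so the resulting bound $\q(A_4^c) \lesssim 1/(c+6)$ is a fixed constant that does \emph{not} decay as $R_o \to \infty$; the claim that it falls below an arbitrary fraction of $\zeta^\star(1-C)$ for $R_o$ large is unjustified. Worse, the $O(MR)$ estimate itself is dubious: the displacement per step carries the factor $M^{L_{m-1}}$ for the log-scale process $L$, and since $L$ steps up with probability $1/3$ and $M \geq 10$, one has $\E[M^{L_m}] \to \infty$ in $m$, so the expected sum up to $T^{(0)}$ need not be $O(MR)$ at all. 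The paper sidesteps this by introducing the \emph{deterministic} event $\mathcal{R} = \{\sum_{i\ge 1} M^{\xi_i} < 1\}$ on the dominating biased walk $(\xi_i)$: on $A_1 \cap A_2 \cap \mathcal{R}$ the displacement bound $\|x^{(T^{(0)})} - x\|_2 \leq 4MR$ (and the time analogue) holds pathwise, so $A_1 \cap A_2 \cap \mathcal{R} \subseteq A_4$, and $\q(\mathcal{R}) > 0$ is a constant (justified via \Cref{lem: LDE}). The final bound then reads $\q(A) \geq (1-C)\bigl(\q(\mathcal{R}) - 2\epsilon\bigr) > 0$ for $\epsilon$ (and hence $R_o$) chosen appropriately. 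You should replace your Chebyshev step by this $\mathcal{R}$-argument.
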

\begin{proof}
	Using \Cref{lem: heat kernel} and the definition of admissibility, we conclude the existence of a constant \(\kappa > 0\) such that
	\begin{align*}
	\q \Big( \mathsf{A}(\hat{x}^{(T^{(n)})}, R_n^{2})^c \big|\bigcap_{i = n}^{k + 1} \mathsf{A}(\hat{x}^{(T^{(i)})}, R^{2}_{i})\Big) 
	&\leq \C R^{- \kappa}_n, \quad n \in [k],
	\end{align*}
	where with abuse of notation \(\hat{x}^{(0)} \equiv 0\) and \(R_{k + 1} \equiv R\).
	Using the elementary inequality:
	\[
	\prod_{i = 1}^n (1 - a_i) \geq 1 - \sum_{i = 1}^n a_i, \quad a_i \in (0, 1),
	\]
	we obtain that
	\[
	\q \Big( \bigcap_{i = 0}^k \mathsf{A}(\hat{x}^{(T^{(i)})}, R_i^{2})\Big) \geq 1 - \sum_{i = 0}^k \C R_i^{- \kappa} \geq 1 - \sum_{i = 0}^\infty \C  R_i^{- \kappa} = 1 - \sum_{i = 0}^{\infty} R_o^{- \kappa K^i}.
	\]
	Choosing \(R_o\) large enough, we get
	\[
	\q \Big( \bigcap_{i = 0}^k \mathsf{A}(\hat{x}^{T^{(i)}}, R_i^{2})\Big) \geq 1 - \varepsilon, 
	\]
	for a fixed \(\varepsilon \in (0, 1)\). Using \Cref{lem: heat kernel} (i), we also obtain that 
	\[
	\q (A_2) \geq 1 - \C \sum_{i =  0}^\infty R^{- \theta K}_{i} \geq 1 - \varepsilon, 
	\]
	provided \(R_o\) is large enough.
	Let 
	\[
	\mathcal{R} \triangleq \Big\{\sum_{i = 1}^\infty M^{\xi_i} < 1\Big\}.
	\]
	We note that
	\(
	\q (\mathcal{R}) > 0,
	\)
	see \Cref{lem: LDE}.
	Let \(\hat{x} = (x, t)\) and note that on \(A_1 \cap A_2 \cap \mathcal{R}\)
	\begin{align*}
	\|x^{(T^{(0)})} - x\|_2 &\leq \|x^{(1)} - x\|_2 + \sum_{i = 2}^{T^{(0)}} \|x^{(i)} - x^{(i - 1)}\|_2
	\leq 2MR + \sum_{i = 2}^{T^{(0)}} 2M R^{(i - 1)}
	\\&\leq 2MR + 2M R \sum_{i = 1}^{\infty} M^{\xi_{i}}
	\leq 4 MR.
	\end{align*}
	Similarly, we see that \(|t^{(T^{(0)})} - t| \leq 4 M^2R^2\) on \(A_1 \cap A_2 \cap \mathcal{R}\).
	Hence, 
	\(
	\q(A_1 \cap A_2 \cap A_4) \geq \q(A_1 \cap A_2 \cap \mathcal{R}).
	\)
	Due to \Cref{lem: suc of bc on good cyl}, we also have 
	\(
	\q (A_3 | A_1 \cap A_2 \cap A_4) \geq C.
	\)
	Finally, we conclude that 
	\(
	\q (A) 
	\geq  C (\q(\mathcal{R}) - 2 \varepsilon ).
	\)
	Taking \(\varepsilon\) small enough completes the proof.
\end{proof}
\subsection{Proof of \Cref{theo: OI}}
Let \((Z_n)_{n \in \N}\) and \((Y_n)_{n \in \N}\) be the coupled processes as defined in \Cref{sec: coupling def} and let \(\tau\) and \(\rho\) be as in \Cref{sec: success}. Take \(\omega \in \mathsf{A}(0, R^{2})\) and let \(u \colon K_{(c + 6)M R}\to \mathbb{R}\) be \(\omega\)-caloric. Now, we have 
\begin{align*}
u(\hat{z}) - u(\hat{y}) 
&\leq \q((Z_\tau, \tau + \hat{z}_1) \not = (Y_\rho, \rho + \hat{y}_1)) \underset{\Theta^p(K_{(c + 6)MR})}{\osc} u
\leq (1 - \zeta) \underset{\Theta^p(K_{(c + 6) MR})}{\osc} u, 
\end{align*}
where \(\zeta> 0\) is as in \Cref{lem: suc global constant} and \(p\) is the parity of \(\hat{z}\) and \(\hat{y}\). In view of \Cref{lem: prob admissible}, this proves \Cref{theo: OI}. \qed

\section{Proof of the PHI: \Cref{theo: PHI}}\label{sec: Pf PHI}
\subsection{Some Notation}\label{sec: pf PHI notation}
In the following, fix a parity \(p \in \{o, e\}\).
Take \(\nu \in (0, 1)\) and \(N \in \mathbb{N}\), and let \(\{\A_1, \dots, \A_N\}\) be a covering of \(\pp \K_1\) and let \(\{\mathbb{C}_1, \dots, \mathbb{C}_N\}\subset \pp \K_1\). Further, we suppose that
\[
\max_{i = 1, \dots, N} \text{diam} (\A_i) \leq \tfrac{\nu}{4}. 
\]
We assume that the boundary of each \(\A_i\) and \(\mathbb{C}_i\) has zero measure.

Let \(\zeta > 1\) and \(\gamma \in (0, 1)\) be as in \Cref{theo: OI} and let \(\mathcal{O}_{\hat{x}, R}\) be the set of all \(\omega \in \Omega\) such that the oscillation inequality
\[
\underset{\Theta^p (K_R(\hat{x}))} \osc u \leq \gamma \underset{\Theta^p (K_{\zeta R}(\hat{x}))} \osc u
\]
holds for all \(\omega\)-caloric functions \(u\) on \(\ok_{\zeta R} (\hat{x})\).

For \(i = 1, \dots, N\) define \(\chi_{\hat{x}, R, i}\) and \(\Psi_{\hat{x}, R, i}\) as in \eqref{eq: exit prob def} with \(\A\) replaced by \(\A_i\) and \(\oK_R\) replaced by \(\oK_R(\hat{x})\).
Moreover, set
\(
\alpha \triangleq 2 - \varepsilon
\)
and 
\[
\mathcal{U}_{\hat{x}, R} \triangleq \Big\{ \omega \in \Omega \colon  \forall_{\hat{y} \in K_{R}(\hat{x})} \forall_{i = 1, \dots, N}\ \frac{|\Phi_{\hat{x}, \alpha R, i} (\hat{y}) - \chi_{\hat{x}, \alpha R, i} (\hat{y})|}{\chi_{\hat{x}, \alpha R, i} (\hat{y})} \leq \varepsilon\Big\}.
\]
The \(\omega\)-dependence in the above definition stems from \(\Phi\).

For \(i = 1, \dots, N\) define \(\chi^*_{\hat{x}, R, i}\) and \(\Phi^*_{\hat{x}, R, i}\) as in \eqref{eq: exit prob def} with \(\A\) replaced by \(\mathbb{C}_i\) and \(\oK_R\) replaced by \(\oK_R(\hat{x})\).
Fix a \(\theta_1, \dots, \theta_N > 1\) and \(\delta^*\in (0, 1)\), and set 
\[
\mathcal{U}^*_{\hat{x}, R} \triangleq \big\{ \omega \in \Omega \colon  \forall_{\hat{y} \in K_{R}(\hat{x})} \forall_{i = 1, \dots, N}\ |\Phi^*_{\hat{x}, \theta_i R, i} (\hat{y}) - \chi^*_{\hat{x}, \theta_i R, i} (\hat{y})| \leq \delta^*\big\}.
\]

Let \(\kappa \in (0, \frac{1}{2d})\). 
Define the map \(\widetilde{J} \colon \Omega \to \Omega, \widetilde{J}(\omega) \triangleq \widetilde{\omega} \) as follows: For \(x \in \mathbb{Z}^d\) and \(i = 1, \dots, 2d\)
set 
\begin{align*}
\widetilde{\omega} (x, e_i) \triangleq \begin{cases} 0, & \omega(x, e_i) < \kappa,\\
\omega(x, e_i) + \frac{M}{N},& \omega(x, e_i) \geq \kappa,\end{cases}
\end{align*}
where \(N \triangleq \sum_{i = 1}^{2d} \1_{\{\omega(x,  e_i) \geq \kappa\}}\) and \(M \triangleq \sum_{i = 1}^{2d} \omega (x, e_i) \1_{\{\omega(x, e_i) < \kappa\}}\).

Next, take \(\delta \in (\xi, \frac{1}{5})\), where \(\xi \in (0, \frac{1}{5})\) is as in the statement of \Cref{theo: PHI}, and define 
\begin{align*}
\mathcal{J}_{R} &\triangleq \Big\{  \omega \in \Omega \colon \forall_{y \in B_{2R}} \not\hspace{-0.05cm} \exists_{z \in \mathbb{Z}^d} \text{ such that } y \xrightarrow{\ \widetilde{\omega} \ } z, z \not \in \mathcal{C}_{\widetilde{\omega}}, \|z - x\|_\infty = \lfloor R^{\xi}\rfloor \Big\},
\\
\mathcal{I}_R 
&\triangleq \Big\{  \omega \in \Omega \colon \forall_{y \in B_{2R}} \text{ all self-avoiding paths in \(\omega\) with length \(\lfloor \mathfrak{o} R^{\xi/4}\rfloor\)}\\&\hspace{6cm}\text{ and starting value \(y\) have visited \(\mathcal{C}_{\widetilde{\omega}}\)}\Big\},\\
\mathcal{S}_{R} &\triangleq \Big\{ \omega \in \Omega \colon \forall_{x, y \in \mathcal{C}_{\widetilde{\omega}} \cap B_{3 R^\delta}}\ \text{dist}_{\widetilde{\omega}} (x, y) \leq \C R^\delta \Big\},
\end{align*}
where \(\C, \mathfrak{o} > 0\) are constants determined in the following lemmata. 
The proof of the next lemma is given in \Cref{sec: pf of main lemma repair} below.
\begin{lemma}\label{lem: main lemma repair}
	If \(\kappa\) is small enough, \(\mathfrak{o} > 0\) can be chosen such that there are constants \(R' > 0, \c_1, \c_2 > 0 \) and \(\zeta > 0\) such that for all \(R \geq R'\)
	\[
	P(\mathcal{I}_R) \geq 1 - \c_1 e^{- \c_2 R^\zeta}.
	\]
\end{lemma}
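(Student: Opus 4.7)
The plan is to show that with probability $\ge 1 - c_1 e^{-c_2 R^\zeta}$ the $\omega$-connected component of every $y \in B_{2R} \cap D$ in $D := \mathbb{Z}^d \setminus \mathcal{C}_{\widetilde{\omega}}$ has at most $L := \lfloor \mathfrak{o} R^{\xi/4}\rfloor$ vertices. This suffices because a self-avoiding $\omega$-path of length $L$ starting at $y$ and avoiding $\mathcal{C}_{\widetilde{\omega}}$ would provide $L + 1$ distinct vertices inside that component. The argument combines a hole-size estimate for the repaired environment $\widetilde{\omega}$ with the sparsity of \emph{bridges}, i.e.\ edges that are open in $\omega$ but closed in $\widetilde{\omega}$.

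First I would verify that $\widetilde{\omega}$ still satisfies Standing Assumption \ref{SA: PHI} (the repair is a coordinate-wise function of $\omega$, and for $\kappa$ small enough it preserves genuine $d$-dimensionality and balance), so the hole estimate behind the event $\mathcal{H}$ in the proof of \Cref{prop: osc s c} (from \cite[Proposition 3.1]{Berger18}) applies to $\widetilde{\omega}$. I would apply it at the fine scale $r := \lfloor R^{\xi/(5d)}\rfloor$ and take a union bound over starting points in $B_{2R}$: this produces an event $\mathcal{J}'$ with $P(\mathcal{J}') \ge 1 - c_1 R^{d + \xi/5} e^{-c_2 R^{\alpha\xi/(5d)}}$ on which every maximal $\widetilde{\omega}$-connected component of $D$ meeting $B_{2R}$ (a \emph{chunk}) has diameter $\le r$ and therefore volume at most $V := (2r+1)^d \le c R^{\xi/5}$. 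Next I would record that a fixed nearest-neighbour edge $\{x, x+e\}$ is a bridge with probability at most $2 q(\kappa)$, where $q(\kappa) := P(0 < \omega(0,e_1) < \kappa) \downarrow 0$ as $\kappa \downarrow 0$ by dominated convergence, so that $\kappa$ can be fixed small enough to push $q(\kappa)$ below any prescribed threshold.

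The combinatorial core of the argument decomposes any self-avoiding $\omega$-path in $D$ of length $L$ into maximal $\widetilde{\omega}$-subpaths separated by bridge edges. On $\mathcal{J}'$ each subpath has at most $V$ vertices, so the number of bridges is at least $b := \lceil L/V \rceil - 1 \ge c' \mathfrak{o} R^{\xi/20}$. I would then enumerate the candidate ``chunk-trees'' that the $\omega$-cluster of $y$ can follow: once the chunk containing $y$ is fixed, each additional chunk is reached through a bridge emanating from the current set (at most $2dV$ choices per bridge), so there are at most $(2dV)^b$ skeletons. Because $\omega$ takes independent values at distinct vertices and the bridges joining distinct chunks live on disjoint pairs of vertices, the probability that any prescribed set of $b$ bridges is realised is at most $(2 q(\kappa))^b$. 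A final union bound over $y \in B_{2R}$ gives
\[
P(\mathcal{I}_R^c) \le c R^d \bigl(4 d V\, q(\kappa)\bigr)^b,
\]
which, after fixing $\kappa$ so small that $4 d V q(\kappa) \le R^{-\xi/10}$, say, becomes $\le e^{-R^\zeta}$ for some $\zeta = \zeta(\xi, d, \alpha) > 0$ once $\mathfrak{o}$ is chosen large enough.

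The main obstacle is the enumeration in the last step: chunks are determined by the $\ge \kappa$ part of $\omega$ while bridges are determined by the $(0,\kappa)$ part, and one must carefully disentangle the correlations between chunk shapes and bridge positions. The right order of conditioning is to fix the chunk vertex sets first (using $\mathcal{J}'$ to control their geometry) and then invoke the i.i.d.\ structure of $\omega$ across distinct vertices to multiply the bridge factors independently; keeping the bookkeeping consistent with the tree-decomposition of the cluster is the technical crux.
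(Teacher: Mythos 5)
Your plan is structurally different from the paper's and, as written, has a gap in the closing estimate $P(\mathcal{I}_R^c) \le c R^d (4 d V q(\kappa))^b$ that I do not see how to repair. First, the parameter $\kappa$ must be fixed once and for all (it enters the definition of $\widetilde{J}$ and hence of $\mathcal{C}_{\widetilde{\omega}}$, used throughout the section), so $q(\kappa)$ is a constant; but your chunk volume $V \sim R^{\xi/5}$ grows without bound, so $4 d V q(\kappa) \to \infty$ as $R \to \infty$ and the displayed bound is vacuous for large $R$. You cannot simply take the hole scale $r$ to be a constant to keep $V$ bounded, because then the per-site hole estimate from \cite[Proposition 3.1]{Berger18} applied to $\widetilde{\omega}$ no longer decays and cannot survive the union bound over the $\sim R^d$ starting points in $B_{2R}$. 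Second, and more fundamentally, the skeleton count $(2dV)^b$ is inherently \emph{conditional} on the chunk geometry, i.e.\ on the $\sigma$-algebra generated by the indicators $\1\{\omega(x,e) \ge \kappa\}$, whereas $(2q(\kappa))^b$ is an \emph{unconditional} probability; pairing them is not legitimate. Conditionally on the chunk structure, the probability that a given $\widetilde{\omega}$-closed edge is a bridge is $q(\kappa)/P(\omega(0,e) < \kappa)$, and this ratio equals $1$ whenever $\nu$ gives zero mass to $\{\omega(0,e) = 0\}$ — a perfectly admissible case under Standing Assumption~\ref{SA: PHI} (e.g.\ any elliptic but non-uniformly-elliptic marginal law with no atom at $0$). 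In that regime every $\widetilde{\omega}$-closed edge is automatically $\omega$-open, there is nothing small to multiply against the entropy $(2dV)^b$, and the bridge-counting argument produces nothing.

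The tension you identify in your last paragraph is real, but it is not just a bookkeeping nuisance: making the hole estimate strong enough for the union bound over $B_{2R}$ forces $V \to \infty$, which in turn forces the per-bridge probability to decay with $R$ — impossible with a fixed $\kappa$. The paper avoids this by abandoning the path-counting route entirely and working at a \emph{fixed} scale $N$ (depending on $\kappa$ but not on $R$). It calls $C_{2N}(z)$ ``very $\omega$-good'' when (i) every self-avoiding $\omega$-path of length $\gtrsim N$ started in $C_N(z)$ reaches the local sink (from \cite[Lemma 3.6]{Berger18}) and (ii) the local sinks of $\omega$ and $\widetilde{\omega}$ coincide; condition (ii) is precisely where $\kappa$ small enters, since it holds with probability $\ge 1 - \varepsilon/2$ once $|C_{2N}(0)|\, P(\exists_k\ \omega(0,e_k) \in (0,\kappa)) \le \varepsilon/2$. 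The indicator field $(G_z)_{z}$ of very-good boxes is $2$-dependent with marginals $\ge 1 - \varepsilon$, so by Liggett--Schonmann--Stacey it stochastically dominates supercritical Bernoulli site percolation; by the Guo--Zeitouni estimate, the finite cluster $A_z$ of the complement of the infinite cluster has stretched-exponential tails, $P(|A_z| \ge k) \le c e^{-k^\alpha}$. Any self-avoiding $\omega$-path from $z$ of length $> |C_{2N}(0)|\,|A_z \cup \partial A_z|$ must cross a very-good box on $\mathcal{D}$ and hence visit $\mathcal{C}_{\widetilde{\omega}}$. Thus the stretched-exponential decay is borrowed from a percolation tail at a constant scale, not manufactured from a product of per-step probabilities along the path — that is the idea your proposal is missing.
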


Finally, define 
\[
\mathcal{Z}_{R} \triangleq \bigcap_{\hat{y} \in K_{R}} \bigcap_{r \in (R^\delta, R]} \big[ \mathcal{O}_{\hat{y}, r} \cap \mathcal{U}_{\hat{y}, r} \cap \mathcal{U}^*_{\hat{y}, r}\big] \bigcap  \mathcal{J}_{R} \bigcap \mathcal{I}_R \bigcap \mathcal{S}_R.
\]

\begin{lemma}\label{lem: streched expo}
	If \(\kappa\) is small enough, \(\C > 0\) can be chosen such that there are constants \(R' > 0, \c_1, \c_2 > 0 \) and \(\zeta > 0\) such that for all \(R \geq R'\)
	\[
	P(\mathcal{Z}_R) \geq 1 - \c_1 e^{\c_2 R^\zeta}.
	\]
\end{lemma}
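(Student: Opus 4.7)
The plan is to estimate $P(\mathcal{Z}_R^c)$ by a union bound over its six constituent events and verify that each contributes only a stretched exponentially small probability. The two events $\mathcal{I}_R$ and $\mathcal{S}_R$ are handled directly by \Cref{lem: main lemma repair} and by Proposition 3.2 of \cite{Berger18} (applied at scale $R^\delta$). The event $\mathcal{J}_R$ is treated by essentially the same percolation argument used for $\mathcal{H}$ in the proof of \Cref{prop: osc s c}: for $\kappa$ small enough, $\widetilde{\omega}$ is almost surely a small perturbation, and Proposition 3.1 of \cite{Berger18} gives an individual bound $e^{-R^{\xi\alpha}}$ on the existence of a bad escape path at scale $R^\xi$; a union bound over $y \in B_{2R}$ absorbs a polynomial factor.

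The main work is with the triple intersection $\bigcap_{\hat{y}\in K_R}\bigcap_{r\in(R^\delta,R]} [\mathcal{O}_{\hat{y},r}\cap\mathcal{U}_{\hat{y},r}\cap\mathcal{U}^*_{\hat{y},r}]$. Here I would first discretize the range of radii: it suffices to take $r$ in a geometric grid $\{\zeta^k : R^\delta \le \zeta^k \le R\}$, since the definitions of $\mathcal{O}, \mathcal{U}, \mathcal{U}^*$ are monotone/continuous enough to allow comparison. That reduces the parameter set to at most polynomially many triples $(\hat{y}, r)$, say $\c R^{d+3}$ of them. For the oscillation event I invoke \Cref{theo: OI}: for each fixed $(\hat{y},r)$ with $r \ge R^\delta \ge R'$, $P(\mathcal{O}_{\hat{y},r}^c) \le e^{-r^{\delta'}} \le e^{-R^{\delta\delta'}}$. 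For the absolute quantitative event $\mathcal{U}^*_{\hat{y},\theta_i r}$ I apply \Cref{coro: QE} with target precision $\delta^*$ and the boundary sets $\mathbb{C}_i$, obtaining $P(\mathcal{U}^{*,c}_{\hat{y},r}) \le \c_1 e^{-\c_2 r^{\delta''}}$.

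For the relative event $\mathcal{U}_{\hat{y},r}$ I must combine \Cref{coro: QE} with a uniform lower bound on $\chi_{\hat{y},\alpha r, i}$ on the inner cylinder $K_r(\hat{y})$. Since $r/(\alpha r) = 1/\alpha < 1$ and $\alpha = 2-\varepsilon$, the inner cylinder sits strictly inside, so by the support theorem for Brownian motion with covariance $\a$ there exists $\eta_i = \eta_i(\A_i, \a, \varepsilon) > 0$ with $\chi_{\hat{y},\alpha r, i}(\cdot) \ge \eta_i$ on $K_r(\hat{y})$. Applying \Cref{coro: QE} with absolute precision $\varepsilon \min_i \eta_i$ then yields the desired relative bound, again with a stretched exponential failure probability in $r$.

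The final step is the union bound: summing $\c R^{d+3}$ contributions of the form $e^{-R^{\delta''\delta}}$ (or the analogous rate coming from the weakest of the three estimates) still gives $\c_1 e^{-\c_2 R^\zeta}$ for a sufficiently small $\zeta > 0$, provided $R \ge R'$. The delicate point — and the one I expect to be the real obstacle — is the uniformity of the Brownian lower bound $\eta_i$ over $i = 1,\dots, N$ and the geometry of the $\A_i$: one must be sure that the covering chosen in \Cref{sec: pf PHI notation} has each $\A_i$ non-degenerate enough that $\eta_i$ does not collapse when $\varepsilon$ is taken small. Handling this requires fixing the geometry of $\{\A_i\}$ a priori (only as a function of $\a, \varepsilon$, and $N$), so that the precision required of \Cref{coro: QE} depends only on $\varepsilon$, $N$, and $\a$, while the rate in $R$ stays untouched.
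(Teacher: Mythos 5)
Your proposal is correct and follows essentially the same route as the paper, which likewise disposes of the lemma by a union bound combining \Cref{theo: OI}, \Cref{coro: QE}, Propositions 3.1 and 3.2 of \cite{Berger18}, and \Cref{lem: main lemma repair}. The paper makes explicit one structural point you leave implicit: for \(\kappa\) small enough the pushforward measure \(P\circ\widetilde{J}^{-1}\) is again i.i.d., balanced, and genuinely \(d\)-dimensional, which is what licenses applying the results of \cite{Berger18} to \(\widetilde{\omega}\) when bounding \(P(\mathcal{J}_R^c)\) and \(P(\mathcal{S}_R^c)\); conversely, your observation that the relative error in \(\mathcal{U}_{\hat{y},r}\) must be reduced to the absolute error of \Cref{coro: QE} via a uniform lower bound on \(\chi_{\hat{y},\alpha r,i}\) over the inner cylinder is a correct and necessary detail that the paper's one-line proof does not spell out.
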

\begin{proof}
	If \(\kappa\) is small enough the probability measure \(P \circ \widetilde{J}^{-1}\) is balanced and genuinely \(d\)-dimensional. Thus, the claim follows from \Cref{theo: OI}, \Cref{coro: QE},  \cite[Propositions 3.1 and 3.2]{Berger18} and \Cref{lem: main lemma repair} with a union bound. 
\end{proof}

From now on, we will  assume that \(R \geq R'\) and that \(\omega \in \mathcal{Z}_R \cap \mathsf{B}\). It might be that we enlarge \(R'\) even further. Under these assumptions we will prove the parabolic Harnack inequality, which completes the proof of \Cref{theo: PHI}.

\subsection{The Proof}
Let \(u\) be a non-negative \(\omega\)-caloric function satisfying the growth condition \eqref{eq: growth cond} in \Cref{theo: PHI}.
For contradiction, assume that \(\hat{x}^* \in \Theta^p(K^+_R)\) and \(\hat{y}^* \in \Theta^p(K^-_R)\) satisfy
\begin{align}\label{eq: cont assp PHI}
u(\hat{x}^*) \geq \frac{(1 + 3 \varepsilon)H_{2 - \varepsilon} u(\hat{y}^*)}{(1 - \varepsilon)^2}, \qquad H \equiv H_{2 - \varepsilon} = H_\alpha.
\end{align}
Furthermore, let \(K^+_{(2 - \varepsilon/2) R}\) be the discrete version of \(\B_{(2 - \varepsilon/2) R} \times (2 R^2, (2 - \varepsilon/2)^2 R^2)\) and let \(K^{++}_{2R}\) be the discrete version of \(\B_{2R} \times (1.5R^2,4 R^2)\).

The proof of \Cref{theo: PHI} is based on the following three lemmata:
\begin{lemma}\label{lem: key lemma PHI}
	There exists a constant \(M \geq 1\) such that every subcylinder of \(K_{2R}^{++}\) with radius \(R^\delta\) contains a point \(\hat{z}\) of the same parity as \(\hat{x}^*\) such that 
	\[
	u(\hat{z}) \leq M u (\hat{y}^*).
	\]
	Moreover, there exists a subcylinder of \(K^+_{(2 - \varepsilon/2)R}\) with radius \(R^\delta\) which contains a point \(\hat{x}\) of the same parity as \(\hat{x}^*\) such that 
	\[
	u(\hat{x}) \geq M u (\hat{y}^*) 2^{\c R^{\frac{1 - \delta}{2}}}.
	\]
\end{lemma}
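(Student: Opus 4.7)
Both parts rest on two tools: the caloric martingale representation (\Cref{lem: caloric mg}) and the quenched/Brownian exit-measure comparison provided by \Cref{coro: QE} together with the set $\mathcal{U}^*$ from \Cref{sec: pf PHI notation}. The two assertions feed each other: (i) supplies an a~priori upper bound on $u$ that is exploited, point by point, to fuel the geometric growth produced by the iteration in (ii).

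\emph{First assertion.} Fix any subcylinder $C = K_{R^\delta}(\hat{z}_0)\subset K^{++}_{2R}$ and suppose for contradiction that $u > M u(\hat{y}^*)$ on $\Theta^p(C)$. Let $\tau$ denote the first time the space-time walk started at $\hat{y}^*$ either enters $\Theta^p(C)$ or exits a macroscopic enclosing cylinder. Applying the optional stopping theorem to the martingale in \Cref{lem: caloric mg}, and using $u\geq 0$ on the outer exit set, yields
\[
u(\hat{y}^*) \;\geq\; M\, u(\hat{y}^*)\cdot P^{y^*}_\omega\bigl(\text{walk hits } \Theta^p(C) \text{ before exit}\bigr).
\]
By \Cref{coro: QE}, applied to a boundary piece surrounding $C$, this quenched hitting probability differs from the corresponding Brownian harmonic measure by at most a small error. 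The Brownian probability is bounded below by a constant $p_0 = p_0(d,\delta,\alpha,\varepsilon)>0$ uniform in the position of $C$ inside $K^{++}_{2R}$, since $C$ sits at parabolic distance $\asymp R^\delta$ from the boundary while $K^{++}_{2R}$ has diameter $\asymp R$. Choosing $M:=2/p_0$ forces the contradiction, and thereby fixes the constant $M$.

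\emph{Second assertion.} We iterate. Set $\hat{x}^{(0)} := \hat{x}^*$. The hypothesis~\eqref{eq: cont assp PHI} gives $u(\hat{x}^{(0)}) \geq (1+c_0)\, M\, u(\hat{y}^*)$ for an explicit $c_0>0$ (enlarging $M$ above if needed). Given $\hat{x}^{(k)}\in\Theta^p(K^+_{(2-\varepsilon/2)R})$, write
\[
u(\hat{x}^{(k)}) \;=\; \sum_{\hat{z}\in\partial^p K_{R^\delta}(\hat{x}^{(k)})} \mathfrak{p}^{\hat{x}^{(k)}}_\omega(\hat{z})\, u(\hat{z})
\]
by \Cref{lem: caloric mg}. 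The first assertion supplies a region $\mathsf{S}\subset\partial^p K_{R^\delta}(\hat{x}^{(k)})\subset K^{++}_{2R}$ on which $u\leq M u(\hat{y}^*)$; the $\mathcal{U}^*$-estimate of \Cref{sec: pf PHI notation} guarantees that $\mathfrak{p}^{\hat{x}^{(k)}}_\omega(\mathsf{S})$ is within $\delta^*$ of the Brownian harmonic measure of $\mathsf{S}$, and the latter is bounded below by some $q>0$ uniformly in $\hat{x}^{(k)}$. Subtracting the contribution of $\mathsf{S}$ from the convex combination and renormalising isolates a complementary boundary point $\hat{x}^{(k+1)}$ with
\[
u(\hat{x}^{(k+1)}) \;\geq\; \frac{1}{1-q}\bigl(u(\hat{x}^{(k)}) - q\, M\, u(\hat{y}^*)\bigr) \;\geq\; \lambda\, u(\hat{x}^{(k)}),
\]
for some $\lambda>1$ so long as $u(\hat{x}^{(k)}) \gg M\, u(\hat{y}^*)$. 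The sequence $\hat{x}^{(k)}$ remains inside $K^+_{(2-\varepsilon/2)R}$ until an iteration count of order $R^{(1-\delta)/2}$, the exponent being dictated by the parabolic geometry of the target domain (spatial room $\asymp R$ and temporal room $\asymp R^2$ against per-step displacement $\asymp R^\delta$, together with the diffusive-balancing required to avoid drifting out). Taking the last admissible $k$ and rewriting $\lambda^k = 2^{(\log_2\lambda) k}$ delivers the claimed bound $u(\hat{x}) \geq M\, u(\hat{y}^*)\, 2^{\c R^{(1-\delta)/2}}$.

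\emph{Principal obstacle.} The technical heart is the inductive step in the second part: one must (a) select a small-value region $\mathsf{S}$ on $\partial^p K_{R^\delta}(\hat{x}^{(k)})$ whose Brownian harmonic measure is bounded below by a constant \emph{uniform} in $\hat{x}^{(k)}$; (b) translate that lower bound to the quenched exit measure via a $\mathcal{U}^*$-type estimate sharp enough not to swamp the factor $\lambda-1$; and (c) track the spatial/temporal location of $\hat{x}^{(k)}$ to ensure the iteration does not leave $K^+_{(2-\varepsilon/2)R}$ prematurely. Matching the resulting iteration count against the parabolic room in the inner cylinder is what produces the exponent $(1-\delta)/2$, and this combinatorial balancing ultimately drives the sharpness of the Harnack constant in~\eqref{eq: PHI}.
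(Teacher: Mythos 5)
Your proposal diverges substantially from the paper's argument and, as written, has two gaps that appear fatal.

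\textbf{First assertion.} You argue by contradiction, claiming that the probability (under $\bm^{y^*}$, and hence under $P_\omega^{y^*}$ via \Cref{coro: QE}) that the space-time walk started at $\hat{y}^*$ hits the small cylinder $\Theta^p(C)$, $C=K_{R^\delta}(\hat{z}_0)$, before exiting a macroscopic cylinder is bounded below by some $p_0>0$ ``uniform in the position of $C$.'' This is false: the target cylinder has radius $R^\delta\ll R$, and the Brownian probability of passing through a spatial ball of radius $R^\delta$ at a prescribed macroscopic time scale is, by heat-kernel scaling, of order $(R^\delta/R)^{d}$, which decays polynomially in $R$. In addition, \Cref{coro: QE} only compares \emph{exit measures on $\pp K_R$} with their Brownian counterparts; it does not provide a lower bound for hitting probabilities of small interior sets, and in the present degenerate setting such a lower bound is precisely the kind of statement that is not freely available. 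The paper never needs such an estimate: the first assertion is instead a by-product of a sequence $(\hat{y}_n)_{n\le\mathfrak{K}}$ that is \emph{constructed} step by step along with $(\hat{x}_n)$, at each step exiting a cylinder through a boundary piece $\theta_k r_n \mathbb{C}_k(\hat{u})$ of non-degenerate $\mathcal{U}^*$-controlled probability, so that $u(\hat{y}_{n+1})\le \mathfrak{m}\, u(\hat{y}_n)$; after $\mathfrak{K}$ steps one gets $M=\mathfrak{m}^{\mathfrak{K}}$. Your one-shot contradiction argument cannot reproduce this.

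\textbf{Second assertion.} Your inductive step requires a \emph{region} $\mathsf{S}\subset\pp K_{R^\delta}(\hat{x}^{(k)})$ on which $u\le M u(\hat{y}^*)$ and whose quenched exit probability from $\hat{x}^{(k)}$ is bounded below by a constant $q>0$. The first assertion, however, only supplies a single point of the prescribed parity in each radius-$R^\delta$ subcylinder with a small $u$ value — a single lattice point carries negligible exit probability and gives you no set $\mathsf{S}$ of non-degenerate measure, so the step $u(\hat x^{(k+1)})\ge\frac{1}{1-q}(u(\hat x^{(k)})-qMu(\hat y^*))$ cannot be extracted. You also appeal to ``parabolic geometry'' with fixed per-step displacement $\asymp R^\delta$ to justify the iteration count $\asymp R^{(1-\delta)/2}$, but that geometry would allow $\asymp R^{1-\delta}$ spatial steps and $\asymp R^{2(1-\delta)}$ temporal steps, so the exponent $(1-\delta)/2$ is not what your construction produces. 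In the paper, the exponent $(1-\delta)/2$ comes from a \emph{decreasing} radius schedule $r_n=\varepsilon R/(8\alpha n^2)$ whose sum is $O(R)$ (keeping the iteration inside $K^+_{(2-\varepsilon/2)R}$) and for which $r_n>R^\delta$ holds exactly for $n\lesssim R^{(1-\delta)/2}$; the geometric growth of $u(\hat{x}_n)$ is driven by \Cref{theo: OI} (the oscillation inequality), via the boundary-piece selection of \Cref{lem: existence k} and the bound \eqref{eq: main bound final lem pf}, not by the removal of a small-value boundary region as in your scheme.

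\textbf{Summary.} The paper builds $(\hat{x}_n)$ and $(\hat{y}_n)$ together in a Fabes--Stroock iteration driven by $\mathcal{U}$, $\mathcal{U}^*$ and the oscillation inequality, with radii $r_n\sim R/n^2$ and iteration count determined by when $r_n$ falls below $R^\delta$; the first assertion then falls out of the $(\hat{y}_n)$ branch of the construction. Your proposal attempts to prove the first assertion independently by a hitting-probability bound that does not hold and is not available from the paper's tools, and to bootstrap the second assertion from the first via a region-level argument that the first assertion does not support; it also mis-accounts for the iteration count.
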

From now on let \(\hat{x} = (x, t)\) be as in \Cref{lem: key lemma PHI}. 
\begin{lemma}\label{lem: strong sink bound}
	For every \(\hat{z} \in \mathcal{C}_{\widetilde{\omega}} \times \mathbb{Z}_+ \cap K_{(2 - \varepsilon/2) R}^+\) with the same parity as \(\hat{x}^*\) it holds that \[u (\hat{z}) \leq M u(\hat{y}^*) \kappa^{- 2R^{2\delta}}.\]
\end{lemma}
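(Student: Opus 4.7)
The plan is to apply \Cref{lem: key lemma PHI} at a time slice slightly earlier than $s$ to locate a point $\hat{z}' = (z', s')$ where $u$ is already controlled, and then transfer this control to $\hat{z} = (z, s)$ via the caloric martingale property together with a lower bound on $P^{z'}_\omega(X_{s-s'} = z)$. The blow-up factor $\kappa^{-2R^{2\delta}}$ will appear as a product of $\kappa^{-1}$ over the $n := s - s' \leq 2R^{2\delta}$ steps of a path from $z'$ to $z$ in the modified environment $\widetilde{\omega}$, each of whose edges has $\omega$-probability at least $\kappa$.

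Concretely, I would take the subcylinder of $K^{++}_{2R}$ of radius $R^\delta$ whose spatial ball is centered at $z$ and whose time interval is $[s - 2R^{2\delta}, s - R^{2\delta})$. Since $z \in \B_{(2 - \varepsilon/2)R}$ and $s \in (2R^2, (2 - \varepsilon/2)^2 R^2)$, this subcylinder indeed lies inside $K^{++}_{2R}$ for $R$ large, so \Cref{lem: key lemma PHI} produces a point $\hat{z}' = (z', s')$ of the same parity as $\hat{x}^*$ (hence as $\hat{z}$) with $u(\hat{z}') \leq Mu(\hat{y}^*)$, where $n \in (R^{2\delta}, 2R^{2\delta}]$ and $\|z - z'\|_\infty \leq 2R^\delta$. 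Since the walk from $z'$ travels at most $n$ lattice steps and $s < 4R^2$, it stays in $\ok_{2R}$ up to time $n$ for $R$ large. Combining \Cref{lem: caloric mg}, optional stopping, and $u \geq 0$ yields
\[
u(\hat{z}') \;=\; E^{z'}_\omega\big[u(X_n, s)\big] \;\geq\; P^{z'}_\omega(X_n = z)\, u(\hat{z}).
\]

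It therefore suffices to show $P^{z'}_\omega(X_n = z) \geq \kappa^n$, which reduces to exhibiting a $\widetilde{\omega}$-path from $z'$ to $z$ of length exactly $n$. The piece from $z'$ to the sink is handled by $\mathcal{I}_R$, which gives a $\widetilde{\omega}$-path of length at most $\lfloor \mathfrak{o} R^{\xi/4}\rfloor$ ending at some $z'' \in \mathcal{C}_{\widetilde{\omega}}$; the piece inside the sink from $z''$ to $z \in \mathcal{C}_{\widetilde{\omega}}$ is handled by (a translated version of) $\mathcal{S}_R$, which gives a further $\widetilde{\omega}$-path of length at most $\C R^\delta$. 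The concatenated path has some length $L \leq \C R^\delta + \mathfrak{o} R^{\xi/4} < n$, and the deficit $n - L$ is automatically even: $\hat{z}, \hat{z}'$ have the same parity, so $n \equiv \|z - z'\|_1 \pmod 2$, and all cycles in the bipartite lattice $\mathbb{Z}^d$ are even. One then pads to length exactly $n$ by iterating a short closed $\widetilde{\omega}$-walk through a sink vertex the required number of times; such loops exist by strong connectivity of $\mathcal{C}_{\widetilde{\omega}}$ in $\widetilde{\omega}$ together with balancedness of $\omega$, which forces $\widetilde{\omega}$-edges to come in $\pm e_i$ pairs at each vertex.

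The main obstacle is precisely this exact-length padding: not every sink vertex admits a two-step loop in $\widetilde{\omega}$, so one first fixes a short closed $\widetilde{\omega}$-walk through $z$ provided by strong connectivity (automatically of even length $\leq \C R^\delta$ by $\mathcal{S}_R$) and then iterates it enough times to absorb the deficit, possibly after slightly shifting where one enters it to match the required residue modulo the cycle length. With this path in hand, $P^{z'}_\omega(X_n = z) \geq \kappa^n \geq \kappa^{2R^{2\delta}}$, and the desired bound $u(\hat{z}) \leq Mu(\hat{y}^*)\, \kappa^{-2R^{2\delta}}$ follows immediately from the displayed inequality above.
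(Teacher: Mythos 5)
Your overall strategy matches the paper's: use the first part of \Cref{lem: key lemma PHI} to locate a point $\hat z'$ of the correct parity in a thin time slab just below $\hat z$ with $u(\hat z')\le Mu(\hat y^*)$, pass to $u(\hat z')\ge P^{z'}_\omega(X_n=z)\,u(\hat z)$ via the caloric martingale property, and exhibit a length-$n$ lattice path from $z'$ to $z$ each of whose edges has $\omega$-probability at least $\kappa$. Your explicit treatment of the exact-length padding and of parity is welcome; the paper handles this implicitly in its phrase ``provided $R$ is large enough.''

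However, the step that routes $z'$ into $\mathcal{C}_{\widetilde\omega}$ contains a genuine gap. You claim that $\mathcal{I}_R$ produces a $\widetilde\omega$-path of length at most $\lfloor\mathfrak{o}R^{\xi/4}\rfloor$ ending at some $z''\in\mathcal{C}_{\widetilde\omega}$. But $\mathcal{I}_R$ is a statement about self-avoiding paths \emph{in $\omega$}, not in $\widetilde\omega$: the edges of such a path need only have positive $\omega$-probability, which may be arbitrarily small and in particular $<\kappa$, so the per-step bound of $\kappa$, and hence $P^{z'}_\omega(X_n=z)\ge\kappa^n$, does not follow. (Even to extract a short $\omega$-path to the sink from $\mathcal{I}_R$ one would still need an auxiliary martingale-plus-loop-erasure argument, which you do not supply.) The paper instead invokes $\mathcal{J}_R$ at this point, not $\mathcal{I}_R$: $\mathcal{J}_R$ says the $\widetilde\omega$-hole around $z'$ has $\ell^\infty$-radius $<\lfloor R^\xi\rfloor$, and one then steers the walk toward the boundary of the hole using the balance condition -- at every vertex some pair $\pm e_i$ has $\omega$-probability $\ge\tfrac{1}{2d}\ge\kappa$, hence is $\widetilde\omega$-allowed, and one of the two signs makes outward progress. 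This produces a $\widetilde\omega$-path into the sink of length at most $d\lfloor R^\xi\rfloor$ along edges of $\omega$-probability at least $\tfrac{1}{2d}$. Replacing your $\mathcal{I}_R$-step with this $\mathcal{J}_R$-argument closes the gap; the rest of your proof (the $\mathcal{S}_R$-path inside the sink, the even-length padding, and the final comparison) then goes through as you describe. In the paper $\mathcal{I}_R$ is reserved for \Cref{lem: prob strong sink bound}, where control of self-avoiding $\omega$-paths is precisely what is needed.
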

Noting that \((1 - \delta)/2 > \frac{2}{5}\) and \(2 \delta < \frac{2}{5}\), we see that \(\hat{x} \not \in \mathcal{C}_{\widetilde{\omega}} \times \mathbb{Z}_+\).
Let 
\begin{align}\label{eq: def T sink}
T \triangleq \inf (n \in \mathbb{Z}_+ \colon (X_n, n + t) \not \in K_{2R} \text{ or } X_n \in \mathcal{C}_{\widetilde{\omega}}).
\end{align}
\begin{lemma}\label{lem: prob strong sink bound}
	\(P_\omega^{x} (X_T \not \in \mathcal{C}_{\widetilde{\omega}}) \leq \mathfrak{w}^{- R^{2 - \xi}}.\)
\end{lemma}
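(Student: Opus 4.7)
The plan is to prove Lemma 6.3 as a purely probabilistic estimate on the walk in $\omega$, independent of the caloric function $u$. Lemma 6.3 is what combines with Lemmas 6.1 and 6.2, via a maximum-principle argument applied to the hitting probability $v(\hat{z}) := P_\omega^{z}((X_{\tau}, \tau + s) \text{ exits } K_{2R} \text{ outside } \mathcal{C}_{\widetilde{\omega}})$, to produce the final contradiction that closes the proof of the PHI.

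First I would record the time budget. Since $\hat{x} = (x, t) \in K^+_{(2-\varepsilon/2)R}$, the coordinate $t$ satisfies $t < (2-\varepsilon/2)^2 R^2$, so the walker has at least
\[
\lceil 4R^2 \rceil - t \;\geq\; \bigl(2\varepsilon - \tfrac{\varepsilon^2}{4}\bigr) R^2
\]
steps in $K_{2R}$ before reaching the time boundary. This horizon dwarfs $R^{2-\xi}$, so it leaves plenty of room for a multi-window argument.

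The heart of the proof is a uniform short-time hitting estimate: for some $N = N(R) \in \mathbb{N}$ and $q_0 = q_0(R) > 0$, every $y \in B_{2R} \setminus \mathcal{C}_{\widetilde{\omega}}$ satisfies
\[
P_\omega^y\bigl(\tau_{\mathcal{C}_{\widetilde{\omega}}} \leq N\bigr) \;\geq\; q_0, \qquad \tau_{\mathcal{C}_{\widetilde{\omega}}} \;:=\; \inf\{n \in \mathbb{N} \colon X_n \in \mathcal{C}_{\widetilde{\omega}}\}.
\]
The building blocks are: (a) by $\mathcal{J}_R$, the set of non-sink points $\widetilde{\omega}$-reachable from $y$ lies inside the $\ell_\infty$-ball of radius $R^\xi$ around $y$; (b) because $\widetilde{\omega}$ is balanced and $\kappa$-elliptic on its support, the martingale identity applied to $\|X_n - y\|_2^2 - n$ bounds the expected sink-hitting time of the $\widetilde{\omega}$-walk by the squared escape diameter $O(R^{2\xi})$, so by Markov's inequality $P_{\widetilde{\omega}}^y(\tau_{\mathcal{C}_{\widetilde{\omega}}} \leq \c R^{2\xi}) \geq 1/2$; (c) a step-wise maximal coupling of the walks in $\omega$ and $\widetilde{\omega}$ (agreeing at each step with probability at least $1 - 2d\kappa$, by the construction of $\widetilde{J}$) transfers a fraction of this hitting probability to the $\omega$-walk.

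Granted the short-time estimate, the strong Markov property applied at the times $kN$ for $k = 0, 1, \dots, M - 1$ with $M := \lfloor (\lceil 4R^2 \rceil - t)/N\rfloor$ gives
\[
P_\omega^x(X_T \notin \mathcal{C}_{\widetilde{\omega}}) \;\leq\; (1 - q_0)^M \;\leq\; e^{-q_0 M}.
\]
Balancing $(N, q_0)$ so that $q_0 M \geq R^{2-\xi} \log \mathfrak{w}$ for all sufficiently large $R$ then yields the target bound; the parameter condition $\xi < 1/5$ is precisely what ensures that a permissible parameter window exists.

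The main obstacle is exactly this tension in choosing $(N, q_0)$: the coupling with $\widetilde{\omega}$ over $N$ steps costs a factor $(1 - 2d\kappa)^N$, so $N$ cannot grow too fast in $R$; on the other hand, the $\widetilde{\omega}$-martingale exit-time estimate yields a constant-order $q_0$ only for $N \gtrsim R^{2\xi}$. Reconciling these forces an appeal to the structural bound $\mathcal{I}_R$, whose self-avoidance length $\lfloor \mathfrak{o} R^{\xi/4}\rfloor$ can be combined with a loop-erasure argument on the walk's trace to upgrade the coupling: each loop traversal by the walk increases the "effective" hitting opportunity without consuming the coupling budget. This subtle upgrading of $q_0$ — executed so that $q_0 M \sim q_0 \varepsilon R^2/N$ beats $R^{2-\xi}\log\mathfrak{w}$ under the constraint $\xi < 1/5$ — is the technical crux of the proof.
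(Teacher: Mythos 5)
Your strategy of a uniform short-time hitting estimate followed by a Markov chain iteration is sound in outline, and your time-budget observation matches the paper's. But there is a genuine gap in how you propose to obtain the short-time estimate, and it is exactly the tension you flag at the end.

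The gap is your route through $\widetilde{\omega}$. Your chain (a)--(c) confines the $\widetilde{\omega}$-walk (via $\mathcal{J}_R$) to a radius-$R^{\xi}$ ball, so the $\widetilde{\omega}$-exit-time estimate requires a window $N \gtrsim R^{2\xi}$, and then you pay a step-by-step coupling cost of order $(1-2d\kappa)^N = (1-2d\kappa)^{R^{2\xi}}$. This is super-exponentially small in $R^{\xi}$, so $q_0 M$ is in fact decaying in $R$, not beating $R^{2-\xi}\log\mathfrak{w}$. Your proposed repair --- ``each loop traversal increases the effective hitting opportunity without consuming the coupling budget'' --- does not hold: the maximal coupling decouples step by step regardless of whether the walk is traversing a loop, so loops do consume budget. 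As stated, the repair is not an argument but a hope, and the constraint $\xi < 1/5$ provides no leverage against an exponential-in-$R^{2\xi}$ loss.

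The paper avoids the coupling entirely, and the key is that $\mathcal{I}_R$ is a statement about self-avoiding paths \emph{in $\omega$}, not in $\widetilde{\omega}$. Since any trajectory of the $\omega$-walk that reaches $\ell_\infty$-distance $\geq \mathfrak{o}R^{\xi/4}$ from its start contains a self-avoiding subpath of that length, $\mathcal{I}_R$ forces the $\omega$-walk started at $x$ to remain in $B_{\c R^{\xi/4}}(x)$ until it hits $\mathcal{C}_{\widetilde{\omega}}$; hence $\{X_T \notin \mathcal{C}_{\widetilde{\omega}}\} \subseteq \{S > \varepsilon R^2\}$ with $S$ the exit time from that small ball. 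Because $\omega \in \mathsf{B}$, $\|X_n - x\|_2^2 - n$ is a $P_\omega^x$-martingale, so $E^y_\omega[S] \leq \c R^{\xi/2}$, and by Markov's inequality $P^y_\omega(S > \lfloor\varepsilon R^{\xi}\rfloor) \leq \c R^{-\xi/2} < \mathfrak{w}^{-2}$ for $R$ large. Iterating this over $o = \lfloor R^{2-\xi}\rfloor$ windows of length $\lfloor\varepsilon R^{\xi}\rfloor$ gives $\mathfrak{w}^{-2o} \leq \mathfrak{w}^{-R^{2-\xi}}$. Note two further improvements over your sketch: working directly with $\omega$ makes the quantity you call $q_0$ not merely bounded below but asymptotically $1$ (namely $1-\c R^{-\xi/2}$), and the confinement radius from $\mathcal{I}_R$ is $R^{\xi/4}$ rather than the $R^{\xi}$ you get from $\mathcal{J}_R$, which is what makes the window $R^{\xi}$ rather than $R^{2\xi}$ and hence the number of windows $R^{2-\xi}$ come out correctly. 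In short, replace steps (a)--(c) by: use $\mathcal{I}_R$ to confine the $\omega$-walk, then apply the balanced-environment martingale estimate directly to the $\omega$-walk. No coupling, no loop-erasure upgrade, and no delicate parameter balancing are required.
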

Next, we put these pieces together. The optional stopping theorem and \Cref{lem: strong sink bound,lem: prob strong sink bound} yield that 
\begin{align*}
u (\hat{x}) 
&= E^x_\omega \big[ u (X_T, T + t) \1_{\{X_T \in \mathcal{C}_{\widetilde{\omega}}\}}\big] + E^x_\omega \big[ u (X_T, T + t) | X_T \not \in \mathcal{C}_{\widetilde{\omega}} \big] P_\omega^x (X_T \not \in \mathcal{C}_{\widetilde{\omega}}) 
\\  &\leq M u (\hat{y}^*) \kappa^{- 2R^{2 \delta}} + E^x_\omega \big[ u (X_T, T + t) | X_T \not \in \mathcal{C}_{\widetilde{\omega}} \big] \mathfrak{w}^{- R^{2 - \xi}}.
\end{align*}
Now, rearranging and using \Cref{lem: key lemma PHI} shows that 
\[
E^x_\omega \big[ u (X_T, T + t) | X_T \not \in \mathcal{C}_{\widetilde{\omega}} \big] \geq M u (y^*) \big( 2^{\c R^\frac{1 - \delta}{2}} - \kappa^{-2R^{2 \delta}}\big) \mathfrak{w}^{R^{2 - \xi}}.
\]
Since \(2^{\c R^\frac{1 - \delta}{2}} - \kappa^{-2R^{2 \delta}} > 1\) for large enough \(R\), we obtained a contradiction to the growth assumption \eqref{eq: growth cond}. Except for the proofs of \Cref{lem: key lemma PHI,lem: strong sink bound,lem: prob strong sink bound}, which are given in the next subsection, the proof of \Cref{theo: PHI} is complete. \qed

\subsection{Proof of \Cref{lem: main lemma repair}}\label{sec: pf of main lemma repair}
For \(z \in \mathbb{Z}^d\) and \(n \in \mathbb{N}\) we write 
\[
C_n (z) \triangleq [-n, n]^d + (2n + 1) z.
\]
Adapting terminology from \cite{Berger18}, we call \(C_n (z)\) to be \(\omega\)-\emph{good}, if \(C_{2n} (z)\) contains a unique sink and for every \(x \in C_n(z)\) every self-avoiding path in \(\omega\) of length \(\geq n/10\) reaches the unique sink in \(C_n(z)\), cf. \cite[Lemma 3.6]{Berger18}.

Fix a small \(\epsilon > 0\). Then, by \cite[Lemma 3.6]{Berger18} there exists an \(N \in \mathbb{N}\) such that
\[
P \big(\big\{ \omega \in \Omega \colon C_N(z) \text{ is \(\omega\)-good} \big\}  \big) \geq 1 - \frac{\epsilon}{2}.
\]
Next, take the parameter \(\kappa \in (0, \frac{1}{2d})\) in the definition of \(\widetilde{J}\) small enough such that 
\[
P \big(\big\{ \omega \in \Omega \colon \exists_{i = k, \dots, 2d}\ \omega(0, e_k) < \kappa \big\}\big)\leq  \frac{\varepsilon}{2 |C_{2N}(0)|}.
\]
Let \(\mathcal{C}^{z}_\omega\) be a sink in \(Q_{2N}(z)\) w.r.t. the environment \(\omega\). In case there are several sinks, take one in an arbitrary manner.
Now, we have
\begin{align*}
P\big(\big\{  \omega \in \Omega \colon  \mathcal{C}^{z}_\omega = \mathcal{C}^{z}_{\widetilde{\omega}}\big\} \big) &\geq P \big(\big\{\omega \in  \Omega \colon \forall_{y \in C_{2N} (z)} \forall_{k= 1, \dots, 2d} \  \omega(y, e_k) \geq \kappa \big\} \big) 
\\&\geq 1 - |C_{2N}(z)| P \big( \big\{ \omega \in \Omega \colon  \exists_{k =  1,  \dots, 2d} \ \omega (0, e_k) < \kappa \big\} \big)
\geq 1 - \frac{\varepsilon}{2}.
\end{align*}
We say that \(C_{2N} (z)\) is \emph{very \(\omega\)-good}, if it is \(\omega\)-good and \(\mathcal{C}^{z}_\omega = \mathcal{C}^{z}_{\widetilde{\omega}}\).
Now, define the \(\{0,1\}\)-valued random variables
\[
G_z (\omega) \triangleq \1 \{C_{2N} (z) \text{ is very \(\omega\)-good}\}, \quad z \in \mathbb{Z}^d,
\]
and note that
\[
P(G_z = 1)\geq  1- \varepsilon.
\]
As the environment measure \(P\) is an i.i.d. measure, the random variables \(G_z\) and \(G_y\) are independent whenever \(\|x - y\|_\infty \geq 2\). Consequently, we can apply \cite[Theorem 0.0]{liggett1997} and conclude that in case we have chosen \(\varepsilon\) small enough from the beginning, the family \((G_z)_{z \in \mathbb{Z}^d}\) stochastically dominates supercritical Bernoulli site percolation. 
With abuse of notation, this means that the percolation process and \((G_z)_{z \in \mathbb{Z}^d}\) can be realized on the same probability space such that a.s.
\[
\1 \{z \in \mathcal{D} \} \leq G_z, \quad z \in \mathbb{Z}^d, 
\] 
where \(\mathcal{D}\) is the (a.s. unique) infinite cluster of the supercritical percolation process. 
Thus, we note that a.s.
\[
\bigcup_{z \in \mathcal{D}} \mathcal{C}^z_\omega \subseteq \mathcal{C}^z_{\widetilde{\omega}}.
\]
Denote by \(A_z\) the connected component of \(z\) in \(\mathbb{Z}^d \backslash \mathcal{D}\). In case \(z \in \mathcal{D}\) we have \(A_z = \emptyset\).
Furthermore, set 
\[
K^z_\omega \triangleq \begin{cases}|A_z \cup \partial A_z|,& z \not \in \mathcal{D},\\
1,& z \in \mathcal{D}.\end{cases}
\]
Of course, the \(\omega\)-dependence stems from \(\mathcal{D}\).
\begin{lemma}\label{lem: NoamLem1}
	For a.a. \(\omega\) every self-avoiding path in \(\omega\) with length \(|C_{2N} (0)| K^z_\omega + 1\) and starting value \(z\in \mathbb{Z}^d\) must have visited \(\mathcal{C}_{\widetilde{\omega}}\).
\end{lemma}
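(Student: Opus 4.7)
The plan is a coarse-graining pigeonhole argument on the partition $\{C_{2N}(y)\}_{y \in \mathbb{Z}^d}$ of the fine lattice into boxes of $|C_{2N}(0)| = (4N+1)^d$ points each. I would rely on two inputs: (i) for every $y \in \mathcal{D}$ the box $C_{2N}(y)$ is very $\omega$-good, so $\mathcal{C}^y_\omega = \mathcal{C}^y_{\widetilde\omega} \subseteq \mathcal{C}_{\widetilde\omega}$ and every self-avoiding $\omega$-path of length at least $2N/10$ that starts inside $C_{2N}(y)$ must visit this sink; (ii) the outer boundary $\partial A_z$ of the finite closed-cluster $A_z$ is contained in $\mathcal{D}$, since any point in $\mathbb{Z}^d \setminus \mathcal{D}$ adjacent to $A_z$ would already belong to the same connected component of $\mathbb{Z}^d \setminus \mathcal{D}$ as $z$. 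Any self-avoiding $\omega$-path projects to a coarse trajectory that is connected (consecutive fine steps lie in equal or adjacent coarse boxes), and can house at most $k(4N+1)^d$ distinct fine points if it is confined to $k$ coarse boxes.

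If $z \in \mathcal{D}$ then $K^z_\omega = 1$ and the path has $(4N+1)^d + 1$ distinct vertices. Since $(4N+1)^d + 1 \geq 2N/10$ for all relevant $N$, input (i) applied to $C_{2N}(z)$ immediately delivers a visit to $\mathcal{C}^z_\omega \subseteq \mathcal{C}_{\widetilde\omega}$.

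The substantive case is $z \notin \mathcal{D}$, where $A_z$ is a.s.\ finite and nonempty, because the complement of the infinite cluster in the dominating Bernoulli percolation consists of finite components once the initial $\varepsilon$ was chosen small enough. First I would bound the first entry time into $\partial A_z$: if the initial $|A_z|(4N+1)^d + 1$ distinct vertices of the path were all confined to the $|A_z|$ coarse boxes making up $A_z$, they could not in fact be distinct, a contradiction. Hence the path visits at least $|A_z|+1$ coarse boxes within that initial segment, and connectedness of its coarse trajectory together with $\partial A_z \subseteq \mathcal{D}$ forces it to enter a very $\omega$-good box $C_{2N}(y)$ with $y \in \partial A_z$ no later than the vertex of index $|A_z|(4N+1)^d + 1$. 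The residual length then satisfies
\[
|C_{2N}(0)|K^z_\omega + 1 - \bigl(|A_z|(4N+1)^d + 1\bigr) = |\partial A_z|(4N+1)^d \;\geq\; (4N+1)^d \;\geq\; \tfrac{2N}{10},
\]
so input (i) applied at $C_{2N}(y)$ forces the continuation to visit $\mathcal{C}^y_{\widetilde\omega} \subseteq \mathcal{C}_{\widetilde\omega}$. The only careful step is the verification that $\partial A_z \subseteq \mathcal{D}$ and $|\partial A_z| \geq 1$, both of which follow from the definition of $A_z$ and its finiteness.
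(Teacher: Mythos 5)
Your argument is correct and follows essentially the same pigeonhole-and-percolation strategy as the paper's proof: force the self-avoiding path to exhaust the coarse boxes indexed by $A_z$, so that it enters a very $\omega$-good box indexed by some $u\in\partial A_z\subset\mathcal D$, then invoke the goodness property together with $\mathcal C^u_\omega=\mathcal C^u_{\widetilde\omega}\subseteq\mathcal C_{\widetilde\omega}$. The only differences are bookkeeping: you tile by the larger boxes $C_{2N}(\cdot)$ and bound the first-entry index into a $\partial A_z$-box explicitly, whereas the paper tiles by $C_N(\cdot)$ and applies the pigeonhole to $\bigcup_{u\in A_z\cup\partial A_z}C_N(u)$, deducing the residual length implicitly from $|C_{2N}(0)|>|C_N(0)|$.
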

\begin{proof}
	Note that every self-avoiding path in \(\omega\) with length \(|C_{2N} (0)| K^z_\omega + 1\) and starting value \(z\) has visited a point \(y\) such that 
	\[
	y \not \in \bigcup_{u \in A_z \cup \partial A_z} C_{N}(u).
	\]
	Consequently, the path has crossed a cube \(C_{N} (u)\) with \(u \in \partial A_z\). As \(\partial A_z \subset \mathcal{D}\), for a.a. \(\omega \in \Omega\) we have \(G_u (\omega) = 1\) and the definition of very \(\omega\)-good implies that the path must have visited \(\mathcal{C}_{\widetilde{\omega}}\).
\end{proof}

The following lemma follows from \cite[Theorem 8, Remark 10]{guozeitouni12}.
\begin{lemma}\label{lem: NoamLem2}
	There exists an \(\alpha > 0\) such that for all \(k \in \mathbb{N}\)
	\[P (|A_z| \geq  k) \leq \c e^{- k^\alpha}.\]
\end{lemma}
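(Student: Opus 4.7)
The plan is to read the lemma as a statement purely about supercritical Bernoulli site percolation and apply \cite[Theorem 8, Remark 10]{guozeitouni12}. By translation invariance of the underlying Bernoulli process (and the canonical construction of the infinite cluster $\mathcal{D}$), the law of $|A_z|$ does not depend on $z$, so I may assume $z = 0$. The event $\{|A_0| \geq k\}$ is, by definition, the event that the connected component of $0$ in $\mathbb{Z}^d \setminus \mathcal{D}$ has cardinality at least $k$.

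The geometric content behind the estimate is the classical fact that any finite component of cardinality $\geq k$ in the complement of the infinite cluster of supercritical Bernoulli percolation on $\mathbb{Z}^d$ must be enclosed by a $\ast$-connected surface of sites lying outside $\mathcal{D}$ whose cardinality is at least $\c k^{(d-1)/d}$, by the discrete isoperimetric inequality. In the deep supercritical regime, such a surface is highly atypical, and its existence carries stretched-exponential probability. Translating this into a tail bound for $|A_0|$ yields $P(|A_0| \geq k) \leq \c e^{-k^\alpha}$ for some $\alpha = \alpha(d) \in (0,1)$, which is exactly the content of \cite[Theorem 8, Remark 10]{guozeitouni12}.

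The only point to verify is that our parameter lies in the regime of applicability of the cited theorem. By choosing $\kappa$ small enough and $N$ large enough in the construction of $G_z$, the probability $P(G_z=1)$ can be made arbitrarily close to $1$; the stochastic domination provided by \cite[Theorem 0.0]{liggett1997} then realises $(G_z)$ together with an independent Bernoulli site percolation of parameter arbitrarily close to $1$, whose infinite cluster is contained in $\mathcal{D}$. This places us well within the regime covered by \cite[Theorem 8, Remark 10]{guozeitouni12}, and the claimed stretched exponential bound follows. There is no real obstacle here beyond bookkeeping the chain of stochastic dominations already set up earlier in the section.
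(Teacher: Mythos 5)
Your proposal is correct and is essentially the paper's own argument: the paper's entire proof is the single sentence ``The following lemma follows from \cite[Theorem 8, Remark 10]{guozeitouni12},'' and you cite the same result. The supporting discussion you give (translation invariance, the isoperimetric/contour heuristic for stretched-exponential tails of holes in the infinite cluster, and the verification via \cite{liggett1997} that the dominated percolation parameter can be pushed as close to $1$ as needed) is useful context for why the citation applies, but it does not change the route.
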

Now, for \(\mathfrak{o} \triangleq 2 |C_{2N} (0)|\) and large enough \(R\), \Cref{lem: NoamLem1,lem: NoamLem2} yield that
\begin{align*}
P(\mathcal{I}_R) &\geq P(\mathcal{I}_R, \forall_{z \in B_{2R}}\ |A_z| < \lfloor R^{\xi/4}\rfloor )
\\&= P(\forall_{z \in B_{2R}}\ |A_z| < \lfloor R^{\xi/4}\rfloor )
\\&\geq 1 - \c |B_{2R}| e^{- \lfloor R^{\xi/4}\rfloor^\alpha}.
\end{align*}
This bound completes the proof of \Cref{lem: main lemma repair}. \qed
\subsubsection{Proof of \Cref{lem: key lemma PHI}}
The proof is based on an iterative scheme in the spirit of an argument by Fabes and Stroock \cite{Fabes1989}.
Let \((r_n)_{n \in \mathbb{Z}_+}\) be a sequence of radii defined as follows: 
\[
r_0 \triangleq R, \qquad  r_1 \triangleq \frac{\varepsilon R}{8 \alpha},\qquad r_n \triangleq \frac{r_1}{n^2}.
\]
Note that 
\[
\sum_{n =  0}^\infty \alpha r_n = \alpha R + \frac{\varepsilon R}{8} \sum_{n = 1}^\infty \frac{1}{n^2} \leq \alpha R + \frac{\varepsilon R}{4} < \Big(2 - \frac{\varepsilon}{2}\Big) R,
\]
and that 
\[
\sum_{n = 0}^\infty \alpha^2 r^2_n = \alpha^2 R^2 + \frac{\varepsilon^2 R^2}{64} \sum_{n = 1}^\infty \frac{1}{n^4} < \Big(2 - \frac{\varepsilon}{2}\Big)^2 R^2.
\]
Set \(\k \triangleq \max (n \in \mathbb{N} \colon r_n > R^\delta)\) and note that
\begin{align}\label{eq: K cond}
\frac{r_1}{n^2} > R^\delta \quad \Leftrightarrow\quad \sqrt{\frac{\varepsilon}{8 \alpha}} R^{\frac{1-\delta}{2}} > n\quad \Rightarrow\quad \k \geq \Big\lfloor \sqrt{\frac{\varepsilon}{8 \alpha}}R^{\frac{1-\delta}{2}} \Big\rfloor.
\end{align}
Next, we construct two sequences \((\hat{x}_n)_{n \in [\k]}\) and \((\hat{y}_n)_{n \in [\k]}\) of points in  \(K_{(2-\varepsilon/2)R}\) with the same parity as \(\hat{x}^*\). As initial points we take
\(\hat{x}_0 \triangleq \hat{x}^*\) and \(\hat{y}_0 \triangleq \hat{y}^*\). 

Before we explain mathematically how \(\hat{x}_{n + 1}\) and \(\hat{y}_{n + 1}\) are chosen once \(\hat{x}_n\) and \(\hat{y}_n\) are known,
we describe the idea in an informal manner, see also \Cref{fig: FS explain}.
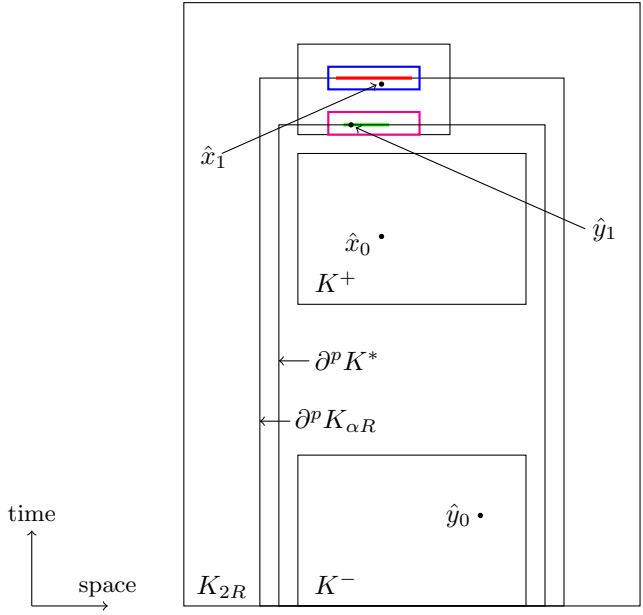
\begin{SCfigure}
	\caption{An illustration of the first step in the iteration procedure.}
	\label{fig: FS explain} 
	\begin{tikzpicture}
	\draw (0,0) rectangle (6,8);
	\draw (1.5, 0) rectangle (4.5, 2);
	\draw (1.5, 4) rectangle (4.5, 6);
	\draw (1,0) rectangle (5,7);
	\node at (0.5, 0.25) { \(K_{2R}\) };
	\node at (2, 0.3) { \(K^-\) };
	\node at (2, 4.3) { \(K^+\) };
	\draw[<-] (1, 2.45) -- (1.4, 2.45);
	\node at (2, 2.45) { \(\pp K_{\alpha R}\) };
	\draw[<-] (1.25, 3.25) -- (1.65, 3.25);
	\node at (2.15, 3.25) { \(\pp K^*\) };
	\fill (3.9, 1.2) circle (0.035);
	\node[left] at (3.9, 1.2) { \(\hat{y}_0\)};
	\fill (2.6, 4.9) circle (0.035);
	\node[left] at (2.6, 4.8) { \(\hat{x}_0\)};
	\draw[red, very thick] (2, 7) -- (3, 7);
	\draw (1.5,6.25) rectangle (3.5,7.45);
	\draw[blue, thick] (1.9,6.85) rectangle (3.1,7.15);
	\draw[magenta, thick] (1.9,6.25) rectangle (3.1,6.55);
	\fill (2.6, 6.92) circle (0.035);
	\draw[->] (0.5,6) -- (2.535,6.91);
	\node[left] at (0.7, 5.95) {\(\hat{x}_1\)};
	\draw[green, very thick] (2.1, 6.38) -- (2.7, 6.38);
	\draw[->] (5.28,5) -- (2.26,6.345);
	\node[right] at (5.25,5) {\(\hat{y}_1\)};
	\fill (2.2, 6.38) circle (0.035);
	\draw (1.25, 0) rectangle (4.75, 6.38);
	\draw[->] (- 2, 0) -- (-2, 1);
	\draw[->] (-2, 0) -- (-1, 0);
	\node[above] at (-2, 1) { \small time};
	\node[above] at (-1, 0) { \small space};
	\end{tikzpicture}
\end{SCfigure}
The initial step is to show the existence of a subset \(\alpha R A_k\) (red in \Cref{fig: FS explain}) of \(\pp K_{\alpha R}\) with the two properties that it can be reached by the space-time walk starting at \(\hat{x}_0\) and that \(\max_{\alpha R A_k} u\) and \(\max_{\alpha R A_k} u/ \min_{\alpha R A_k} u\) are reasonably large compared to \(u(\hat{x}_0)\) and \(u(\hat{x}_0)/ u(\hat{y}_0)\), respectively. 
The oscillation inequality shows the existence of a cylinder \(K^u\) (blue in \Cref{fig: FS explain}) containing \(\alpha R A_k\) in which the ratio \(\max_{K^u} u/\max_{\alpha R A_k} u\) is reasonably large. Using these properties, we obtain that
\[
\max_{K^u} u \gg \max_{\alpha R A_k} u\ \gg u(\hat{x}_0),
\]
where \(b \gg a\) means that \(b\) is in some sense larger than \(a\).
We now take \(\hat{x}_1\) to be the point in \(K^u\) (with the correct parity) where \(u\) attains its maximum. The next step then is to chose \(\hat{y}_1\) and to iterate. Before we comment on how \(\hat{y}_1\) is chosen, let us stress that the sequence \((\hat{x}_n)_{n \in [\k]}\) grows fast and the terminal point \(\hat{x}_\k\) will have the properties as described in the second part of \Cref{lem: key lemma PHI}.
Since we want to iterate, the point \(\hat{y}_1\) should be an element of a shifted version of \(K^u\), say \(K^l\) (magenta in \Cref{fig: FS explain}). Suppose that \(\theta_k R C_k\) (green in \Cref{fig: FS explain}) is a subset of \(K^l\) and part of the boundary of a cylinder \(K^*\). We will chose \(\theta_k\) and \(C_k\) such that the space-time walk starting at \(\hat{y}_0\) has a reasonable probability of exiting \(K^*\) through \(\theta_k R C_k\). Then, we take \(\hat{y}_1\) to be the point in \(\theta_k R C_k\) (with the correct parity) where \(u\) attains its minimum. 
We proceed the iteration up to time \(\k\).

We now make this precise.
The first step is based on the definition of \(\mathcal{U}_{\hat{z}_n, r_n}\). Due the Harnack inequality for Brownian motion, we have  
\[
\chi_{\hat{z}_n, \alpha r_n, i}(\hat{x}_n) \leq  H \chi_{\hat{z}_n, \alpha r_n, i}(\hat{y}_n).
\]
Using \(\omega \in \mathcal{U}_{\hat{z}_n, r_n}\), we obtain 
\[
\frac{\Phi_{\hat{z}_n, \alpha r_n, i} (\hat{x}_n)}{\chi_{\hat{z}_n, \alpha r_n, i}(\hat{x}_n)} = 1 + \frac{\Phi_{\hat{z}_n, \alpha r_n, i} (\hat{x}_n) - \chi_{\hat{z}_n, \alpha r_n, i}(\hat{x}_n)}{\chi_{\hat{z}_n, \alpha r_n, i}(\hat{x}_n)} \leq 1 + \varepsilon.
\]
Similarly, we see that \(\Phi_{\hat{z}_n, \alpha r_n, i} (\hat{y}_n) \geq (1 - \varepsilon) \chi_{\hat{z}_n, \alpha r_n, i} (\hat{y}_n)\). Therefore, we obtain 
\begin{equation}\label{eq: help1}
\begin{split}
\Phi_{\hat{z}_n, \alpha r_n, i} (\hat{x}_n) &\leq (1 + \varepsilon) \chi_{\hat{z}_n, \alpha r_n, i}(\hat{x}_n) \\&\leq H (1 + \varepsilon)\chi_{\hat{z}_n, \alpha r_n, i}(\hat{y}_n) \\&\leq \frac{(1 + \varepsilon)H \Phi_{\hat{z}_n, \alpha r_n, i}(\hat{y}_n)}{1 - \varepsilon}.
\end{split}
\end{equation}
In the following we use the short notation \(A_{n, k} \triangleq \Theta^p(\alpha r_n A_k(\hat{z}_n))\).
\begin{lemma}\label{lem: existence k}
	There exists \(k \in \{1, \dots, N\}\) such that 
	\[
	\max_{A_{n , k}} u > \frac{\varepsilon u(\hat{x}_n)}{1 + 3 \varepsilon} , \quad \max_{A_{n , k}} u > \frac{(1 - \varepsilon) \min_{A_{n , k}} u}{ (1 + 3\varepsilon) H} \frac{u(\hat{x}_n)}{u(\hat{y}_n)}.
	\]
\end{lemma}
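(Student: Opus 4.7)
The plan is to argue by contradiction: suppose that for each $k \in \{1,\dots,N\}$ at least one of the two displayed inequalities fails, and split $\{1,\dots,N\}$ into
\[
B \triangleq \bigl\{k : \max\nolimits_{A_{n,k}} u \leq \tfrac{\varepsilon u(\hat{x}_n)}{1+3\varepsilon}\bigr\}, \qquad C \triangleq \{1,\dots,N\}\setminus B.
\]
By hypothesis every $k \in C$ then satisfies $\max_{A_{n,k}} u \leq \tfrac{(1-\varepsilon)\min_{A_{n,k}} u}{(1+3\varepsilon)H}\cdot\tfrac{u(\hat{x}_n)}{u(\hat{y}_n)}$, and the goal is to combine these two failure bounds, together with \eqref{eq: help1}, into the contradictory estimate $u(\hat{x}_n) < u(\hat{x}_n)$.

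The workhorse is the optional stopping theorem for the $\omega$-caloric $u$ at the exit time $\rho$ from $K_{\alpha r_n}(\hat{z}_n)$. Since the space-time walk preserves the parity $p$ of $\hat{x}_n$ and $\hat{y}_n$, and since $\{A_{n,k}\}_{k=1}^N$ is (after a harmless refinement consistent with \eqref{eq: diam assp}) an essentially disjoint cover of $\Theta^p(\pp K_{\alpha r_n}(\hat{z}_n))$, one gets
\[
u(\hat{x}_n) \leq \sum_{k=1}^N \Phi_{\hat{z}_n,\alpha r_n,k}(\hat{x}_n)\,\max\nolimits_{A_{n,k}} u, \qquad u(\hat{y}_n) \geq \sum_{k=1}^N \Phi_{\hat{z}_n,\alpha r_n,k}(\hat{y}_n)\,\min\nolimits_{A_{n,k}} u.
\]

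Splitting the first sum according to $B$ and $C$, the $B$-contribution is immediately bounded by $\tfrac{\varepsilon u(\hat{x}_n)}{1+3\varepsilon}$ using $\sum_{k\in B}\Phi_{\hat{z}_n,\alpha r_n,k}(\hat{x}_n) \leq 1$. For the $C$-contribution, I would first invoke \eqref{eq: help1} to replace $\Phi_{\hat{z}_n,\alpha r_n,k}(\hat{x}_n)$ by $\tfrac{(1+\varepsilon)H}{1-\varepsilon}\Phi_{\hat{z}_n,\alpha r_n,k}(\hat{y}_n)$, and then use the failure of the second inequality on $C$ together with the lower bound on $u(\hat{y}_n)$ to obtain
\[
\sum_{k\in C}\Phi_{\hat{z}_n,\alpha r_n,k}(\hat{y}_n)\,\max\nolimits_{A_{n,k}} u \leq \tfrac{(1-\varepsilon) u(\hat{x}_n)}{(1+3\varepsilon) H u(\hat{y}_n)}\,u(\hat{y}_n) = \tfrac{(1-\varepsilon) u(\hat{x}_n)}{(1+3\varepsilon) H}.
\]
Chaining the estimates yields $u(\hat{x}_n) \leq \tfrac{\varepsilon u(\hat{x}_n)}{1+3\varepsilon}+\tfrac{(1+\varepsilon)u(\hat{x}_n)}{1+3\varepsilon}=\tfrac{1+2\varepsilon}{1+3\varepsilon}u(\hat{x}_n)$, which is the desired contradiction whenever $u(\hat{x}_n)>0$ (the case $u(\hat{x}_n)=0$ makes both required inequalities trivial).

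The only delicate point I foresee is the essential disjointness of the discrete cover $\{A_{n,k}\}_k$: without it the lower bound for $u(\hat{y}_n)$ would overcount points that project onto the boundary overlaps $\partial\A_k\cap\partial\A_j$. This is handled by exploiting the freedom in the choice of $\{\A_k\}$, which was only constrained by \eqref{eq: diam assp}; generic perturbations ensure that no discrete boundary point of $K_{\alpha r_n}(\hat{z}_n)$ projects onto $\bigcup_{k\neq j}(\partial\A_k\cap\partial\A_j)$. Once this is settled the entire argument collapses into the clean arithmetic $\varepsilon+(1+\varepsilon)=1+2\varepsilon < 1+3\varepsilon$, which is the precise purpose of the constant $1+3\varepsilon$ appearing in the statement.
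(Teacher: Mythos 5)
Your argument matches the paper's own proof essentially verbatim: your set $B$ is the complement of the paper's $\Theta$, and the chain of estimates via optional stopping, \eqref{eq: help1}, and the two failure bounds leading to $u(\hat{x}_n) \le \tfrac{1+2\varepsilon}{1+3\varepsilon}u(\hat{x}_n)$ is exactly what the paper does. The delicacy you flag about essential disjointness of the discretized cover $\{A_{n,k}\}_k$ (so that $\sum_k\Phi_{\hat z_n,\alpha r_n,k}=1$) is real and is elided in the paper; your proposed remedy (a generic choice of $\{\A_k\}$, or equivalently assigning each discrete boundary point to a single $\A_k$ in case of ties) is adequate.
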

\begin{proof}
	We denote 
	\[
	\Theta \triangleq \Big\{k \in \{1, \dots, N\} \colon \max_{A_{n , k}} u > \frac{\varepsilon u(\hat{x}_n)}{1 + 3 \varepsilon}\Big\}.
	\]
	Note that 
	\begin{align}\label{eq: ineq1}
	\sum_{k \not \in \Theta} \max_{A_{k, n}} u\ \Phi_{z_n, \alpha r_n, k}(\hat{x}_n)  \leq \frac{\varepsilon}{1 + 3 \varepsilon} \sum_{k \not \in \Theta} u(\hat{x}_n) \Phi_{z_n, \alpha r_n, k} (\hat{x}_n) \leq \frac{\varepsilon u(\hat{x}_n)}{1 + 3 \varepsilon} .
	\end{align}
	This yields that
	\begin{align*}
	\sum_{k \in \Theta} \max_{A_{n , k}} u\ \Phi_{z_n, \alpha r_n, k} (\hat{x}_n) &= \sum_{k =  1}^N \max_{A_{n , k}} u\ \Phi_{z_n, \alpha r_n, k} (\hat{x}_n) - \sum_{k \not \in \Theta} \max_{A_{n , k}} u\ \Phi_{z_n, \alpha r_n, k} (\hat{x}_n) \\&\geq \frac{1 + 2\varepsilon}{1 + 3 \varepsilon} u(\hat{x}_n).
	\end{align*}
Since the last term is positive, we conclude that \(\Theta \not = \emptyset\). 
	
	For contradiction, assume that for all \(k \in \Theta\)
	\begin{align}\label{eq: ineq2}
	\ \max_{A_{n , k}} u \leq \frac{(1 - \varepsilon)\min_{A_{n , k}} u}{(1 + 3\varepsilon) H} \frac{u(\hat{x}_n)}{u(\hat{y}_n)}.
	\end{align}
	Using the optional stopping theorem, \eqref{eq: help1}, \eqref{eq: ineq1} and \eqref{eq: ineq2} yields that 
	\begin{align*}
	u(\hat{x}_n) 
	&\leq \sum_{k \in \Theta} \max_{A_{n , k}} u\ \Phi_{z_n, \alpha r_n, k} (\hat{x}_n) + \sum_{k  \not \in \Theta} \max_{A_{n , k}} u \ \Phi_{z_n, \alpha r_n, k} (\hat{x}_n)
	\\
	&\leq \sum_{k \in \Theta} \max_{A_{k, n}} u\ \Phi_{z_n, \alpha r_n, k} (\hat{x}_n) + \frac{\varepsilon u(\hat{x}_n)}{1 + 3 \varepsilon}
	\\&\leq \sum_{k \in \Theta} \frac{(1 - \varepsilon)\min_{A_{n , k}} u}{(1 + 3\varepsilon) H} \frac{u(\hat{x}_n)}{u(\hat{y}_n)} \Phi_{z_n, \alpha r_n, k} (\hat{x}_n) + \frac{\varepsilon u(\hat{x}_n)}{1 + 3 \varepsilon}
	\\&\leq \frac{(1 + \varepsilon) u(\hat{x}_n)}{(1 + 3 \varepsilon)u(\hat{y}_n)}\sum_{k \in \Theta} \min_{A_{n , k}} u\ \Phi_{z_n, \alpha r_n, k} (\hat{y}_n) + \frac{\varepsilon u(\hat{x}_n)}{1 + 3 \varepsilon}
	\\&\leq \frac{(1 + 2 \varepsilon) u(\hat{x}_n)}{1 + 3 \varepsilon}.
	\end{align*}
	This is a contradiction. The proof is complete.
\end{proof}
Let \(k \in \{1, \dots, N\}\) be as in \Cref{lem: existence k} and take \(\hat{z}_{n + 1} \in K_{2R}\) such that 
\[
A_{n, k} \subseteq K_{\alpha \nu r_n} (\hat{z}_{n + 1}).
\]
Due to \Cref{lem: existence k}, we have
\begin{align}\label{eq: bound from lemma}
\frac{(1 - \varepsilon) u(\hat{x}_n)}{(1 + 3\varepsilon) H u (\hat{y}_n)} \leq \frac{\max_{\Theta^p (K_{\alpha \nu r_n} (\hat{z}_{n + 1}))} u}{\min_{\Theta^p (K_{\alpha \nu r_n} (\hat{z}_{n + 1}))} u}.
\end{align}
Now, we explain how \(\nu\) has to be chosen. Namely, take \(\nu\) such that
\begin{align*}
\nu \cdot \alpha\cdot \zeta^{\frac{\log(\mathfrak{t})}{- \log (\gamma)}} \leq \inf_{i \in \mathbb{Z}_+} \Big(\frac{r_{i + 1}}{r_i}\Big),
\end{align*}
where \(\mathfrak{t}>1\) is a constant we determine later. With this choice of \(\nu\) we can apply the oscillation inequality and obtain that
\begin{align}\label{eq: useful osc bound}
\underset{\Theta^p (K_{r_{n + 1}} (\hat{z}_{n + 1}))} \osc u \geq \mathfrak{t} \underset{\Theta^p (K_{\alpha \nu r_n} (\hat{z}_{n + 1}))} \osc u.
\end{align}
Using \eqref{eq: bound from lemma} and \eqref{eq: useful osc bound}, we further obtain that
\begin{equation}\label{eq: main bound final lem pf}\begin{split}
\frac{\max_{\Theta^p (K_{r_{n + 1}} (\hat{z}_{n + 1}))} u}{\max_{A_{n, k}} u} &= \frac{\min_{\Theta^p (K_{r_{n + 1}} (\hat{z}_{n + 1}))} u}{\max_{A_{n, k}} u} + \frac{\osc_{\Theta^p (K_{r_{n + 1}} (\hat{z}_{n + 1}))} u}{\max_{A_{n, k}} u}
\\&\geq \frac{\mathfrak{t} \osc_{\Theta^p (K_{\alpha \nu r_n} (\hat{z}_{n + 1}))}u}{\max_{\Theta^p (K_{\alpha \nu r_n} (\hat{z}_{n + 1}))}u}
\\&=  \mathfrak{t}\cdot \Big(1 - \frac{\min_{\Theta^p (K_{\alpha \nu r_n} (\hat{z}_{n + 1}))} u}{\max_{\Theta^p (K_{\alpha \nu r_n} (\hat{z}_{n + 1}))} u}\Big)
\\&\geq  \mathfrak{t}\cdot \Big(1 - \frac{ (1 + 3 \varepsilon) H u(\hat{y}_n)}{(1 - \varepsilon) u(\hat{x}_n)} \Big).
\end{split}
\end{equation}
Let \(\hat{x}_{n + 1}\) be the point where \(u\) attains its maximum on \(\Theta^p (K_{r_{n + 1}} (\hat{z}_{n + 1}))\).

Next, we explain how \(\hat{y}_{n + 1}\) and \(\mathfrak{t}\) are chosen. At this point we also explain how \(\{\mathbb{C}_1,  \dots, \mathbb{C}_N\},\) \(\theta_1, \dots, \theta_N\) and \(\delta^*\) are chosen. Take  \(\mathbb{C}_k\) and \(\theta_k\) such that there is a cylinder \(K_{\theta_k r_n}(\hat{u})\) with \(\hat{y}_n \in K_{\theta_k r_n}(\hat{u})\) and \(\theta_k r_n C_k (\hat{u})\subset K_{r_{n + 1}} (\hat{z}_{n + 1}) - (0, 2 r^2_{n + 1})\), see \Cref{fig: FS explain}. Here, \(\theta_k r_n C_k(\hat{u})\) is defined in the same manner as for \(\{\A_1, \dots, \A_N\}\). Recalling that \(\omega \in \mathcal{U}^*_{\hat{u}, \theta_k r_n}\), we can take \(\delta^*\) small enough such that there exists a uniform constant \(\mathfrak{m} > 1\) such that \(\Phi^*_{\hat{u}, \theta_k r_n, k} (\hat{y}_n) \geq \mathfrak{m}^{-1}\).
Then, take \(\hat{y}_{n + 1}\) to be the point in \(\Theta^p(\theta_k r_n C_k(\hat{u}))\) where \(u\) attains its minimum.
The optional stopping theorem yields that 
\begin{align}\label{eq: y_n ineq}
u(\hat{y}_n) \geq \frac{u(\hat{y}_{n + 1})}{\mathfrak{m}}.
\end{align}
We now impose an assumption on \(\mathfrak{t}\):
\begin{align}\label{eq: t cond}
\mathfrak{t} \geq 
\frac{2 \mathfrak{m} (1 + 3 \varepsilon)}{\varepsilon^2}.
\end{align}
Using \Cref{lem: existence k} and \eqref{eq: main bound final lem pf}, we obtain that
\begin{equation}\label{eq: imp ineq}
\begin{split}
u(\hat{x}_{n + 1}) &\geq \mathfrak{t} \Big(1 - \frac{(1 + 3\varepsilon) H u(\hat{y}_n)}{(1 - \varepsilon) u(\hat{x}_n)} \Big) \max_{A_{n, k}} u
\\&\geq \frac{\mathfrak{t}\varepsilon}{1 + 3 \varepsilon}   \Big(1 - \frac{ (1 + 3\varepsilon) H u(\hat{y}_n)}{(1 - \varepsilon) u(\hat{x}_n)} \Big) u(\hat{x}_n).
\end{split}
\end{equation}

\begin{lemma}\label{lem: ind final proof}
	For \(n \in [\k - 1]\) we have
	\[
	1 - \frac{ (1 + 3\varepsilon) H u(\hat{y}_{n + 1})}{(1 - \varepsilon) u(\hat{x}_{n + 1})} \geq \varepsilon\quad \Big(\Leftrightarrow\quad u(\hat{x}_{n + 1}) \geq \frac{(1 + 3 \varepsilon) H u(\hat{y}_{n + 1})}{(1 - \varepsilon)^2}\Big).
	\]
\end{lemma}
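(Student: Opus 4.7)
The plan is to establish the inequality by induction on $n$, with the base case $n=0$ serving as the anchor and the inductive step being a direct consequence of inequalities \eqref{eq: imp ineq}, \eqref{eq: y_n ineq} and the choice of $\mathfrak{t}$ encoded in \eqref{eq: t cond}.

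For the base case, observe that the claimed inequality at level $n$ is equivalent to the ratio bound
\[
\frac{u(\hat{x}_n)}{u(\hat{y}_n)} \geq \frac{(1+3\varepsilon)H}{(1-\varepsilon)^2},
\]
and at level $n=0$ this is precisely the standing contradiction assumption \eqref{eq: cont assp PHI}. So we may simply start the induction from this assumption, restated in the equivalent form $1 - \frac{(1+3\varepsilon)H u(\hat{y}_0)}{(1-\varepsilon)u(\hat{x}_0)} \geq \varepsilon$.

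For the inductive step, suppose the bound holds at level $n$. Plugging this lower bound into the factor $(1 - (1+3\varepsilon)Hu(\hat{y}_n)/((1-\varepsilon)u(\hat{x}_n)))$ on the right-hand side of \eqref{eq: imp ineq} yields
\[
u(\hat{x}_{n+1}) \geq \frac{\mathfrak{t}\varepsilon^2}{1+3\varepsilon}\, u(\hat{x}_n).
\]
Combining this with the minimum-preservation bound \eqref{eq: y_n ineq}, which rearranges to $u(\hat{y}_{n+1}) \leq \mathfrak{m}\, u(\hat{y}_n)$, we obtain
\[
\frac{u(\hat{x}_{n+1})}{u(\hat{y}_{n+1})} \geq \frac{\mathfrak{t}\varepsilon^2}{\mathfrak{m}(1+3\varepsilon)} \cdot \frac{u(\hat{x}_n)}{u(\hat{y}_n)}.
\]
The choice of $\mathfrak{t}$ in \eqref{eq: t cond} guarantees $\mathfrak{t}\varepsilon^2/(\mathfrak{m}(1+3\varepsilon)) \geq 2$, so together with the inductive hypothesis we deduce
\[
\frac{u(\hat{x}_{n+1})}{u(\hat{y}_{n+1})} \geq 2 \cdot \frac{(1+3\varepsilon)H}{(1-\varepsilon)^2} \geq \frac{(1+3\varepsilon)H}{(1-\varepsilon)^2},
\]
which is exactly the bound at level $n+1$, closing the induction.

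There is no real obstacle here; the lemma is a bookkeeping step whose purpose is to confirm that the recursive scheme for $(\hat{x}_n, \hat{y}_n)$ does not just maintain but actually \emph{amplifies} the initial max-to-min ratio by a factor of $2$ per iteration. The only thing to verify carefully is that the relevant ratio estimate at level $n$ is genuinely needed to lower-bound the parenthetical factor in \eqref{eq: imp ineq} (otherwise that factor could degenerate or become negative), and that the calibration $\mathfrak{t} \geq 2\mathfrak{m}(1+3\varepsilon)/\varepsilon^2$ is tight enough to absorb both the factor $(1+3\varepsilon)/\varepsilon$ coming from \eqref{eq: imp ineq} and the factor $\mathfrak{m}$ coming from \eqref{eq: y_n ineq}, which is exactly the content of \eqref{eq: t cond}.
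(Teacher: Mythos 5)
Your proof is correct and follows exactly the same induction as the paper: anchor at \eqref{eq: cont assp PHI}, combine \eqref{eq: imp ineq} with the inductive hypothesis to get $u(\hat{x}_{n+1}) \geq \frac{\mathfrak{t}\varepsilon^2}{1+3\varepsilon}u(\hat{x}_n)$, then use \eqref{eq: y_n ineq} and \eqref{eq: t cond} to close the step. Reformulating everything in terms of the ratio $u(\hat{x}_n)/u(\hat{y}_n)$ is a cosmetic variant of the paper's bookkeeping, not a different argument.
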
\begin{proof}
	We use induction.
	For \(n = 0\) the claim follows from \eqref{eq: cont assp PHI}. Suppose that the claim holds for \(n \in [\k - 2]\). 
	Together with \eqref{eq: y_n ineq}, the induction hypothesis yields that
	\[
	\frac{(1 + 3 \varepsilon)H u(\hat{y}_{n + 1})}{(1 - \varepsilon)^2 \mathfrak{m}}  \leq \frac{(1 + 3 \varepsilon)Hu(\hat{y}_n)}{(1 - \varepsilon)^2}  \leq u(\hat{x}_n).
	\]
	Using this bound, \eqref{eq: imp ineq} and the induction hypothesis again, we obtain that 
	\[
	u(\hat{x}_{n + 1}) \geq \frac{\mathfrak{t} \varepsilon^2 u(\hat{x}_{n})}{1 + 3 \varepsilon} \geq \frac{\mathfrak{t} \varepsilon^2 H u(\hat{y}_{n + 1})}{(1 - \varepsilon)^2 \mathfrak{m}}.
	\]
	The assumption \eqref{eq: t cond} implies the claim.
\end{proof}
Now, \eqref{eq: t cond}, \eqref{eq: imp ineq} and \Cref{lem: ind final proof} yield that
\[
u(\hat{x}_{n + 1}) \geq \frac{\mathfrak{t} \varepsilon^2}{1 + 3 \varepsilon} u(\hat{x}_n) \geq 2 \mathfrak{m} u(\hat{x}_n).
\]
Inductively, we see that 
\[
u(\hat{x}_\k) \geq 2^\k \mathfrak{m}^\k u(\hat{x}_0)
\]
and \eqref{eq: K cond} completes the proof of the second claim in \Cref{lem: key lemma PHI} with \(M = \mathfrak{m}^\k\). To see that the first claim holds, note that 
\[
u(\hat{y}_K) \leq \mathfrak{m}^\k u(\hat{y}_0) = \mathfrak{m}^\k u(\hat{y}^*).
\]
Thus, the first claim follows from the argument we used to generate \((y_n)_{n \in [\k]}\). The proof is complete.
\qed
\subsection{Proof of \Cref{lem: strong sink bound}}
By the first part of \Cref{lem: key lemma PHI} there exists a point \(\hat{y}\) of the same parity as \(\hat{x}^*\) such that its space coordinate is in \(B_{2R}\) and at most at distance \(R^\delta\) from those of \(\hat{z}\), the time coordinate of \(\hat{y}\) is at least at distance \(R^{2\delta}\) and at most at distance \(2R^{2 \delta}\) from those of \(\hat{z}\), and \(u(\hat{y}) \leq M u(\hat{y}^*)\).
We now distinguish two cases. 

First, if \(\hat{y} \in \mathcal{C}_{\widetilde{\omega}}\) we use \(\omega \in \mathcal{S}_R\) and the optional stopping theorem to obtain that
\(
u(\hat{y}) \geq u(\hat{z}) \kappa^{2R^{2 \delta}},
\)
provided \(R\) is large enough. This yields the claim.

Second, if \(\hat{y} \not \in \mathcal{C}_{\widetilde{\omega}}\) we guide the walk into \(\mathcal{C}_{\widetilde{\omega}}\). Since \(\omega \in \mathcal{J}_R\), the worst case is that \(\hat{y}\) is in a hole of \(\mathcal{C}_{\widetilde{\omega}}\) of radius \(\lfloor R^{\xi}\rfloor\). As \(\omega \in \mathsf{B}\), with probability at least \(\frac{1}{2d}\) the walk in \(\omega\) goes a step in direction of the boundary of the hole. Thus, with probability at least \((2d)^{-d \lfloor R^{\xi}\rfloor}\) the walk is in \(\mathcal{C}_{\widetilde{\omega}}\). Recalling that \(\xi < \delta\) and that \(\kappa < \frac{1}{2d}\), the claim follows as before.
\qed

\subsection{Proof of \Cref{lem: prob strong sink bound}}
Recall that \(\omega \in \mathcal{I}_R\). Thus, to be at time \(T\) not in \(\mathcal{C}_{\widetilde{\omega}}\), the walk may not leave the ball \(B_{\mathfrak{o}\sqrt{d}R^{\xi/4}} (x) \equiv B_{\c R^{\xi/4}} (x)\) before it leaves the cylinder \(K_{2R}\), which is necessarily via its time boundary when \(R\) is large enough. In other words, we have
\[
\big\{X_T \not \in \mathcal{C}_{\widetilde{\omega}}\big\} \subseteq \big\{S > \varepsilon R^2\big\},
\]
where 
\[
S \triangleq \inf(n \in \mathbb{Z}_+ \colon X_n \not \in B_{\c R^{\xi / 4}} (x)).\]
Set \(o \triangleq \lfloor R^{2 - \xi} \rfloor\). We show by induction that for \(n = 1, \dots, o\)
\begin{align}\label{eq: to show by induction}
\sup \big( P^y_\omega (S > n \lfloor \varepsilon R^{\xi} \rfloor ) \colon y \in B_{\c R^{\xi/4}}(x)\big) \leq \mathfrak{w}^{- 2 n}.
\end{align}
For the induction base note that for all \(y \in  B_{\c R^{\xi/4}}(x)\)
\[
P^y_\omega (S > \lfloor \varepsilon R^{\xi} \rfloor ) \leq  \frac{E^y_\omega [S]}{\lfloor \varepsilon R^{\xi} \rfloor} \leq \C R^{- \xi/ 2} < \mathfrak{w}^{-2},
\]
in case \(R\) is large enough. For the induction step assume that \eqref{eq: to show by induction} holds for \(n \in \{1, \dots, o - 1\}\). The Markov property of the walk yields that for all \(y \in  B_{\c R^{\xi/4}}(x)\)
\begin{align*}
P^y_\omega (S > (n + 1) \lfloor \varepsilon R^{\xi}\rfloor) &= P^y_\omega (S > (n + 1) \lfloor \varepsilon R^{\xi}\rfloor, S > \lfloor \varepsilon R^{\xi}\rfloor)
\\&= E^y_\omega \big[ P_\omega^{X(\lfloor \varepsilon R^{\xi}\rfloor)} (S > n \lfloor \varepsilon R^{\xi}\rfloor) \1_{\{S > \lfloor \varepsilon R^{\xi}\rfloor\}}\big]
\\&\leq \mathfrak{w}^{- 2n} P^y_\omega (S > \lfloor \varepsilon R^{\xi}\rfloor) 
\\&\leq \mathfrak{w}^{- 2(n + 1)}. 
\end{align*}
Using \eqref{eq: to show by induction} with \(n = o\) yields that 
\[
P^x_\omega (S > \varepsilon R^2) \leq P^x_\omega (S > o \lfloor \varepsilon R^{\xi}\rfloor) \leq \mathfrak{w}^{- R^{2 - \xi}}.
\]
The lemma is proven. \qed

\appendix

\section{Proof of \Cref{theo:trans}} \label{sec:pf trans}
The proof is similar to those of \cite[Theorem 3.3.22]{Zeitouni2004}. The only differences are that instead of the EHI \cite[Lemma 3.3.8]{Zeitouni2004} one has to use \cite[Theorem 1.6]{Berger18} and that instead of \cite[Eq.  3.3.23]{Zeitouni2004} one can use the martingale property of the walk and the optional stopping theorem,  see also the proof of \cite[Theorem 2 (i)]{guozeitouni12}.

We give some details: Let \(R_0 \geq 1\) be a large constant and set \(R_i \triangleq R_0^i\) and 
\[
B^i (z)\triangleq  \big\{ x \in \mathbb{Z}^d \colon \|x - z\|_\infty < R_i\big\}, \quad i \in \N. 
\]
We shall also write \(B^i \triangleq B^i(0)\). 
Set 
\[
\tau_i \triangleq \inf (n \in \N \colon X_n \not \in B^i), \quad i \in \N.
\]
Due to \cite[Theorem 1.6]{Berger18}, provided \(R_0\) is large enough, there exist constants \(\gamma, \delta  > 0\) and a set \(G_i \in  \mathcal{F}\) such that for every \(\omega \in G_i\), every \(z \in \partial B^i\) and every \(x \in B^{i - 1}\) it holds that  
\[
\max_{y \in B^{i - 1} (z)} E_\omega^{y} \big[ \# \text{ visits of \(x\) before }  \tau_{i + 2} \big] \leq \gamma \min_{y \in B^{i - 1} (z)} E_\omega^{y} \big[ \# \text{ visits of \(x\) before }  \tau_{i + 2} \big],
\]
and
\[
P(G_i) \geq 1 - e^{- R_{i - 1}^{\delta}}.
\]
Let \((\theta^x)_{x \in \mathbb{Z}^d}\) be the canonical shifts on \(\Omega\), i.e. \((\theta^x \omega) (y, e) = \omega(x + y, e)\).
We obtain for every \(\omega \in G_i\) and all \(z \in \partial B^i\) that
\begin{align*}
\sum_{x \in B^{i-1}} E^{z}_{\theta^x \omega} \big[  \# \text{ visits of \(0\) before }  \tau_{i + 1} \big] 
&\leq \sum_{x \in B^{i-1}}\max_{y \in B^{i - 1} (z)} E^{y}_{\omega} \big[  \# \text{ visits of \(x\) before }  \tau_{i + 2} \big]
\\&\leq \sum_{x \in B^{i - 1}}\gamma \min_{y \in B^{i - 1} (z)} E^{y}_{\omega} \big[  \# \text{ visits of \(x\) before }  \tau_{i + 2} \big]
\\
&\leq \gamma E^{z}_{\omega} \big[  \# \text{ visits of \(B^{i-1}\) before }  \tau_{i + 2} \big]
\\&\leq \gamma E^{z}_\omega \big[ \tau_{i + 2} \big]
\\&\leq \gamma \c R_{i + 2}^2.
\end{align*}
Using the shift invariance of \(P\) and the Markov property of the walk yields that 
\begin{align*}
\int E^{0}_\omega &\big[ \# \text{ visits of 0 in } (\tau_{i}, \tau_{i + 1}] \big] P(d \omega) \\&= \frac{1}{|B^{i - 1}|} \int \sum_{y \in B^{i - 1}} E_{\theta^y \omega}^0 \big[ \# \text{ visits of 0 in } (\tau_{i}, \tau_{i + 1}] \big] P(d \omega) 
\\&\leq \frac{1}{|B^{i - 1}|} \int_{G_i} \sum_{y \in B^{i - 1}} E_{\theta^y \omega}^0 \big[E_{\theta^y \omega}^{X_{\tau_i}} \big[ \# \text{ visits of 0 before } \tau_{i + 1} \big]\big] P(d \omega) + \c R_{i + 1}^2 P(G_i^c)
\\&\leq \C \big(R_i^{2 - d} + R_{i + 1}^2 e^{- R^\delta_i}\big).
\end{align*}
Recalling that \(d \geq 3\) and summing over \(i\) shows that 
\[
\int E^{0}_\omega \big[ \# \text{ visits of 0} \big] P(d \omega) < \infty, 
\]
which implies that the walk is transient for \(P\)-a.a. environments. \qed

\section{Proof of \Cref{theo: ABP}} \label{app: ABP pf}
	We borrow ideas from the proofs of \cite[Theorem 3.1]{Berger2014} and \cite[Theorem 2.2]{deuschel2018}.
	Define
	\[
	M \triangleq \sup_{Q_R} u, \qquad \Theta \triangleq \big\{ (y, s) \in \mathbb{R}^{d + 1} \colon (2 + \sqrt{d}) R \|y\|_2 < s < \tfrac{M}{2}\big\}.
	\]
	W.l.o.g. we assume that \(M > 0\). 
	Note that 
	\begin{align*}
	\llambda(\Theta) = \int_0^{\frac{M}{2}} \Big( \int_{\mathbb{R}^d} \1_{\{\|x\|_2 < \frac{s}{2R}\}} dx\Big)ds 
	= \c \int_0^{\frac{M}{2}} \frac{s^d ds}{R^d}  = \frac{\c M^{d + 1}}{R^d},
	\end{align*}
	where \(\llambda\) denotes the Lebesgue measure.
	Consequently, we have 
	\[
	M = \c R^{\tfrac{d}{d + 1}} \llambda(\Theta)^{\tfrac{1}{d + 1}}.
	\]
	In other words, \eqref{eq: ABP} follows once we show that
	\begin{align}
	\llambda (\Theta) \leq \c \sum_{(y, s) \in \Gamma_u} \big|E^{y}_\omega \big[u(X_{T^{(k)}}, s + 1+ T^{(k)})\big] - u(y, s + 1) \big|^{d + 1}.
	\end{align}
	The proofs of the following lemmata are postponed till the proof of \Cref{theo: ABP} is complete.
	\begin{lemma}\label{lem: 1-C1}
		We have 
		\[
		\llambda(\Theta) \leq \sum_{(y, s) \in \Gamma_u}  (u(y, s) - u(y,s + 1)) \llambda(I(y, s)).
		\]
		(Note that \(u(y, s) - u(y, s + 1) \geq 0\) whenever \((y, s)\in \Gamma_u\).)
	\end{lemma}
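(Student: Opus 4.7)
The plan is a parabolic Aleksandrov--Bakelman--Pucci covering argument: for each $(p, q) \in \Theta$, I will produce a point $(y^*, s^*) \in \Gamma_u$ such that $p \in I_u(y^*, s^*)$ and $q + \langle p, y^*\rangle \in [u(y^*, s^*+1), u(y^*, s^*))$. The claimed inequality will then follow by summing, via Fubini, the Lebesgue measures of these ``fibres'' over $\Gamma_u$.

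Fix $(p, q) \in \Theta$, so that $\|p\|_2 (2+\sqrt{d})R < q < M/2$, and consider the auxiliary function $v(y, s) \triangleq u(y, s) - \langle p, y\rangle$ on $Q^k_R$. For $(y, s) \in \partial^k Q_R$ one has $u(y, s) \leq 0$, and since $k < R$ one checks that $\|y\|_2 \leq R + \sqrt{d}\, k \leq (1+\sqrt{d})R \leq (2+\sqrt{d})R$; hence $v(y, s) \leq \|p\|_2 \|y\|_2 \leq \|p\|_2 (2+\sqrt{d})R < q$ throughout $\partial^k Q_R$. Conversely, picking $(y_M, s_M) \in Q_R$ with $u(y_M, s_M) = M$ and using $\|y_M\|_2 < R$, one gets $v(y_M, s_M) > M - \|p\|_2 R > M/2 > q$.

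Define $s^*(p, q) \triangleq \max\{s : v(y, s) > q \text{ for some } (y, s) \in Q^k_R\}$ and let $y^*(p, q)$ maximise $v(\cdot, s^*)$ over $\{y : (y, s^*) \in Q^k_R\}$; by the previous paragraph this maximum is attained strictly inside $Q_R$. In particular $s^* < \lfloor R^2 \rfloor$, so $(y^*, s^*+1) \in Q^k_R$. For every $(x, t) \in Q^k_R$ with $t > s^*$ the choice of $s^*$ forces $v(x, t) \leq q < v(y^*, s^*)$, which rearranges to $u(y^*, s^*) - u(x, t) > \langle p, y^* - x\rangle$; thus $p \in I_u(y^*, s^*)$ and so $(y^*, s^*) \in \Gamma_u$. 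Specialising to $x = y^*$, $t = s^*+1$ gives $q \in [u(y^*, s^*+1) - \langle p, y^*\rangle,\ u(y^*, s^*) - \langle p, y^*\rangle)$.

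Writing $T(y, s) \triangleq \{(p', q') : p' \in I_u(y, s),\ q' + \langle p', y\rangle \in [u(y, s+1), u(y, s))\}$, the previous two paragraphs say precisely that $\Theta \subseteq \bigcup_{(y, s) \in \Gamma_u} T(y, s)$. Fubini gives $\llambda(T(y, s)) = \llambda(I_u(y, s)) \cdot (u(y, s) - u(y, s+1))$, the second factor being non-negative (substituting $x = y$, $t = s+1$ in the definition of $I_u$ with any $p \in I_u(y, s)$ shows $u(y, s) \geq u(y, s+1)$), and summing over the finite set $\Gamma_u$ completes the proof. I do not foresee a real obstacle: this is a routine geometric/covering step. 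The only point requiring care is the matching of the constant $(2+\sqrt{d})R$ in the definition of $\Theta$ to the $\ell^2$-diameter of $\partial^k Q_R$, which is exactly what makes the boundary estimate $v < q$ on $\partial^k Q_R$ go through.
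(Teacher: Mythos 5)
Your proposal is correct and takes essentially the same route as the paper's proof. Both define the auxiliary affine perturbation (your $v$, the paper's $\phi$) for a fixed point of $\Theta$, use the boundary estimate $u\leq 0$ on $\partial^k Q_R$ together with $\|y\|_2 \leq (1+\sqrt d)R$ to push the comparison off the parabolic boundary, locate the latest time $s^*$ at which the perturbed function exceeds the threshold and take the spatial maximizer there, and verify that the chosen point lies in $\Gamma_u$ with the required fiber membership; the final Fubini step for $\llambda(T(y,s))$ is precisely the paper's ``determinant one'' volume-preservation remark for the shear $(p',q)\mapsto(p',q-\langle y,p'\rangle)$.
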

	\begin{lemma}\label{lem: 2-C1}
		There exists an \(R_o > 0\) such that whenever \(R \geq R_o\) for all \((y, s) \in \Gamma_u\)
		\[
		\llambda(I(y, s)) \leq 4^{d}\big(E^{y}_\omega \big[u(X_{T^{(k)}}, s + 1+ T^{(k)})\big] - u(y, s) \big)_+^{d}.
		\]
	\end{lemma}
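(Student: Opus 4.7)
The plan is to analyze the contact set $I_u(y,s)$ geometrically and then translate the geometric bound into one involving the walk. Fix $(y,s) \in \Gamma_u$ and $p \in I_u(y,s)$. Since $(y,s) \in Q_R$ and $k \geq 1$, the nearest-neighbour points $(y \pm e_i, s+1)$ all lie in $Q_R^k$, so the defining condition of $I_u$ applies with these choices. Reading off both signs in each coordinate gives the two-sided bound
\[
u(y+e_i,s+1) - u(y,s) \;\leq\; p_i \;\leq\; u(y,s) - u(y-e_i,s+1), \qquad i = 1, \dots, d.
\]
Consequently $I_u(y,s)$ is contained in a product of $d$ intervals, whence
\[
\llambda(I_u(y,s)) \;\leq\; \prod_{i=1}^d\bigl(2u(y,s) - u(y-e_i,s+1) - u(y+e_i,s+1)\bigr),
\]
and AM-GM turns the product into
\[
\llambda(I_u(y,s)) \;\leq\; \Bigl(\tfrac{1}{d}\sum_{j=1}^{2d}\bigl(u(y,s) - u(y+e_j,s+1)\bigr)\Bigr)^d.
\]

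The remaining task is to compare this unweighted sum of nearest-neighbour increments at $y$ to the walk-averaged expression $E^y_\omega[u(X_{T^{(k)}}, s+1+T^{(k)})] - u(y,s+1)$ appearing on the right-hand side. Here the role of the stopping time $T^{(k)}$ is crucial: a single-step comparison via $\mathcal{L}_\omega u(y,s+1)$ yields only an $\omega(y,e_j)$-weighted version of the above sum and loses the directions in which $\omega$ degenerates, which we cannot afford in the non-elliptic regime. The definition of $T$ — the first time the walk has changed coordinate in every one of the $d$ directions — is precisely designed so that, in expectation at time $T^{(k)}$, each coordinate direction has been probed at least once. Using the balanced property (Lemma~\ref{lem: caloric mg}) to write the martingale decomposition
\[
u(X_{n\wedge T^{(k)}}, s+1+n\wedge T^{(k)}) - \sum_{m=0}^{n\wedge T^{(k)}-1}(\mathcal{L}_\omega u)(X_m, s+1+m)
\]
and applying the optional stopping theorem at $n \to \infty$ (noting $T^{(k)} \leq k$), one expresses
\[
E^y_\omega[u(X_{T^{(k)}}, s+1+T^{(k)})] - u(y,s+1)
\]
as an expectation of sums of local discrete Laplacians along the walk. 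The key step — and the main obstacle — is to combine the pointwise affine upper bound coming from $p \in I_u(y,s)$ with the way $T^{(k)}$ samples every direction to show that this expectation dominates, up to the multiplicative constant $4^d$, the unweighted sum above. This is where the arguments of Berger~\cite[Thm.~3.1]{Berger2014} and Deuschel--Guo--Ram\'irez~\cite[Thm.~2.2]{deuschel2018} enter: one performs a coordinate-by-coordinate induction on $\{1,\dots,d\}$, at each stage restricting to paths that have not yet changed coordinate in some direction, and repeatedly applies the $I_u(y,s)$-affine bound to the stopped walk; the condition \eqref{eq: cond} on $P^z_\omega(T > k)$ ensures that the part of the expectation coming from $\{T > k\}$ is negligible.

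Combining the two stages gives the asserted inequality (with the understanding that when the right-hand side is non-positive, $I_u(y,s)$ is a null set). The technical hurdle is entirely in the second stage; the geometric step is elementary and forces the constant $4^d$ via AM-GM and the factor accumulated in the direction-by-direction expansion.
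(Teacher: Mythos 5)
Your first stage (the geometric bound) is correct as far as it goes: applying the defining inequality of $I_u(y,s)$ to $(y \pm e_i, s+1)$ does confine $I_u(y,s)$ to a box, and AM--GM gives
\[
\llambda(I_u(y,s)) \le \Bigl(\tfrac{1}{d}\textstyle\sum_{j=1}^{2d}\bigl(u(y,s)-u(y+e_j,s+1)\bigr)\Bigr)^d.
\]
But the second stage, which you acknowledge is "the main obstacle," is not just a technical hurdle — as described, it cannot work. The quantity $\sum_j\bigl(u(y,s)-u(y+e_j,s+1)\bigr)$ depends on the values of $u$ at \emph{every} nearest neighbour of $y$ at time $s+1$, including in directions $e_j$ with $\omega(y,e_j)=0$. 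The walk started at $y$ never occupies those sites at time $s+1$ (it goes to a neighbour in a non-degenerate direction with probability one), and the law of $X_{T^{(k)}}$ simply carries no information about $u(y+e_j,s+1)$ for those $j$. Since $u$ is an arbitrary function on $Q^k_R$ — the maximum principle must hold for all $u$, not just caloric ones — one can make $u(y\pm e_j,s+1)$ as negative as one likes for a degenerate direction $j$ without changing $E^y_\omega[u(X_{T^{(k)}},s+1+T^{(k)})]$; this inflates your AM--GM bound arbitrarily while leaving the right-hand side of the lemma fixed. No coordinate-by-coordinate induction along the walk can fix this, because the obstruction is informational: the quantity you produced after AM--GM is not measurable with respect to what the walk observes.

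The paper's proof avoids this precisely by never introducing nearest-neighbour constraints. It constrains $I_u(y,s)$ with the half-spaces generated by the random walk itself: conditioning on the direction $\pm e_i$ of the first move in coordinate $i$ (the events $B^{(\pm)}_i$, with a tie-breaking coin for $\{u_i>k\}$), one defines vectors $\mathcal{O}_i \triangleq E^y_\omega[X_{T^{(k)}}\mid B^{(+)}_i]-y$. Applying the definition of $I_u$ to the random point $(X_{T^{(k)}},s+1+T^{(k)})$ and then averaging over $B^{(\pm)}_i$ pins $\langle \beta,\mathcal{O}_i\rangle$ (for $\beta\in I_u(y,s)$) into an interval of length exactly $2L$ with $L = u(y,s)-E^y_\omega[u(X_{T^{(k)}},s+1+T^{(k)})]$, because $P^y_\omega(B^{(+)}_i)=P^y_\omega(B^{(-)}_i)=\tfrac12$. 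Then \cite[Claim 3.5]{Berger2014} (where the hypothesis \eqref{eq: cond} on $P^z_\omega(T>k)$ is used) shows $\|\mathcal{O}_i-e_i\|\le e^{-(\log R)^2}$, so the parallelepiped cut out by these constraints has volume $\approx (2L)^d$, which is where $4^d$ comes from. The essential point your argument misses is that the constraints must be derived from sites the walk actually visits; that is the whole reason the conditioned expected displacement $\mathcal{O}_i$, rather than the unit vector $e_i$, appears.
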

	Set 
	\[
	\Psi
	\triangleq \big\{ (y, s) \in \Gamma_u  \colon E^{y}_\omega \big[u(X_{T^{(k)}}, s + 1 + T^{(k)})\big] - u(y, s) > 0\big\}.
	\]
	Using \Cref{lem: 1-C1,lem: 2-C1} and the arithmetic-geometric mean inequality, we obtain for all \(R \geq R_o\)
	\begin{align*} \llambda(\Theta) &\leq \c \sum_{(y, s) \in \Psi}  (u(y, s) - u(y, s + 1)) \big(E^{y}_\omega \big[u(X_{T^{(k)}}, s + 1+T^{(k)})\big] - u(y, s) \big)^{d}
	\\&\leq \c\sum_{(y, s) \in \Psi}  \big(u(y, s) - u(y, s + 1) + d E^{y}_\omega \big[u(X_{T^{(k)}}, s + 1+ T^{(k)})\big] - du(y, s) \big)^{d + 1}\\
	&\leq \c \sum_{(y, s) \in \Psi}  \big(E^{y}_\omega \big[u(X_{T^{(k)}},s + 1+T^{(k)})\big] - u(y, s + 1) \big)^{d + 1}
	\\&\leq \c\sum_{(y, s) \in \Gamma_u}  \big|E^{y}_\omega \big[u(X_{T^{(k)}}, s + 1+T^{(k)})\big] - u(y, s + 1) \big|^{d + 1}.
	\end{align*}
	The claim of \Cref{theo: ABP} follows.
\qed
\\

It remains to prove the \Cref{lem: 1-C1,lem: 2-C1}.
\begin{proof}[Proof of \Cref{lem: 1-C1}] We borrow arguments from the proof of \cite[Theorem 2.2]{deuschel2018}.
	Set 
	\[
	\chi (y,  s) \triangleq \big\{ (p, q - \langle y, p\rangle) \colon p \in I_u (y, s) \text{ and } q \in [u(y, s + 1), u(y, s)]
	\big\} \subset \mathbb{R}^{d + 1}.
	\]
	The key observation is the following inclusion:
	\begin{align}\label{eq: incl-C1}
	\Theta \subseteq \chi(\Gamma_u) \triangleq \bigcup_{(y, s) \in \Gamma_u} \chi(y, s).
	\end{align}
	Let us accept \eqref{eq: incl-C1} for a moment. Then, using that the map \((y, z) \mapsto (y, z - \langle \beta, y\rangle)\) preserves volume, because it has determinant one, we obtain
	\begin{align*}
	\llambda (\Theta) \leq \llambda (\chi (\Gamma_u)) \leq \sum_{(y, s) \in \Gamma_u} (u(y, s) - u(y, s + 1)) \llambda (I_u(y,s )),
	\end{align*}
	which is the claim.
	
	It remains to prove \eqref{eq: incl-C1}.
	Let \((y, s) \in \Theta\) and define 
	\[
	\phi(x, t) \triangleq u(x, t) - \langle y, x\rangle - s, \quad (x, t) \in Q^k_R.
	\]
	Let \((y_0, s_0) \in Q_R\) be such that \(u(y_0, s_0) = M\). 
	Recalling the definition of \(\Theta\), we see that \(\phi(y_0, s_0) > 0\) and that
	\(\phi(x, t) < 0\) for all \((x, t) \in Q^k_R\) with \(u(x, t) \leq 0\).
	Let 
	\begin{align*}
	N_x &\triangleq \max  ( t \colon (x, t) \in Q^k_R \text{ and } \phi(x, t) \geq 0), \quad \max (\emptyset) \triangleq - \infty.
	\end{align*}
	Note that \(s_0 \leq N_{y_0}
	\leq s_1 \triangleq \max(N_x \colon x \in O_R \cup \partial^k O_R)= \max (N_x \colon x \in O_R) \leq \lfloor R^2 \rfloor - 1\). 
	Let \(y_1\) be such that \(s_1 = N_{y_1}\), and note that \((y_1, s_1) \in Q_R\).
	For all \((x, t) \in Q^k_R\) with \(t > s_1\) we have \(\phi (x, t) < 0\), which yields that 
	\(
	u(x, t) - \langle y, x\rangle < s\leq u(y_1, s_1) - \langle y, y_1\rangle,
	\)
	because \(\phi(y_1, s_1) \geq 0\).
	This implies that \(y\in I_u(y_1, s_1)\). 
	By definition of \(s_1\), we have 
	\(
	\phi(y_1, s_1 + 1) < 0, 
	\) and hence \(u(y_1, s_1 + 1) < \langle y, y_1\rangle + s\). We conclude that 
	\(
	u(y_1, s_1 + 1) < \langle y, y_1\rangle + s \leq u(y_1, s_1),
	\)
	which finally implies \((y, s) \in \chi (y_1, s_1)\) and thus \eqref{eq: incl-C1} holds.
\end{proof}

\begin{proof}[Proof of \Cref{lem: 2-C1}]
	We borrow the idea of the proof of \cite[Lemma 3.4]{Berger2014}. 
	Fix \((y, s) \in Q_R\). 
	For \(i = 1, \dots, d\) define
	\[
	u_i \triangleq \inf(n \in \N \colon \alpha(n) = i).
	\]
	Furthermore, we define the following events:
	\begin{align*}
	A^{(+)}_i &\triangleq \big\{X_{u_i} - X_{u_i - 1} = e_i, u_i \leq k\big\},\\
	A^{(-)}_i &\triangleq \big\{X_{u_i} - X_{u_i - 1} = - e_i, u_i \leq k\big\}.
	\end{align*}
	Let \(W\) be a random variable independent of the walk, which takes the values \(\pm 1\) with probability \(\frac{1}{2}\).
	Finally, define 
	\begin{align*}
	B^{(+)}_i &\triangleq A^{(+)}_i \cup \big(\{W = + 1\} \cap \{u_i > k\}\big),\\
	B^{(-)}_i &\triangleq A^{(-)}_i \cup \big(\{W = - 1\} \cap \{u_i > k\}\big).
	\end{align*}
	We note that \(B_i^{(+)}\) and \(B_i^{(+)}\) are disjoint and that the union is \(P_\omega^{y}\)-full. Thus, due to symmetry, we have 
	\[
	P_\omega^{y} (B^{(+)}_i) = P_\omega^{y} (B^{(-)}_i) = \tfrac{1}{2}.
	\]
	As \(\omega\in \mathsf{B}\), the walk \(X\) is a \(P_\omega^y\)-martingale and \(E^{y}_\omega[ X_{T^{(k)}}] = y\) follows from the optional stopping theorem.
	In summary, we obtain 
	\begin{align*}
	E_\omega^y \big[ X_{T^{(k)}} | B^{(+)}_i\big] &= 2 E_\omega^y \big[ X_{T^{(k)}} \1_{B^{(+)}_i}\big] 
	\\&= 2 \big(E_\omega^{y} \big[ X_{T^{(k)}}\big] - E_\omega^{y} \big[ X_{T^{(k)}} \1_{B^{(-)}_i}\big]\big)
	\\&= 2 y - E_\omega^{y} \big[ X_{T^{(k)}} |B^{(-)}_i\big].
	\end{align*}
	Hence, we have
	\[
	\mathcal{O}_i \triangleq E_\omega^{y} \big[ X_{T^{(k)}} | B^{(+)}_i\big] - y = y - E_\omega^{y} \big[ X_{T^{(k)}} |B^{(-)}_i\big].
	\]
	Take \(\beta \in I_u(y, s)\).
	Using the definition of \(I_u(y, s)\), we obtain
	\begin{align*}
	\langle \beta, \mathcal{O}_i \rangle &= \sum_{x \in O_R \cup \partial^k O_R} \langle \beta, x - y\rangle P_\omega^{y} (X_{T^{(k)}} = x | B^{(+)}_i)
	\\&= \sum_{(x, t) \in Q_R \cup \partial^k Q_R} \langle \beta, x - y\rangle P_\omega^{y} (X_{T^{(k)}} = x, T^{(k)} = t - s - 1 | B^{(+)}_i)
	\\&\geq \sum_{(x, t) \in Q_R \cup \partial^k Q_R} (u(x, t) - u(y, s)) P_\omega^{y} (X_{T^{(k)}} = x, T^{(k)} = t - s - 1 | B^{(+)}_i)
	\\&= E_\omega^{y} \big[ u(X_{T^{(k)}}, s + 1 + T^{(k)})| B^{(+)}_i\big] - u(y, s).
	\end{align*}
	Similarly, we see that 
	\begin{align*}
	\langle \beta, - \mathcal{O}_i \rangle 
	\geq E_\omega^{y} \big[ u(X_{T^{(k)}}, s + 1 + T^{(k)})| B^{(-)}_i\big] - u(y, s).
	\end{align*}
	Consequently, \(\langle \beta,\mathcal{O}_i\rangle\) lies in an interval which length is bounded by \(2 L\), where
	\begin{align*}
	L \triangleq u(y, s) - E^{y}_\omega \big[ u(X_{T^{(k)}}, s + 1+T^{(k)}) \big]. 
	\end{align*}
	We conclude that 
	\[
	\llambda(I_u(y, s)) \leq \llambda\big( \big\{ z \in \mathbb{R}^{d} \colon \forall_{i = 1, \dots, d} \ \  \big\langle z, \mathcal{O}_i\big\rangle \in [0, 2 L] \big\} \big).
	\]
	Note that
	\[
	\llambda \big( \big\{ z \in \mathbb{R}^{d} \colon \forall_{i = 1, \dots, d} \ \  \big\langle z, \mathcal{O}_i\big\rangle \in [0, 2 L] \big\} \big) = (2 L)^d |\textup{det } (M)|,
	\]
	where 
	\(
	M = \begin{pmatrix} \mathcal{O}_1&\cdots& \mathcal{O}_d\end{pmatrix}.
	\)
	Due to Hadamard's determinant inequality,
	we have 
	\[
	|\textup{det} (M)| \leq \prod_{i = 1}^{d} \big\| \mathcal{O}_i\big\|.
	\]
	For large enough \(R\), we deduce from \cite[Claim 3.5]{Berger2014} that
	\[
	\big\| e_i -\mathcal{O}_i\big\| < \exp \big(- (\log R)^2\big).
	\]
	Consequently, by the triangle inequality, we have 
	\[
	|\textup{det} (M)| \leq \big(1 + \exp \big( - (\log R)^2 \big)\big)^d.
	\]
	Now, the claim of the lemma follows.
\end{proof}

\bibliographystyle{plain}


\begin{thebibliography}{10}
	
	\bibitem{andres16}
	S.~Andres, J.-D. Deuschel, and M.~Slowik.
	\newblock {Harnack inequalities on weighted graphs and some applications to the
		random conductance model}.
	\newblock {\em Probability Theory and Related Fields}, 164:931--977, 2016.
	
	\bibitem{AS14}
	S.~N. Armstrong and C.~K. Smart.
	\newblock Regularity and stochastic homogenization of fully nonlinear equations
	without uniform ellipticity.
	\newblock {\em The Annals of Probability}, 42(6):2558--2594, 2014.
	
		\bibitem{bella20}
	P.~{Bella} and M.~{Sch{\"a}ffner}.
	\newblock {Non-uniformly parabolic equations and applications to the random conductance model}.
	\newblock {\em arXiv:2009.11535v1}, 2020.
	
	
	\bibitem{BBHK}
	N.~Berger, M.~Biskup,  C. E.~Hoffman and G.~Kozma.
	\newblock Anomalous heat-kernel decay for random walk among bounded random conductances.
	\newblock {\em Annales de lIHP Probabilit\`es et Statistiques}, 44(2):374--392, 2008.
	
	\bibitem{Berger18}
	N.~{Berger}, M.~{Cohen}, J.-D. {Deuschel}, and X.~{Guo}.
	\newblock {An elliptic Harnack inequality for random walk in balanced
		environments}.
	\newblock {\em arXiv:1807.03531v1}, 2018.
	
	\bibitem{Berger2014}
	N.~Berger and J.-D.~Deuschel.
	\newblock A quenched invariance principle for non-elliptic random walk in
	i.i.d. balanced random environment.
	\newblock {\em Probability Theory and Related Fields}, 158(1):91--126, 2014.
	
	\bibitem{10.2969/jmsj/06741413}
	O.~Boukhardra, T.~Kumagai, and P.~Mathieu.
	\newblock {Harnack inequalities and local central limit theorem for the
		polynomial lower tail random conductance model}.
	\newblock {\em Journal of the Mathematical Society of Japan}, 67(4):1413--1448, 2015.
	
	\bibitem{DG2019}
	J.-D. Deuschel and X.~Guo.
	\newblock Quenched local central limit theorem for random walks in a
	time-dependent balanced random environment.
	\newblock {\em arXiv:1710.05508v2}, 2019.
	
	\bibitem{deuschel2018}
	J.-D.~Deuschel, X.~Guo, and A.~Ram{\'i}rez.
	\newblock Quenched invariance principle for random walk in time-dependent
	balanced random environment.
	\newblock {\em Annales de l'Institut Henri Poincaré, Probabilités et
		Statistiques}, 54(1):363--384, 2018.
	
	\bibitem{Fabes1989}
	E.~B.~Fabes and D.~W.~Stroock.
	\newblock A new proof of Moser's parabolic Harnack inequality using the old
	ideas of Nash.
	\newblock {\em Archive for Rational Mechanics and Analysis}, 96(4):327--338, 1986.
	
	\bibitem{guozeitouni12}
	X.~Guo and O.~Zeitouni.
	\newblock Quenched invariance principle for random walks in balanced random
	environment.
	\newblock {\em Probability Theory and Related Fields}, 152(1):207--230, 2012.
	
	\bibitem{hambly2009}
	B.~M.~Hambly and M.~T.~Barlow.
	\newblock Parabolic Harnack inequality and local limit theorem for percolation
	clusters.
	\newblock {\em Electronic Journal of Probability}, 14(1):1--26, 2009.
	
	\bibitem{Krylov81}
	N.~V.~Krylov and M.~V.~Safonov.
	\newblock A certain property of solutions of parabolic equations with
	measurable coefficients.
	\newblock {\em Mathematics of the {USSR}-Izvestiya}, 16(1):151--164, 1981.
	
	\bibitem{10.1215/S0012-7094-98-09122-0}
	H.-J.~Kuo and N.~S.~Trudinger.
	\newblock {Evolving monotone difference operators on general space-time
		meshes}.
	\newblock {\em Duke Mathematical Journal}, 91(3):587--607, 1998.
	
	\bibitem{doi:10.1112/plms/s3-63.3.552}
	G.~F. Lawler.
	\newblock Estimates for differences and Harnack inequality for difference
	operators coming from random walks with symmetric, spatially inhomogeneous,
	increments.
	\newblock {\em Proceedings of the London Mathematical Society},
	s3-63(3):552--568, 1991.
	
	\bibitem{levin2009markov}
	D.~A. Levin, Y.~Peres, and E.~L.~Wilmer.
	\newblock {\em Markov Chains and Mixing Times}.
	\newblock American Mathematical Society, 2009.
	
	\bibitem{Lieberman96}
	G.~Lieberman.
	\newblock {\em Second Order Parabolic Differential Equations}.
	\newblock World Scientific, 1996.
	
	\bibitem{liggett1997}
	T.~M.~Liggett, R.~H.~Schonmann, and A.~M.~Stacey.
	\newblock Domination by product measures.
	\newblock {\em Annals of Probability}, 25(1):71--95, 01 1997.
	
	\bibitem{doi:10.1002/cpa.3160170106}
	J. Moser.
	\newblock {A Harnack inequality for parabolic differential equations}.
	\newblock {\em Communications on Pure and Applied Mathematics}, 17(1):101--134,
	1964.
	
	\bibitem{10.1214/009117905000000440}
	S.~Mustapha.
	\newblock {Gaussian estimates for spatially inhomogeneous random walks on \(Z^d\)}.
	\newblock {\em The Annals of Probability}, 34(1):264--283, 2006.
	
	\bibitem{Safarov80}
	M.~V.~Safonov.
	\newblock {Harnack's inequality for elliptic equations and the H{\"o}lder
		property of their solutions}.
	\newblock {\em Journal of Soviet Mathematics}, 21(5):851--863, 1983.
	
	\bibitem{Zeitouni2004}
	O.~Zeitouni.
	\newblock Part II: Random walks in random environment.
	\newblock In Jean Picard, editor, {\em Lectures on Probability Theory and
		Statistics: Ecole d'Et{\'e} de Probabilit{\'e}s de Saint-Flour XXXI - 2001},
	pages 190--312. Springer Berlin Heidelberg, 2004.
	
\end{thebibliography}

\end{document}